\numberwithin{equation}{section}
\numberwithin{figure}{section}
\theoremstyle{plain}
\newtheorem{theorem}{Theorem}[section]
\theoremstyle{plain}
\newtheorem*{theorem*}{Theorem}
\theoremstyle{plain}
\newtheorem{proposition}[theorem]{Proposition}
\theoremstyle{plain}
\newtheorem{lemma}[theorem]{Lemma}
\theoremstyle{plain}
\newtheorem{corollary}[theorem]{Corollary}
\theoremstyle{definition}
\newtheorem{definition}[theorem]{Definition}
\theoremstyle{definition}
\theoremstyle{definition}
\theoremstyle{remark}
\newtheorem{remark}[theorem]{Remark}
\newcommand{\Uqg}{U_q(\mathfrak{g})}
\newcommand{\Uql}{U_q(\mathfrak{l})}
\newcommand{\Uqlss}{U_q(\mathfrak{l}_{ss})}
\newcommand{\Uqk}{U_q(\mathfrak{k})}
\newcommand{\Cqg}{\mathbb{C}_q[G]}
\newcommand{\lieup}{\mathfrak{u}_+}
\newcommand{\lieum}{\mathfrak{u}_-}
\newcommand{\Sq}{S_q(\mathfrak{u}_+)}
\newcommand{\Lq}{\Lambda_q(\mathfrak{u}_-)}
\newcommand{\tschubert}{U^\prime(w_\mathfrak{l})}
\newcommand{\Rmat}{\mathcal{R}}
\newcommand{\Cas}{\mathcal{C}}
\newcommand{\canonical}{\mathcal{I}}
\newcommand{\id}{\mathrm{id}}
\newcommand{\embU}{\phi_U}
\newcommand{\ad}{\mathrm{ad}}
\newcommand{\adT}{\widetilde{\mathrm{ad}}}
\newcommand{\qdiff}{Q}
\newcommand{\dolb}{\eth}
\newcommand{\spinbun}{\Gamma(\Omega)}
\newcommand{\algspace}{\mathcal{M}}
\newcommand{\LagGr}{\mathbb{C}_q[Sp(2) / U(2)]}
\begin{document}

\title[Parthasarathy formula for the quantum Lagrangian Grassmannian]{The Parthasarathy formula and a spectral triple for the quantum Lagrangian Grassmannian of rank two}

\author{Marco Matassa}

\address{OsloMet – storbyuniversitetet}

\email{marco.matassa@oslomet.no}

\begin{abstract}
We show that the Dolbeault--Dirac operator on the quantum Lagrangian Grassmannian of rank two, an example of a quantum irreducible flag manifold, satisfies an appropriate version of the Parthasarathy formula. We use this result to complete the proof that the candidate spectral triple for this space, as defined by Krähmer, is a spectral triple.
\end{abstract}

\maketitle

\section*{Introduction}

Let $G / K$ be a symmetric space. The \emph{Parthasarathy formula} relates the square of a Dirac-type operator $D$, defined on a certain bundle over $G / K$, with the quadratic Casimir of the Lie algebra $\mathfrak{g}$. See \cite{parthasarathy} for the original derivation and \cite{agricola} for a readable account.

It is well known that compact Lie groups admit \emph{$q$-deformations}, that is non-commutative algebras $\Cqg$ which are deformations of the classical rings and enjoy many similar properties. Many homogeneous spaces $G / K$ can be also quantized within this setting. For instance the case of \emph{generalized flag manifolds} is treated in \cite{stokman-flag}.
When these flag manifolds are irreducible, they can also be described as symmetric spaces, and coincide with the class of Hermitian symmetric spaces.
This class of quantum homogeneous spaces is particularly well-behaved: for instance they admit a canonical $q$-analogue of the de Rham complex, as shown by Heckenberger and Kolb in \cite{HeKo04, HeKo06}, which has the same graded dimension as in the classical case. We stress that this is not the case for general quantum spaces.

Dirac operators on quantum irreducible flag manifolds have been defined in \cite{dirac-flag} and revisited in \cite{KrTS15}.
A natural question is whether these operators admit a quantum analogue of the Parthasarathy formula.
One important application of such a formula would be to obtain \emph{spectral triples} corresponding to these quantum spaces. Recall that a spectral triple is the main ingredient in the framework of non-commutative geometry developed by Connes \cite{connes}. The Dirac operators defined in \cite{dirac-flag} can be used to construct candidate spectral triples, but in general it is not known if they have \emph{compact resolvent}, which is one of the key conditions to be satisfied.
This is known to be the case for quantum projective spaces, as originally shown in \cite{dd-proj} and then revisited in \cite{mat-proj} within the setting of \cite{KrTS15}. The strategy in these papers is to show that $D^2$ satisfies a formula of Parthasarathy-type. Hence having such a formula would allow to prove the result in full generality.

In this paper we give the first example of a quantum symmetric space of \emph{rank two} for which an appropriate version of the Parthasarathy formula holds.
The homogeneous space we consider here is an example of a \emph{Lagrangian Grassmannian}.
A (complex) Lagrangian Grassmannian is the smooth manifold of Lagrangian subspaces of a complex symplectic vector space of dimension $2 n$. As a homogeneous space it can be identified with $Sp(n) / U(n)$, where $Sp(n)$ is the compact real form of the complex Lie group $Sp(2n, \mathbb{C})$. The manifold $Sp(n) / U(n)$ has complex dimension $n (n + 1)/2$ and rank $n$, see \cite{helgason}. Hence we are considering the case $n = 2$, with corresponding quantum coordinate ring $\LagGr$.

We will consider a Dolbeault--Dirac $D$ element which, up to minor modifications, is the one introduced in \cite{KrTS15}. This element naturally acts on a certain space $\spinbun$, which classically corresponds to the bundle of anti-holomorphic forms on $Sp(2) / U(2)$. The goal is to compare this element with $\Cas \otimes 1$, where $\Cas$ is an appropriately defined central element of $U_q(\mathfrak{sp}_4)$, which plays the role of the quadratic Casimir. Our main result is the following.

\begin{theorem*}[\cref{thm:D-parthasarathy}]
The element $D^2$ coincides with $\Cas \otimes 1$ as operators on the space $\spinbun$, up to terms in the quantized Levi factor $\Uql$.
\end{theorem*}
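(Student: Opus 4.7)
The plan is to follow the Krähmer--Tucker-Simmons approach and write $D = \dolb + \dolb^*$, where $\dolb$ is a quantum Dolbeault-type operator raising the form degree on $\Lq$ and $\dolb^*$ is its formal adjoint lowering the degree. Since $\dolb^2 = 0 = (\dolb^*)^2$, one has $D^2 = \{\dolb, \dolb^*\}$. Writing $\dolb = \sum_i F_i \otimes \gamma^i$ and $\dolb^* = \sum_i E_i \otimes \gamma^{*i}$, with $E_i, F_i$ quantum root vectors of $\lieup$ and $\lieum$ and $\gamma^i, \gamma^{*i}$ Clifford generators of $\Cliff$, the anti-commutator expands as a double sum in which the index pairs $(i,j)$ run over positive roots of $\lieup$.

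On the algebra side, I would use a quantum analogue of the Harish-Chandra triangular decomposition to write $\Cas = \Cas_\mathfrak{l} + \sum_\alpha c_\alpha E_\alpha F_\alpha$, where the sum ranges over the three positive roots of $\lieup$ for $Sp(2)/U(2)$ (matching the complex dimension), the $c_\alpha$ are suitable $q$-powers, and $\Cas_\mathfrak{l} \in \Uql$. The computation then reduces to comparing the two expansions: the terms of $\{\dolb, \dolb^*\}$ for which $i = j$ (the same root appearing on both sides of the Clifford product) should, via the Clifford relations of $\Cliff$, combine into $\sum_\alpha c_\alpha E_\alpha F_\alpha$ modulo Levi contributions, while the cross terms $i \ne j$, controlled by the braided anti-commutation of $\Lq$ through its $\Rmat$-matrix, must cancel or collapse into $\Uql \otimes 1$.

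The rank-two feature of $Sp(2)/U(2)$ is where the main obstacle lies. In the projective (rank one) cases there is essentially a single root in $\lieup$ and cross terms are absent; here the three positive roots produce genuine off-diagonal contributions $E_i F_j$ with $i \ne j$, which do not vanish automatically. The braiding of $\Lq$ inserts $q$-dependent corrections that deform the classical Jacobi-type identities responsible for cancellation. The crux will be to organize these cross terms, presumably by fixing a convenient basis of quantum root vectors adapted to the Levi decomposition and exploiting the specific commutation relations in $\Uqg$ for $\mathfrak{g} = \mathfrak{sp}_4$, so that all residues not already in $\Uql$ vanish when evaluated on $\spinbun$.
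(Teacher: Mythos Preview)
Your outline captures the classical skeleton, but it misses the two mechanisms that actually make the quantum case work, and one of your expectations is in fact false.

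First, the form $\Cas = \Cas_{\mathfrak{l}} + \sum_\alpha c_\alpha E_\alpha F_\alpha$ is too naive in the quantum setting. The central element obtained from the R-matrix for $U_q(\mathfrak{sp}_4)$ has, beyond the diagonal terms $E_{\xi_i}^* E_{\xi_i}$, genuinely quantum pieces such as $E_{\beta_1}^* E_{\beta_3}^* E_{\beta_2}$ and $E_{\beta_1}^* E_{\beta_4}^* E_{\beta_4} E_{\beta_1}$ (products of three and four root vectors). These do not fit a two-term Harish-Chandra ansatz. The paper handles them by systematically commuting the Levi root vector $E_{\beta_1} = E_1$ (and its adjoint) to the right, so that $\Cas$ is rewritten as a sum of terms $E_{\xi_i}^* E_{\xi_j} \cdot (\text{element of } \Uql)$.

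Second, and this is the crux, the off-diagonal terms $E_{\xi_i}^* E_{\xi_j}$ with $i \neq j$ do \emph{not} cancel or collapse into $\Uql \otimes 1$. The author has shown elsewhere that in $\Uqg \otimes \mathrm{End}(\Lq)$ the identity $D^2 \sim \Cas \otimes 1$ is simply false. What saves the day is the defining relation of $\spinbun$: for $Y \in \Uql$ one has $(XY \otimes T)\xi = (X \otimes T \circ S(Y))\xi$. Using this, the Levi tails in $\Cas$ are pushed into the Clifford factor, so that $\Cas \otimes 1$ itself acquires nonzero off-diagonal terms such as $E_{\xi_1}^* E_{\xi_2} \otimes K_{2\lambda_1} S(E_1)$. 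On the $D^2$ side, the corresponding Clifford elements $\Gamma_{12}$ and $\Gamma_{23}$ are likewise nonzero, and an explicit computation shows they equal precisely these Levi operators on $\Lq$. So the matching is term-by-term, not by cancellation.

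Finally, there is a hidden choice you do not mention: the Hermitian inner product on $\Lq$ has free parameters (one per degree), and only a specific choice forces $\Gamma_{13} = \gamma(y_1)^* \gamma(y_3) + \gamma(y_3)\gamma(y_1)^* = 0$, which is necessary because $\Cas$ has no $E_{\xi_1}^* E_{\xi_3}$ term. Without fixing this, the comparison cannot succeed.
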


We then use this result to show that $D$ has compact resolvent.
This provides the first complete construction of a spectral triple for a quantum symmetric space of rank two.\footnote{I have been told by Réamonn Ó Buachalla that he and collaborators have obtained similar results for some low-dimensional spaces by different means, but the details are not available yet.}

\begin{theorem*}[\cref{thm:spectral-triple}]
The Dolbeault--Dirac operator $D$ has compact resolvent. Hence we get a spectral triple for the quantum Lagrangian Grassmannian $\LagGr$.
\end{theorem*}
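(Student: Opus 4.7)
The plan is to deduce compact resolvent from the Parthasarathy-type identity of \cref{thm:D-parthasarathy}, which writes $D^2 = \Cas \otimes 1 + X$ with $X$ a finite sum of terms in the quantized Levi factor $\Uql$ acting on $\spinbun$. First I would invoke the Peter--Weyl-type decomposition of $\spinbun$ as a $\Uqg$-module. Since classically $\spinbun$ is the space of sections of an equivariant bundle induced from a finite-dimensional $\Uql$-representation (here built from $\Lq$), in the quantum setting one obtains an isotypic decomposition
\[
\spinbun \cong \bigoplus_{\lambda} V_\lambda \otimes M_\lambda,
\]
where $V_\lambda$ ranges over the integrable irreducibles of $\Uqg$ appearing in the coordinate ring and $M_\lambda$ is a finite-dimensional multiplicity space carrying a residual $\Uql$-action inherited from the fibre $\Lq$.

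Next I would analyse how $D^2$ acts on this decomposition. The element $\Cas$ is central in $\Uqg$, so it acts on $V_\lambda \otimes M_\lambda$ by a scalar $c(\lambda)$, which is the standard $q$-deformation of $\langle \lambda, \lambda + 2 \rho \rangle$. The remainder $X$ lies in $\Uql$, preserves the decomposition, and acts only through its action on the multiplicity space $M_\lambda$, which in turn is built from the fixed finite-dimensional $\Uql$-module $\Lq$. Consequently the operator norm of $X$ restricted to each isotypic component is bounded by a constant depending only on the finite set of $\Uql$-types appearing in $\Lq$, and is in particular uniform in $\lambda$. Thus $D^2$ is block-diagonal with eigenvalues of the form $c(\lambda) + O(1)$.

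Finally I would combine the fact that $c(\lambda)$ grows quadratically in $|\lambda|$ with the at-most-polynomial growth of $\dim(V_\lambda \otimes M_\lambda)$ in $|\lambda|$ to conclude that the eigenvalues of $D^2 + 1$ accumulate only at infinity and with finite multiplicity, hence $(D + i)^{-1}$ is compact. Together with the boundedness of commutators $[D,a]$ for $a \in \LagGr$ already established in the construction of the candidate spectral triple, this yields the spectral triple. The main obstacle is controlling the remainder $X$ carefully: even though \cref{thm:D-parthasarathy} asserts $X \in \Uql$, one must verify that its action on $\Lq$, and hence on each $M_\lambda$, is genuinely bounded by a $\lambda$-independent constant and does not create cancellations with $c(\lambda)$ for infinitely many $\lambda$. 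This reduces to an explicit but finite computation of how the relevant Levi elements act on the finitely many isotypic $\Uql$-summands of $\Lq$, together with a comparison against the asymptotics of $c(\lambda)$.
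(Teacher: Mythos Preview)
Your approach is essentially the same as the paper's: reduce to $\Cas\otimes 1$ via the Parthasarathy identity, argue that the Levi remainder is a bounded perturbation, and then use the Peter--Weyl decomposition together with the divergence of the Casimir eigenvalues $c_\Lambda$.

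Two points are worth tightening. First, the eigenvalues $c_\Lambda$ do not grow quadratically here: by \cref{cor:value-casimir} one has $c_\Lambda = \sum_j q^{-2(\lambda_j,\Lambda+\rho)}/(q-q^{-1})^2$, which for $0<q<1$ grows \emph{exponentially} in $\Lambda$. This only strengthens your conclusion, but the stated quadratic growth is a classical intuition that is not correct in the quantum setting. Second, your boundedness argument for the remainder can be made entirely direct, avoiding any discussion of multiplicity spaces or uniformity in $\lambda$: by the very definition of $\spinbun$ (see \eqref{eq:spin-bundle}), for $Y\in\Uql$ and $T\in\mathrm{End}(\Lq)$ one has $(Y\otimes T)\xi = (1\otimes T\,S(Y))\xi$ on $\spinbun$, so every term of the remainder acts through a fixed endomorphism of the finite-dimensional space $\Lq$ and is therefore bounded by its operator norm there. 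This is exactly how the paper dispatches the issue in one line, and it removes the ``main obstacle'' you flag at the end.
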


We close this introduction by comparing these results with those of \cite{mat-par}. In the cited paper we state a negative result concerning the Parthasarathy formula for $\LagGr$, the quantum homogeneous space being investigated here.
This apparent contradiction is resolved by the observation that we are looking at two slightly \emph{different versions} of the Parthasarathy formula.
In the paper \cite{mat-par} we consider the Dolbeault--Dirac operator from a purely algebraic point of view, that is as an element of $\Uqg \otimes \mathrm{End}(\Lq)$.
Then we look for an identity of the form $D^2 \sim C \otimes 1$, where $C$ is a central element and $\sim$ means neglecting terms in the Levi factor, and show that such an identity can not be satisfied. While this result is technically correct, it does not tell the full story: namely we want to consider $D$ as acting on an appropriate space, denoted by $\spinbun$ here.
Taking this into account we get more relations, which algebraically translate into taking a certain quotient of $\Uqg \otimes \mathrm{End}(\Lq)$.
The main result of the current paper is that the relation $D^2 \sim C \otimes 1$ does hold in this quotient, which we will denote by $\algspace$, and consequently for $D^2$ acting on the vector space $\spinbun$.

The paper is organized as follows. In \cref{sec:preliminaries} we discuss some preliminary material.
In \cref{sec:dirac-ops} we recall the construction of Dolbeault--Dirac operators on irreducible flag manifolds, following the paper \cite{KrTS15} but with some modifications.
In \cref{sec:lagrangian} we provide various details regarding the quantum Lagrangian Grassmannian under consideration.
In \cref{sec:algebras} we derive the relations for the relevant quantum symmetric and exterior algebras.
In \cref{sec:square} we obtain an explicit expression for the square of the Dolbeault--Dirac operator.
In \cref{sec:casimir} we derive a central element playing the role of the quadratic Casimir from the R-matrix.
In \cref{sec:rewriting} we rewrite this element in a different manner, to facilitate comparison with the Dolbeault--Dirac operator.
In \cref{sec:more-exterior} we provide more useful formulae regarding the exterior algebra.
In \cref{sec:parthasarathy} we prove our main result, namely a Parthasarathy formula for the Dolbeault--Dirac operator considered here.
Finally, in \cref{sec:spectral-triple} we use this result to construct a spectral triple on the Lagrangian Grassmannian.

\vspace{3mm}

{\footnotesize
\emph{Acknowledgements}.
I would like to thank Réamonn Ó Buachalla and Robert Yuncken for many interesting discussions regarding the topics of this paper.
}

\section{Preliminaries}
\label{sec:preliminaries}

In this section we review some preliminary material, while also fixing some notation regarding Lie algebras, parabolic subalgebras and quantized enveloping algebras.

\subsection{Lie algebras and parabolic subalgebras}

Let $\mathfrak{g}$ be a finite-dimensional complex simple Lie algebra with a fixed Cartan subalgebra $\mathfrak{h}$.
We denote by $\Delta(\mathfrak{g})$ the root system, by $\Delta^{+}(\mathfrak{g})$ a choice of positive roots and by $\Pi = \{ \alpha_{1}, \cdots, \alpha_{r} \}$ the corresponding simple roots. We denote by $P$ the weight lattice.
Finally we denote by $(\cdot, \cdot)$ the symmetric bilinear form on $\mathfrak{h}^{*}$ induced by the Killing form, normalized as to have $(\alpha, \alpha) = 2$ for short roots.

Next we review parabolic subalgebras of $\mathfrak{g}$, following the presentation given in \cite[Section 2.2]{KrTS15}.
Let $S \subset \Pi$ be a subset of the simple roots and define
\[
\Delta(\mathfrak{l}) := \mathrm{span}(S) \cap \Delta(\mathfrak{g}), \quad
\Delta(\mathfrak{u}_{+}) := \Delta^{+}(\mathfrak{g}) \backslash \Delta^{+}(\mathfrak{l}).
\]
In terms of these root spaces we define the subspaces
\[
\mathfrak{l} := \mathfrak{h} \oplus \bigoplus_{\alpha \in \Delta(\mathfrak{l})} \mathfrak{g}_{\alpha}, \quad
\mathfrak{u}_{\pm} := \bigoplus_{\alpha \in \Delta(\mathfrak{u}_{+})} \mathfrak{g}_{\pm \alpha}, \quad
\mathfrak{p} := \mathfrak{l} \oplus \mathfrak{u}_{+}.
\]
It follows from the definitions that $\mathfrak{l}$ and $\mathfrak{u}_{\pm}$ are Lie subalgebras of $\mathfrak{g}$.
We call $\mathfrak{p}$ the \emph{standard parabolic subalgebra} associated to $S$ (we will omit the dependence on $S$ in the following). The subalgebra $\mathfrak{l}$ is reductive and is called the \emph{Levi factor} of $\mathfrak{p}$, while $\mathfrak{u}_{+}$ is a nilpotent ideal of $\mathfrak{p}$ called the \emph{nilradical}.
We will denote by $\mathfrak{l}_{ss}$ the semisimple part of the Levi factor.
Also we will refer to the roots of $\Delta(\mathfrak{u}_{+})$ as the \emph{radical roots}.

We will consider \emph{cominuscule} parabolics: these have the property that all radical roots contain a certain simple root $\alpha_t$ with multiplicity $1$.
For these parabolic subalgebras we have the commutation relations $[\mathfrak{u}_{+}, \mathfrak{u}_{-}] \subset \mathfrak{l}$.
This follows from the general commutation relations $[E_\alpha, E_\beta] = c_{\alpha, \beta} E_{\alpha + \beta}$, together with the fact that $\alpha_t$ appears with multiplicity $1$.

The adjoint action of $\mathfrak{p}$ on $\mathfrak{g}$ descends to an action on $\mathfrak{g} / \mathfrak{p}$. The decomposition $\mathfrak{g} = \mathfrak{u}_{-} \oplus \mathfrak{p}$ gives $\mathfrak{g} / \mathfrak{p} \cong \mathfrak{u}_{-}$ as $\mathfrak{l}$-modules.
With respect to the Killing form of $\mathfrak{g}$, both $\mathfrak{u}_{+}$ and $\mathfrak{u}_{-}$ are isotropic and we have $\mathfrak{l} = \mathfrak{u}^{\perp}$, where $\mathfrak{u} = \mathfrak{u}_{+} \oplus \mathfrak{u}_{-}$.
The pairing $\mathfrak{u}_{-} \otimes \mathfrak{u}_{+} \to \mathbb{C}$ coming from the Killing form is non-degenerate, so that $\mathfrak{u}_{-}$ and $\mathfrak{u}_{+}$ are dual as $\mathfrak{l}$-modules.

\subsection{Quantum algebras}

Let $\mathfrak{g}$ be a complex Lie algebra. Let $0 < q < 1$ and write $q_i := q^{(\alpha_i, \alpha_i) / 2}$. We follow the conventions of \cite[Chapter 4]{jantzen}.
The \emph{quantized enveloping algebra} $\Uqg$ is the algebra with generators $K_i$, $E_i$ and $F_i$ and with relations
\[
K_i E_j = q^{(\alpha_i, \alpha_j)} E_j K_i, \quad
K_i F_j = q^{-(\alpha_i, \alpha_j)} F_j K_i, \quad
[E_i, F_j] = \delta_{i j} \frac{K_i - K_i^{-1}}{q_i - q_i^{-1}},
\]
plus the quantum analogue of the Serre relations. We will also use the "simply-connected" version, denoted by the same symbol, where we allow the Cartan elements $K_\lambda$ with $\lambda \in P$ and similar relations. We have the coproduct $\Delta : \Uqg \to \Uqg \otimes \Uqg$ given by
\[
\Delta(K_\lambda) = K_\lambda \otimes K_\lambda, \quad
\Delta(E_i) = E_i \otimes 1 + K_i \otimes E_i, \quad
\Delta(F_i) = F_i \otimes K_i^{-1} + 1 \otimes F_i.
\]
The corresponding antipode $S: \Uqg \to \Uqg$ is given by
\[
S(K_\lambda) = K_\lambda^{-1}, \quad
S(E_i) = - K_i^{-1} E_i, \quad
S(F_i) = - F_i K_i.
\]
Finally we will use the $*$-structure given by
\[
K_\lambda^* = K_\lambda, \quad
E_i^* = F_i K_i, \quad
F_i^* = K_i^{-1} E_i.
\]
We will also use the standard definitions of $q$-numbers and $q$-factorials
\[
[n]_q = \frac{q^n - q^{-n}}{q - q^{-1}}, \quad
[n]_q! = [n]_q \cdots [1]_q.
\]

Next consider a Levi subalgebra $\mathfrak{l} \subset \mathfrak{g}$, corresponding to a subset of simple roots $S \subset \Pi$.
This can be quantized straightforwardly, as in \cite[Section 4]{stokman-flag}. We define
\[
\Uql := \left\langle K_\lambda, E_i, F_i : \lambda \in P, \ i \in S \right\rangle \subset \Uqg,
\]
where $\langle \cdot \rangle$ denotes the algebra generated by these elements. It is easy to see that this is a Hopf $*$-subalgebra of $\Uqg$.
The subalgebra $\Uqlss \subset \Uqg$ is defined in a similar way.

We will also consider the \emph{quantum coordinate rings} $\Cqg$, with $G$ the complex Lie group integrating $\mathfrak{g}$. These are defined as the restricted duals of the quantized enveloping algebras $\Uqg$. By duality they inherit maps making them into Hopf $*$-algebras.
We will be very brief about these algebras, as they will not play a main role in the following. Let us mention that we have a canonical $\Uqg$-bimodule structure, given by
\[
(X a) (Y) := a(Y X), \quad (a X) (Y) := a(X Y), \quad a \in \Cqg, \ X, Y \in \Uqg.
\]
Next we consider quantum rings of homogeneous spaces. Suppose we have a Hopf $*$-subalgebra $\Uqk \subset \Uqg$ (more generally a coideal), which we interpret as the quantization of the Lie algebra $\mathfrak{k}$ of a subgroup $K \subset G$. Then we will write
\[
\mathbb{C}_q [G / K] := \{ a \in \Cqg : X a = \varepsilon(X) a, \ \forall X \in \Uqk \}.
\]

Finally, to shorten certain formulae we will use the notation
\[
Q := q - q^{-1}.
\]

\subsection{Quantum root vectors and commutation relations}

We now recall the notion of quantum root vectors, which are the analogue of root vectors inside the quantized enveloping algebra $\Uqg$. These can be defined in terms of certain automorphisms $T_i$, which have been introduced by Lusztig. We will follow the conventions of \cite[Section 8.14]{jantzen}.

Let $w_0$ be the longest word of the Weyl group of $\mathfrak{g}$ and let $w_0 = s_{i_1} \cdots s_{i_n}$ be a reduced decomposition. Then it is well-known that the positive roots $\Delta^+(\mathfrak{g})$ can be obtained as
\[
\beta_j := s_{i_1} \cdots s_{i_{j - 1}} (\alpha_{i_j}), \quad j = 1, \cdots, n.
\]
With this notation in place, we define the positive and negative quantum root vectors as
\[
E_{\beta_j} := T_{i_1} \cdots T_{i_{j - 1}} (E_{i_j}), \quad
F_{\beta_j} := T_{i_1} \cdots T_{i_{j - 1}} (F_{i_j}), \quad j = 1, \cdots, n.
\]
It is important to notice this definition depends on the choice of the decomposition for $w_0$ (in the classical case the dependence is only through signs). A PBW basis of $\Uqg$ is obtained from these elements and the Cartan elements $K_\lambda$, similarly to the classical case.

We now recall a general result regarding the commutation relations in $\Uqg$, see for instance \cite[Section I.6.10]{BrGo02} and references therein.
With notation as above, for $j < k$ we have
\begin{equation}
\label{eq:comm-rel-uqg}
E_{\beta_j} E_{\beta_k} - q^{(\beta_j, \beta_k)} E_{\beta_k} E_{\beta_j} = \sum_{a_{j + 1}, \cdots, a_{k - 1}} c_{a_{j + 1}, \cdots, a_{k - 1}} E_{\beta_{j + 1}}^{a_{j + 1}} \cdots E_{\beta_{k - 1}}^{a_{k - 1}}.
\end{equation}
Observe that the weight of the elements on both sides has to match. Hence on the right-hand side we can only have elements of total weight $\beta_j + \beta_k$.

\section{Dolbeault--Dirac operators}
\label{sec:dirac-ops}

In this section we will introduce Dolbeault--Dirac operators on quantum irreducible flag manifolds. Our definition essentially follows \cite{KrTS15}, with minor modifications.

\subsection{Symmetric and exterior algebras}

Let $V$ be a $\Uqg$-module. Then we have a corresponding \emph{quantum symmetric algebra} $S_q(V)$ and \emph{quantum exterior algebra} $\Lambda_q(V)$, as defined in \cite{BrZw08}. They are quadratic algebras and $\Uqg$-module algebras. These symmetric and exterior algebras are related by \emph{quadratic duality}: we have $S_q(V)^! \cong \Lambda_q(V^*)$, where $V^*$ is the dual of $V$ and $A^!$ denotes the quadratic dual (or Koszul dual) of the quadratic algebra $A$, see \cite[Proposition 2.11]{BrZw08}. We refer to the cited paper for other properties.

In general the algebras $S_q(V)$ and $\Lambda_q(V)$ do not have the same graded dimensions as their classical counterparts. When the dimensions coincide we speak of \emph{flat deformations}.
It is known that we have flat deformations for $V = \lieup$, an abelian nilradical corresponding to a cominuscule parabolic subalgebra $\mathfrak{p}$. This was shown in \cite{Zwi09}, see also \cite[Proposition 3.2]{KrTS15}. We summarize the two main properties holding in this case in the following theorem.

\begin{theorem}
Let $\lieup$ be an abelian nilradical. Then:
\begin{enumerate}
\item $\Sq$ is a flat deformation of $S(\lieup)$,
\item $\Sq$ is a Koszul algebra.
\end{enumerate}
\end{theorem}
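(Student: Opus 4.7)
The proof splits naturally into the flatness claim (1) and the Koszulity claim (2), with the bulk of the work concentrated in (1). For flatness I would follow the R-matrix approach of Zwicknagl. The cominuscule hypothesis says that the distinguished simple root $\alpha_t$ appears with multiplicity one in every radical root; in particular any sum $\alpha + \beta$ with $\alpha, \beta \in \Delta(\lieup)$ has $\alpha_t$-height exactly two, a rigidity that forces the $\Uqg$-module $\lieup \otimes \lieup$ to split, via the braided flip $\check{\Rmat}$, into precisely two isotypic summands $I_+$ and $I_-$ that deform the classical $S^2 \lieup$ and $\Lambda^2 \lieup$. One then sets $\Sq := T(\lieup)/(I_-)$, and (1) becomes the assertion that this quadratic algebra has the classical Hilbert series.

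To establish this, I would produce a PBW basis. Fix a reduced decomposition of $w_0$ listing the radical roots $\beta_1, \ldots, \beta_m$ last in the Lusztig order, and consider the quantum root vectors $E_{\beta_1}, \ldots, E_{\beta_m}$. Using \eqref{eq:comm-rel-uqg} together with the classical abelianness of $\lieup$, the correction terms on the right-hand side are forced by weight considerations to lie in the span of single quadratic monomials $E_{\beta_i} E_{\beta_j}$, so that ordered monomials $E_{\beta_1}^{a_1} \cdots E_{\beta_m}^{a_m}$ span $\Sq$. Their linear independence, the heart of the matter, is obtained by a specialization argument $q \to 1$ comparing with the classical PBW basis of $S(\lieup)$; this is exactly Zwicknagl's theorem \cite{Zwi09}.

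For (2), Koszulity is a standard consequence of (1). The classical algebra $S(\lieup)$ is a polynomial ring and hence Koszul, and the PBW basis produced in (1) places $\Sq$ in the setting of Priddy's criterion: a quadratic algebra admitting such a PBW basis is automatically Koszul. Alternatively, one may invoke the general principle that a flat quadratic deformation of a Koszul algebra is again Koszul. The main obstacle is the linear independence of PBW monomials in (1): flatness can fail if the quadratic relations generate unexpected higher-degree relations, and ruling this out is precisely what the cominuscule abelian hypothesis accomplishes, as carried out in \cite{Zwi09}.
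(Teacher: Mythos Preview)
Your proposal is correct and follows precisely the approach the paper invokes: the paper does not prove this theorem itself but simply cites \cite{Zwi09} (and \cite[Proposition 3.2]{KrTS15}) for both flatness and Koszulity, and your sketch is an accurate outline of Zwicknagl's argument via quantum root vectors, PBW monomials, and specialization. If anything, you have supplied more detail than the paper does.
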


Let $\lieum$ be the $\Uql$-module dual to $\lieup$ (see below). Then it follows from general duality that also $\Lq$ is a flat deformation and a Koszul algebra, see \cite[Corollary 3.4]{KrTS15}.

\subsection{Canonical element}

Let $\{ x_i \}_i \subset \lieup$ and $\{ y_i \}_i \subset \lieum$ be dual bases, where we consider $\lieum \cong \lieup^*$ as a $\Uql$-module via $(X y)(x) := y(S^{-1}(X) x)$. Equivalently, the evaluation map $y(x) = \langle x, y \rangle$ gives a $\Uql$-invariant pairing $\langle \cdot, \cdot \rangle : \lieup \otimes \lieum \to \mathbb{C}$, in the sense that
\[
\langle X_{(1)} \triangleright x, X_{(2)} \triangleright y \rangle = \varepsilon(X) \langle x, y \rangle, \quad \forall X \in \Uql.
\]
With this notation, we can consider the \emph{canonical element}
\[
\canonical := \sum_i x_i \otimes y_i \in \lieup \otimes \lieum.
\]
The element $\canonical$ does not depend on the choice of bases. Moreover it is $\Uql$-invariant, in the sense that $(X_{(2)} \otimes X_{(1)}) \canonical = \varepsilon(X) \canonical$. This condition is equivalent to
\begin{equation}
\label{eq:condition-I}
(X \otimes 1) \canonical = (1 \otimes S(X)) \canonical, \quad \forall X \in \Uql.
\end{equation}

Before proceeding, let us make an important remark on the quadratic duality giving $\Sq^! \cong \Lq$.
Since $\Sq$ is a $\Uql$-module algebra, we also want $\Lq$ to be a $\Uql$-module algebra.
Then, as pointed out in \cite[Remark 3.3]{KrTS15}, we want define the orthogonal complement of the relations of $\Sq$ by the pairing
\[
\langle x \otimes x^\prime, y \otimes y^\prime \rangle = \langle x, y^\prime \rangle \langle x^\prime, y \rangle, \quad
x, x^\prime \in \lieup, \ y, y^\prime \in \lieum.
\]
On the other hand, in the book \cite[Chapter 1, Section 2]{quadratic-algebras} quadratic duality is defined with respect to the pairing $\langle x \otimes x^\prime, y \otimes y^\prime \rangle = \langle x, y \rangle \langle x^\prime, y^\prime \rangle$. Hence the quadratic dual we use here coincides with the \emph{opposite algebra} of the one defined in the cited book.

Having dispensed with these details, we can consider the canonical element $\canonical$ as an element of $\Sq \otimes \Lq^\mathrm{op}$, where $\mathrm{op}$ denotes the opposite algebra.

\begin{lemma}
\label{lem:canonical-square}
Let $\canonical \in \Sq \otimes \Lq^\mathrm{op}$. Then we have $\canonical^2 = 0$.
\end{lemma}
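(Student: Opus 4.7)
Expanding in the opposite algebra convention, one has
\[
\canonical^2 = \sum_{i,j} (x_i \otimes y_i)(x_j \otimes y_j) = \sum_{i,j} x_i x_j \otimes y_j y_i,
\]
with the product on the right factor computed in $\Lq$ (the reversal of $y_i$ and $y_j$ is exactly the effect of $\mathrm{op}$). The first move is therefore to lift to the tensor algebra: it suffices to show that the element
\[
\mathcal{J} := \sum_{i,j} x_i \otimes x_j \otimes y_j \otimes y_i \in \lieup^{\otimes 2} \otimes \lieum^{\otimes 2}
\]
projects to zero in $\Sq_2 \otimes \Lq_2^{\mathrm{op}}$, i.e. lies in $R_S \otimes \lieum^{\otimes 2} + \lieup^{\otimes 2} \otimes R_\Lambda$, where $R_S \subset \lieup^{\otimes 2}$ and $R_\Lambda \subset \lieum^{\otimes 2}$ are the quadratic relations of $\Sq$ and $\Lq$ respectively.

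The key observation is the following identification. Equip $\lieup^{\otimes 2}$ and $\lieum^{\otimes 2}$ with the pairing
\[
\langle x \otimes x^\prime, y \otimes y^\prime \rangle_\mathrm{op} := \langle x, y^\prime \rangle \langle x^\prime, y \rangle,
\]
which is non-degenerate since the original pairing $\lieup \otimes \lieum \to \mathbb{C}$ is. Under this pairing, the basis of $\lieum^{\otimes 2}$ dual to $\{x_i \otimes x_j\}$ is $\{y_j \otimes y_i\}$, because $\langle x_i \otimes x_j, y_l \otimes y_k \rangle_\mathrm{op} = \delta_{il}\delta_{jk}$. Thus $\mathcal{J}$ is exactly the canonical identity element associated with the pairing $\langle \cdot, \cdot \rangle_\mathrm{op}$. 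This identification is where the ``opposite'' convention earns its keep: had the authors used the standard pairing, the lift of $\canonical^2$ would not be the canonical element, and the argument below would fail.

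Next, invoke the convention fixed in the paper just before the lemma: the relations $R_\Lambda$ are defined to be the orthogonal complement $R_S^{\perp_\mathrm{op}}$ with respect to $\langle \cdot, \cdot \rangle_\mathrm{op}$. A standard linear-algebra step then finishes the job: choose complements $\lieup^{\otimes 2} = R_S \oplus R_S^\prime$ and $\lieum^{\otimes 2} = R_\Lambda \oplus R_\Lambda^\prime$. Since $R_\Lambda = R_S^{\perp_\mathrm{op}}$, the pairing restricts to perfect pairings $R_S \otimes R_\Lambda^\prime \to \mathbb{C}$ and $R_S^\prime \otimes R_\Lambda \to \mathbb{C}$. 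Picking dual bases on each side, the identity element decomposes as
\[
\mathcal{J} = \sum_\alpha r_\alpha \otimes r^\alpha + \sum_\beta s_\beta \otimes s^\beta,
\]
where the first sum lies in $R_S \otimes R_\Lambda^\prime \subset R_S \otimes \lieum^{\otimes 2}$ and the second in $R_S^\prime \otimes R_\Lambda \subset \lieup^{\otimes 2} \otimes R_\Lambda$. Both summands are annihilated by the projection to $\Sq_2 \otimes \Lq_2$, so $\canonical^2 = 0$.

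The only mildly delicate step is the bookkeeping at the start, namely confirming that the opposite-algebra multiplication produces $y_j y_i$ rather than $y_i y_j$; once this is correct, the rest is a transparent consequence of how $R_\Lambda$ was defined as the orthogonal complement of $R_S$ with respect to the ``opposite'' pairing. No specifics of $\Uql$-modules or abelian nilradicals are needed, which is consistent with the fact that $\canonical^2 = 0$ is really a statement of quadratic Koszul duality.
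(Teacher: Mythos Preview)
Your argument is correct and is exactly the standard quadratic-duality computation that the paper points to by citation (Polishchuk--Positselski and Kr\"ahmer--Tucker-Simmons); the paper does not spell out any details, so you have effectively supplied the proof behind those references.

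One small imprecision worth tightening: when you pick arbitrary complements $R_S'$ and $R_\Lambda'$ and then take dual bases of the restricted pairings $R_S \times R_\Lambda'$ and $R_S' \times R_\Lambda$, the claimed decomposition $\mathcal{J} = \sum_\alpha r_\alpha \otimes r^\alpha + \sum_\beta s_\beta \otimes s^\beta$ would also require $\langle R_S', R_\Lambda' \rangle = 0$, which is not automatic. The clean fix is simply to take a basis $\{r_\alpha\} \cup \{s_\beta\}$ of $\lieup^{\otimes 2}$ adapted to $R_S \oplus R_S'$ and let $\{\rho^\alpha\} \cup \{\sigma^\beta\}$ be its dual basis with respect to the \emph{full} pairing; then each $\sigma^\beta$ is orthogonal to all $r_\alpha$ and hence lies in $R_S^\perp = R_\Lambda$, which is the only thing you need. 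The conclusion is unaffected.
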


\begin{proof}
This is a standard fact related to quadratic duality, see for instance \cite[Chapter 2, Section 3]{quadratic-algebras} and also \cite[Proposition 5.5]{KrTS15}.
\end{proof}

\subsection{Embeddings}

The quantum symmetric algebra $S_q(V)$ is defined in \cite{BrZw08} as a quotient of the tensor algebra $T(V)$. In the case of $V = \lieup$, it turns out that we can also identify $\Sq$ with a certain subalgebra of $\Uqg$, which is convenient for our purposes.

Choose a reduced decomposition of $w_0$ such that we have the factorization $w_0 = w_{0, \mathfrak{l}} w_\mathfrak{l}$, where $w_{0, \mathfrak{l}}$ is the longest word of the Weyl group of $\mathfrak{l}$ and $w_\mathfrak{l}$ is reduced. Let us write
\[
w_{0, \mathfrak{l}} = s_{i_1} \cdots s_{i_M}, \quad
w_\mathfrak{l} = s_{i_{M + 1}} \cdots s_{i_{M + N}}.
\]
Then the set of radical roots $\Delta(\lieup)$ can be enumerated by
\[
\xi_j := w_{0, \mathfrak{l}} s_{i_{M + 1}} \cdots s_{i_{M + j - 1}} (\alpha_{i_{M + j}}), \quad j = 1, \cdots, N.
\]
Consider the corresponding quantum root vectors $\{ E_{\xi_i} \}_{i = 1}^N$. Denote by $\tschubert \subset \Uqg$ the algebra generated by these elements, called the twisted Schubert cell in \cite{Zwi09}.

\begin{theorem}[{\cite[Main Theorem 5.6]{Zwi09}}]
\label{thm:zwi-schubert}
We have the following.
\begin{enumerate}
\item $\tschubert$ is a quadratic algebra with relations of the form
\[
E_{\xi_i} E_{\xi_j} - q^{(\xi_i, \xi_j)} E_{\xi_j} E_{\xi_i} = \sum_{i < a \leq b < j} c_{i j}^{a b} E_{\xi_a} E_{\xi_b}, \quad i < j.
\]
\item $\tschubert$ is invariant under the adjoint action of $\Uql$.
\item $\tschubert$ is isomorphic to $\Sq$ as a graded $\Uql$-module algebra.
\end{enumerate}
\end{theorem}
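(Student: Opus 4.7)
My plan is to handle the three claims in order, with the bulk of the work concentrated in part (3).

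For (1), I would specialize the general commutation relation \eqref{eq:comm-rel-uqg} to $\beta_j = \xi_i$ and $\beta_k = \xi_j$ for $i < j$. Because of our choice of reduced decomposition $w_0 = w_{0, \mathfrak{l}} w_\mathfrak{l}$, all positive roots lying strictly between $\xi_i$ and $\xi_j$ in the convex order are radical, namely $\xi_{i + 1}, \dots, \xi_{j - 1}$, so the right-hand side of \eqref{eq:comm-rel-uqg} is automatically a polynomial in the $E_{\xi_a}$ with $i < a < j$. The cominuscule hypothesis then forces the relations to be quadratic: each radical root $\xi_l$ has multiplicity one in the distinguished simple root $\alpha_t$, while $\xi_i + \xi_j$ has multiplicity two, so in any monomial $\prod_l E_{\xi_l}^{a_l}$ appearing on the right one must have $\sum_l a_l = 2$.

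For (2), it suffices to prove that the adjoint action of $\Uql$ preserves $V := \mathrm{span}\{E_{\xi_i}\}_{i = 1}^N$, since $\ad$ acts by algebra automorphisms and $\tschubert$ is generated by $V$. This reduces to a weight argument: for a Levi index $i \in S$, $\ad(E_i) E_{\xi}$ lies in the weight space of weight $\alpha_i + \xi$, which is either zero or, because adding a Levi simple root preserves the $\alpha_t$-multiplicity of $\xi$, again a radical root space. Since positive root spaces of $\Uqg$ are one-dimensional, $\ad(E_i) E_\xi$ must be a scalar multiple of $E_{\alpha_i + \xi}$, hence lies in $V$. Analogous considerations handle $\ad(F_i)$ and $\ad(K_\lambda)$.

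The main obstacle is part (3). Both $\tschubert$ and $\Sq$ are quadratic algebras whose degree-one part is naturally a $\Uql$-module; the strategy is to match them at the level of quadratic relations and then compare Hilbert series. Using part (2), the relations from (1) span a $\Uql$-submodule $R \subset V \otimes V$. To identify $R$ with the defining relation module of $\Sq$, I would decompose $V \otimes V$ into $\Uql$-isotypic components and, using the Berenstein--Zwicknagl description of $S_q(\lieup)$, check that both algebras kill exactly the same isotypic summands; the cominuscule hypothesis enters essentially at this step, as it controls which $\Uql$-isotypic pieces of $\lieup \otimes \lieup$ are present. To close the argument, I would observe that the convex ordering supplies a PBW-type spanning set $E_{\xi_1}^{a_1} \cdots E_{\xi_N}^{a_N}$ for $\tschubert$, giving an upper bound for its Hilbert series by the classical one; combined with the flatness of $\Sq$ established in \cite{Zwi09}, this forces equality of the two algebras. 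The sticking point is the identification of $R$ with the defining module of $\Sq$, since it requires tracking the scalars $c_{ij}^{ab}$ of (1) against explicit antisymmetric isotypic components of $\lieup \otimes \lieup$.
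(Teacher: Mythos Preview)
The paper does not prove this theorem: it is quoted verbatim from \cite[Main Theorem 5.6]{Zwi09} and used as input, with no argument given. So there is no in-paper proof to compare your proposal against.

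That said, your sketch for (1) and (2) is essentially the standard route and is in line with how these facts are typically justified; the cominuscule multiplicity argument for quadraticity and the weight/one-dimensional root space argument for $\Uql$-invariance are both sound. One small point in (2): for $\ad(F_i) E_\xi$ you should say why a weight-$\xi - \alpha_i$ element of the positive part of $\Uqg$ vanishes when $\xi - \alpha_i$ is not a root; this follows from PBW and the $\alpha_t$-grading, but it is worth making explicit.

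For (3) your outline is honest about where the difficulty sits. The Hilbert-series comparison is fine once you have flatness of $\Sq$ from \cite{Zwi09}, but the identification of the relation module $R$ with the defining relations of $\Sq$ is exactly the substantive content of Zwicknagl's theorem, and ``check that both algebras kill the same isotypic summands'' hides a genuine computation (in \cite{Zwi09} this passes through the $R$-matrix Poisson structure and its quantization). As written, this step is a reference to the cited paper rather than an independent argument, so your proposal in (3) is really a roadmap to the existing proof rather than a different one.
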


We will denote by $\embU : \Sq \to \Uqg$ the map giving the isomorphism in this theorem. We will also fix the basis $\{ x_i \}_i$ of $\lieup$ in such a way that $\embU(x_i) = E_{\xi_i}$.
We consider $\Uqg$ as a $\Uql$-module via the \emph{adjoint action} $\ad(X) Y := X_{(1)} Y S(X_{(2)})$ (we will also denote it by $\triangleright$).

Next, let $\gamma : \Lq \to \mathrm{End}(\Lq)$ be the \emph{right regular representation} of $\Lambda_q(\lieum)$ on itself, that is $\gamma(y) y^\prime := y^\prime \wedge y$ for $y, y^\prime \in \Lq$.
We consider $\mathrm{End}(\Lq)$ as a $\Uql$-module via
\[
\adT(X) T := X_{(2)} T S^{-1}(X_{(1)}), \quad X \in \Uql, \ T \in \mathrm{End}(\Lq).
\]

\begin{lemma}
\label{lem:equiv-maps}
The maps $\phi_U : \Sq \to \Uqg$ and $\gamma : \Lq \to \mathrm{End}(\Lq)$ are equivariant.
\end{lemma}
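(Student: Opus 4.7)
The lemma splits into two independent statements. The equivariance of $\phi_U$ is essentially contained in \cref{thm:zwi-schubert}: part (2) asserts that $\tschubert \subset \Uqg$ is stable under the adjoint action of $\Uql$, and part (3) asserts that $\phi_U : \Sq \to \tschubert$ is an isomorphism of $\Uql$-module algebras. So the $\Uql$-action on $\tschubert$ inherited from $\Uqg$ via $\ad$ is, by definition, the one that makes $\phi_U$ equivariant, and nothing further needs to be checked.

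For the map $\gamma$, I would verify the identity $\gamma(X \triangleright y) = \adT(X) \gamma(y)$ directly by evaluating both sides on an arbitrary $y^\prime \in \Lq$. Using the definitions of $\adT$ and $\gamma$, the right-hand side becomes
\[
X_{(2)} \triangleright \bigl( (S^{-1}(X_{(1)}) \triangleright y^\prime) \wedge y \bigr).
\]
Applying the $\Uql$-module algebra property of $\Lq$ to expand $X_{(2)} \triangleright (\,\cdot \wedge \cdot\,)$, and then using coassociativity to rewrite $X_{(1)} \otimes X_{(2)(1)} \otimes X_{(2)(2)}$ as $X_{(1)(1)} \otimes X_{(1)(2)} \otimes X_{(2)}$, the expression becomes
\[
\bigl( X_{(1)(2)} S^{-1}(X_{(1)(1)}) \triangleright y^\prime \bigr) \wedge (X_{(2)} \triangleright y).
\]

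The key ingredient is the Hopf algebra identity $\sum X_{(2)} S^{-1}(X_{(1)}) = \varepsilon(X) 1$, which follows from the antipode axiom by applying $S^{-1}$ and using that $S$ is an antihomomorphism. Applied to $X_{(1)}$ in the expression above, it collapses the first wedge factor to $\varepsilon(X_{(1)}) y^\prime$, and the counit identity then yields $y^\prime \wedge (X \triangleright y) = \gamma(X \triangleright y)(y^\prime)$, as required. There is no real obstacle here: the calculation is essentially dictated by Sweedler bookkeeping. What is worth emphasizing is that the appearance of $S^{-1}$ (rather than $S$) in the definition of $\adT$ is precisely what is needed so that the \emph{right} regular representation $\gamma$ of the module algebra $\Lq$ comes out equivariant under this conjugation-type action on $\mathrm{End}(\Lq)$.
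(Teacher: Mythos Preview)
Your proof is correct and follows essentially the same approach as the paper. The only cosmetic difference is that the paper first records the operator identity $X\gamma(y) = \gamma(X_{(2)}y)\,X_{(1)}$ (obtained from the module-algebra property) and then substitutes it into $\adT(X)\gamma(y) = X_{(2)}\gamma(y)S^{-1}(X_{(1)})$, whereas you evaluate on a test element $y'$ from the outset; the Sweedler manipulation and the use of $X_{(2)}S^{-1}(X_{(1)}) = \varepsilon(X)1$ are identical.
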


\begin{proof}
The equivariance of $\phi_U$ follows from the above theorem. To show the equivariance of $\gamma$, we use its definition and the fact that $\Lq$ is a $\Uql$-module algebra. We compute
\[
X \gamma(y) y^\prime = X (y^\prime \wedge y) = (X_{(1)} y^\prime) \wedge (X_{(2)} y) = \gamma(X_{(2)} y) X_{(1)} y^\prime.
\]
From this it immediately follows that
\[
\adT(X) \gamma(y) = X_{(2)} \gamma(y) S^{-1}(X_{(1)}) = \gamma(X_{(3)} y) X_{(2)} S^{-1}(X_{(1)}) = \gamma(X y),
\]
which shows that the map $\gamma$ is equivariant with respect to $\Uql$.
\end{proof}

\subsection{Definition of $D$}

We will follow the approach of \cite{KrTS15}, with minor modifications. Let $\lieup$ be an abelian nilradical and let $\lieum$ be its dual. We identify the quantum symmetric algebra $S_q(\lieup)$ with the algebra generated by the $\{ E_{\xi_i} \}_i$. We identify the quadratic dual with the quantum exterior algebra $\Lambda_q(\lieum)$ and denote by $\{ y_i \}_i$ the basis dual to $\{ E_{\xi_i} \}_i$.

\begin{definition}
With the above notation, we define the \emph{Dolbeault element} $\dolb \in \Uqg \otimes \mathrm{End}(\Lambda_q(\lieum))$ by $\dolb := (\embU \otimes \gamma) (\canonical)$. More explicitely, we have the expression
\[
\dolb = \sum_i \embU(x_i) \otimes \gamma(y_i) = \sum_i E_{\xi_i} \otimes \gamma(y_i).
\]
\end{definition}

It follows immediately from \cref{lem:canonical-square} that we have $\dolb^2 = 0$.

Next we choose a $\Uql$-invariant Hermitian inner product on $\Lq$, denoted by $(\cdot, \cdot)$. This means that for any elements $y, y^\prime \in \Lq$ we should have
\[
(X y, y^\prime) = (y, X^* y^\prime), \quad \forall X \in \Uql,
\]
where on the right-hand side $*$ denotes the $*$-structure of $\Uql$. From this inner product we get an adjoint operation on $\mathrm{End}(\Lq)$, which we also denote by $*$.

\begin{definition}
\label{def:dolb-dirac}
We define the \emph{Dolbeault--Dirac element} by $D := \dolb + \dolb^* \in \Uqg \otimes \mathrm{End}(\Lq)$.
\end{definition}

Observe that this definition depends on the choice of the Hermitian inner product on $\Lq$. It will turn out, in the example treated in this paper, that there is an essentially unique choice which gives a simple expression for $D^2$. A similar situation arises in \cite{mat-proj}.

\begin{remark}
We compare our definition with the one given in \cite{KrTS15}. In the cited paper we have $D_{\mathrm{KTS}} = \eth_{\mathrm{KTS}} + \eth_{\mathrm{KTS}}^*$, where $\eth_{\mathrm{KTS}} = \sum_i S^{-1}(E_{\xi_i}) \otimes \gamma_-(y_i)$. Here $\gamma_-(\cdot)$ are the analogue of contraction operators in the exterior algebra, and are defined by duality from the left regular representation $\gamma_+$. The element $\eth_{\mathrm{KTS}}$ can be identified with the Koszul differential of $\Sq^{\mathrm{op}} \otimes \Lq$, and classically reduces to the adjoint of the Dolbeault operator $\bar{\partial}$.

In our case, the Dolbeault element is identified with the Koszul differential of $\Sq \otimes \Lq^{\mathrm{op}}$, which is why we use the right regular representation $\gamma$. Moreover in the classical case it gives the Dolbeault operator, as opposed to its adjoint.
\end{remark}


\subsection{Spinor bundle}

The Dolbeault--Dirac element $D$ naturally acts on $\Cqg \otimes \Lq$.
However we will consider its action on a proper subspace, defined below.

\begin{definition}
We define $\spinbun := (\Cqg \otimes \Lq)^{\Uql}$. More explicitly we have
\[
\spinbun = \{ \xi \in \Cqg \otimes \Lq : X \xi = \varepsilon(X) \xi, \ \forall X \in \Uql \},
\]
where the action of $\Uql$ on the tensor product is given by $X (a \otimes y) = X_{(2)} a \otimes X_{(1)} y$.
\end{definition}

Here we are using the same conventions used in \cite[Section 6]{dirac-flag}.
Observe that, for an element $\xi \in \Cqg \otimes \Lq$, the condition $\xi \in \spinbun$ is equivalent to
\begin{equation}
\label{eq:spin-bundle}
(X \otimes 1) \xi = (1 \otimes S(X)) \xi, \quad \forall X \in \Uql.
\end{equation}
In the classical case, the space $\spinbun$ can be identified with the sections of the bundle of anti-holomorphic forms on $G / P$, for an appropriate complex structure.

We will now show that the Dolbeault--Dirac element naturally acts on $\spinbun$.

\begin{proposition}
The action of the Dolbeault element $\dolb$ maps $\spinbun$ into itself. Hence the Dolbeault--Dirac element $D$ acts on $\spinbun$.
\end{proposition}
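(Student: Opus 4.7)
The plan is to show that $\dolb$ commutes with the $\Uql$-action on $\Cqg \otimes \Lq$; since $\spinbun$ is the subspace of $\Uql$-invariants, it follows that $\dolb$ preserves $\spinbun$. The ``hence'' part of the proposition then follows by an analogous argument for $\dolb^*$: the $\Uql$-action on $\Cqg$ is by $*$-algebra maps, the Hermitian form on $\Lq$ is $\Uql$-invariant by construction, and $\Uql$ is a $*$-Hopf subalgebra of $\Uqg$, so taking adjoints preserves the commutation property established for $\dolb$ and hence $\dolb^*$ also maps $\spinbun$ into itself.

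To prove that $\dolb$ commutes with $X \in \Uql$, I would take $\xi = a \otimes y$ and compute $X(\dolb\xi)$ using $X(b \otimes z) = X_{(2)} b \otimes X_{(1)} z$ together with the module-algebra structure of $\Lq$. This introduces a triple Sweedler coproduct of $X$. Next, I would commute each resulting Cartan factor past $E_{\xi_i}$ using the Hopf identity $XY = \ad(X_{(1)})(Y)\, X_{(2)}$, and then replace $\ad(X)(E_{\xi_i}) = \embU(X x_i)$ via the equivariance of $\embU$ from \cref{lem:equiv-maps}. Simultaneously, the wedge product on the $\Lq$-factor is rewritten in the form $\gamma(X_{(\cdot)} y_i)$ acting on $X_{(\cdot)} y$.

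At this point the intermediate expression contains the combination $\sum_i \embU(X_{(3)} x_i) \otimes \gamma(X_{(2)} y_i)$, which is precisely $(\embU \otimes \gamma)$ applied to $(X_{(3)} \otimes X_{(2)})\canonical$. Invoking the $\Uql$-invariance of $\canonical$ in the form $(X_{(2)} \otimes X_{(1)}) \canonical = \varepsilon(X) \canonical$ (equivalent to \eqref{eq:condition-I}) collapses this sum to $\varepsilon(X_{(2)})\,\dolb$. Contracting the factor $\varepsilon(X_{(2)})$ with the remaining Sweedler indices via the counit axiom reduces the triple coproduct back to a double one, and the resulting expression is exactly $\dolb(X\xi)$. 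Thus $X(\dolb\xi) = \dolb(X\xi)$ as operators on $\Cqg \otimes \Lq$; specializing to $\xi \in \spinbun$ yields $X(\dolb\xi) = \dolb(X\xi) = \varepsilon(X)\dolb\xi$, proving $\dolb\xi \in \spinbun$.

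The only real difficulty is bookkeeping: one must track the indices of $\Delta^{(3)}(X)$ carefully and make sure that the assignment of Sweedler factors to the $\lieup$- and $\lieum$-components of $\canonical$ matches the \emph{flipped} invariance $(X_{(2)} \otimes X_{(1)})\canonical = \varepsilon(X)\canonical$ rather than the straight coproduct. Once the indices are correctly aligned, the argument reduces to a routine Hopf-algebraic manipulation, requiring no further input beyond the invariance of $\canonical$ and the equivariance of $\embU$ and $\gamma$.
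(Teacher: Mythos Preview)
Your proposal is correct and uses the same essential ingredients as the paper's proof: the invariance of $\canonical$ (equation \eqref{eq:condition-I}) together with the equivariance of $\embU$ and $\gamma$ from \cref{lem:equiv-maps}. The framing differs slightly: you establish the stronger statement that $\dolb$ commutes with the $\Uql$-action on all of $\Cqg \otimes \Lq$, so that it automatically preserves the invariant subspace $\spinbun$; the paper instead works directly with the equivalent characterization \eqref{eq:spin-bundle} and verifies $(X \otimes 1)\dolb\xi = (1 \otimes S(X))\dolb\xi$ for $\xi \in \spinbun$, passing through the intermediate identity $(\ad(X)\otimes\id)(\dolb) = (\id\otimes\adT(S(X)))(\dolb)$. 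Your route is marginally cleaner conceptually (commutation with a module action implies preservation of invariants), while the paper's route stays closer to the defining condition of $\spinbun$; the underlying Hopf-algebraic manipulation is the same in both. Your treatment of $\dolb^*$ via the $*$-structure is also correct and matches the paper's one-line justification. One small wording issue: you refer to commuting ``Cartan factors'' past $E_{\xi_i}$, but the relevant factors are arbitrary elements of $\Uql$, not just Cartan elements.
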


\begin{proof}
To prove that $\dolb$ maps $\spinbun$ into itself it suffices to show that $(X \otimes 1) \dolb \xi = (1 \otimes S(X)) \dolb \xi$ for all $X \in \Uql$ and $\xi \in \spinbun$.
We start by considering the identity
\[
(X \otimes 1) \dolb = (X_{(1)} \otimes 1) \dolb (S(X_{(2)}) X_{(3)} \otimes 1) = (\ad(X_{(1)}) \otimes \id) (\dolb) (X_{(2)} \otimes 1).
\]
Recall that we have defined $\dolb = (\embU \otimes \gamma) (\canonical)$ and that $(X \otimes 1) \canonical = (1 \otimes S(X)) \canonical$ by \eqref{eq:condition-I}. Since the maps $\embU$ and $\gamma$ are equivariant as in \cref{lem:equiv-maps}, we have
\[
(\ad(X) \otimes \id) (\dolb) = (\id \otimes \adT(S(X))) (\dolb).
\]
Using this identity we can rewrite
\[
(X \otimes 1) \dolb = (\id \otimes \adT(S(X_{(1)}))) (\dolb) (X_{(2)} \otimes 1) = (1 \otimes S(X_{(1)})) \dolb (X_{(3)} \otimes X_{(2)}).
\]
Finally acting on $\xi \in \spinbun$ and using the condition \eqref{eq:spin-bundle} we obtain
\[
(X \otimes 1) \dolb \xi = (1 \otimes S(X_{(1)})) \dolb (X_{(3)} S^{-1}(X_{(2)}) \otimes 1) \xi = (1 \otimes S(X)) \dolb \xi.
\]
This shows that $\dolb$ maps $\spinbun$ into itself. The same is true for $\dolb^*$, since this is the adjoint of $\dolb$ with respect to a $\Uql$-invariant inner product, and hence for $D$.
\end{proof}

\section{Lagrangian Grassmannian}
\label{sec:lagrangian}

In this section we will provide some details regarding the quantum Lagrangian Grassmannian $\LagGr$.
We describe in particular the $\Uql$-module $\lieup$, corresponding to its tangent space, and well as its corresponding quantum root vectors.

\subsection{Lie algebra $C_2$}

Consider the complex simple Lie algebra $C_2 = \mathfrak{sp}_4$. The simple roots are $\{ \alpha_1, \alpha_2 \}$ and we choose the standard convention in which $\alpha_1$ is short and $\alpha_2$ is long. Corresponding to this choice, and fixing $(\alpha_1, \alpha_1) = 2$, we have
\[
(\alpha_1, \alpha_1) = 2, \quad
(\alpha_1, \alpha_2) = -2, \quad
(\alpha_2, \alpha_2) = 4.
\]
We will consider the fundamental representation $V(\omega_1)$. This will be used later on to construct a central element of $U_q(\mathfrak{sp}_4)$ playing the role of the quadratic Casimir. The weights are
\[
\lambda_1 := \omega_1 = \alpha_1 + \frac{1}{2} \alpha_2, \quad
\lambda_2 := - \omega_1 + \omega_2 = \frac{1}{2} \alpha_2, \quad
\lambda_3 := - \lambda_2, \quad
\lambda_4 := - \lambda_1.
\]
We fix a weight basis $\{ v_i \}_{i = 1}^4$, where the vector $v_i$ has weight $\lambda_i$.

\begin{lemma}
\label{lem:fundamental-c2}
The action of $U_q(\mathfrak{sp}_4)$ on $V(\omega_1)$ is given by
\[
\begin{gathered}
K_1 v_1 = q v_1, \quad K_1 v_2 = q^{-1} v_2, \quad K_1 v_3 = q v_3, \quad K_1 v_4 = q^{-1} v_4, \\
K_2 v_1 = v_1, \quad K_2 v_2 = q^2 v_2, \quad K_2 v_3 = q^{-2} v_3, \quad K_2 v_4 = v_4, \\
E_1 v_2 = q^{1/2} v_1, \quad E_1 v_4 = q^{1/2} v_3, \quad E_2 v_3 = q v_2, \\
F_1 v_1 = q^{-1/2} v_2, \quad F_1 v_3 = q^{-1/2} v_4, \quad F_2 v_2 = q^{-1} v_3,
\end{gathered}
\]
and with all the other elements equal to zero.
\end{lemma}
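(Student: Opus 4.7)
The plan is to verify the formulas by direct and essentially routine computation, using only (i) the fact that each $v_i$ is a weight vector of weight $\lambda_i$, (ii) the weight-shifting property of $E_i$ and $F_i$, and (iii) the defining relations of $\Uqg$ to pin down normalizations. Since $V(\omega_1)$ is the four-dimensional fundamental (vector) representation of $C_2$, any assignment satisfying these constraints that respects the known weight-space decomposition must be the correct one.

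First I would handle the Cartan actions. The relation $K_\mu v_i = q^{(\mu,\lambda_i)} v_i$ reduces the first two lines of the lemma to computing $(\alpha_j,\lambda_i)$ for $j=1,2$ and $i=1,\dots,4$. Using the given bilinear form values together with $\omega_1 = \alpha_1 + \tfrac{1}{2}\alpha_2$ and $\omega_2 = \alpha_1 + \alpha_2$ (the latter is forced by $(\omega_2,\alpha_i^\vee) = \delta_{2i}$ and the given inner products), one obtains the eigenvalues $q^{\pm 1}$ for $K_1$ and $q^{0, \pm 2}$ for $K_2$ displayed in the statement. The vanishing of all unlisted $E_i, F_i$ actions is then immediate from weights: for example, $\lambda_1 + \alpha_1 = 2\omega_1$ and $\lambda_3 + \alpha_2$ are not weights of $V(\omega_1)$, so $E_1 v_1 = E_2 v_3 = \cdots = 0$, and similarly $v_1$ and $v_2$ cannot be in the image of $F_2$, etc.

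Next I would fix the coefficients of the nonzero $E_i$ and $F_i$ actions. Since the four weight spaces are one-dimensional, each nonzero $E_i$ or $F_i$ acts as a scalar between the relevant weight spaces, e.g. $E_1 v_2 = a v_1$, $F_1 v_1 = b v_2$, etc. The relation $[E_1,F_1] v_2 = [K_1 - K_1^{-1}]/(q-q^{-1}) v_2 = -[1]_q v_2$ (for $\lambda_2$ with $(\alpha_1,\lambda_2)=-1$), and its counterpart on $v_1$, together with the analogous identity for $E_2, F_2$ acting on $v_2, v_3$, determine the products $ab$ of each conjugate pair. The residual freedom is absorbed by rescaling each $v_i$; the displayed values $q^{\pm 1/2}$ and $q^{\pm 1}$ correspond to the unique (up to an overall phase) normalization which simultaneously makes the $\ast$-structure $E_i^\ast = F_i K_i$, $F_i^\ast = K_i^{-1} E_i$ act as the adjoint with respect to the orthonormal basis $\{v_i\}$, which is the convention I would adopt.

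The only non-mechanical point is checking that this rescaled action does define a $\Uqg$-representation, i.e.\ that the quantum Serre relations hold. Both Serre relations involve compositions that take $v_i$ to a weight outside $\{\lambda_1,\dots,\lambda_4\}$ after at most two applications, so nearly every term vanishes for weight reasons and the relations reduce to a small number of scalar identities; verifying these with the chosen normalizations is the one calculation I would actually carry out. No genuine obstacle is expected, as this is simply the quantum analogue of the vector representation of $\mathfrak{sp}_4$, written in the conventions of \cite[Chapter 4]{jantzen}.
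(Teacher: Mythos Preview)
Your approach is correct and is exactly what the paper does: the paper's entire proof is ``Follows from simple computations,'' and you have spelled out what those computations are. One small slip: in your paragraph on vanishing actions you list $E_2 v_3 = 0$ as an example, but $\lambda_3 + \alpha_2 = \tfrac{1}{2}\alpha_2 = \lambda_2$ \emph{is} a weight of $V(\omega_1)$, and indeed $E_2 v_3 = q v_2$ appears in the statement; you presumably meant something like $E_2 v_1$ or $E_2 v_2$.
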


\begin{proof}
Follows from simple computations.
\end{proof}

\subsection{Parabolic subalgebra}

We consider the cominuscule parabolic subalgebra obtained from $S = \{ \alpha_1 \}$, that is by \emph{removing} the long root $\alpha_2$. The nilradical is then given by
\[
\lieup = \mathrm{span} \{ e_{\alpha_2}, \ e_{\alpha_1 + \alpha_2}, \ e_{2 \alpha_1 + \alpha_2} \}.
\]
The Levi factor $\mathfrak{l} = \mathrm{span} \{ h_1, h_2, e_1, f_1 \}$ can be identified with $\mathfrak{gl}_2$. Its semi-simple part $\mathfrak{l}_{ss} = \mathrm{span} \{ h_1, e_1, f_1 \}$ can be identified with $\mathfrak{sl}_2$. Also the nilradical $\lieup$ can be identified with the adjoint representation of $\mathfrak{sl}_2$ (we will rederive this result later in the quantum case).

Let $w_0$ be the longest word of the Weyl group of $\mathfrak{sp}_4$. Consider the reduced decomposition $w_0 = s_1 s_2 s_1 s_2$, which factorizes as $w_0 = w_{0, \mathfrak{l}} w_\mathfrak{l}$, where $w_{0, \mathfrak{l}} = s_1$ is the longest word corresponding to $\mathfrak{sl}_2$ and $w_\mathfrak{l} = s_2 s_1 s_2$. With this decomposition we obtain the positive roots
\[
\beta_1 = \alpha_1, \quad \beta_2 = 2 \alpha_1 + \alpha_2, \quad \beta_3 = \alpha_1 + \alpha_2, \quad \beta_4 = \alpha_2.
\]
The radical roots corresponding to $\lieup$ are then given by
\[
\xi_1 = \beta_2 = 2 \alpha_1 + \alpha_2, \quad
\xi_2 = \beta_3 = \alpha_1 + \alpha_2, \quad
\xi_3 = \beta_4 = \alpha_2.
\]

The corresponding homogeneous space $G / P$ is an example of a Lagrangian Grassmannian, as mentioned in the introduction. In general we have $G / P \cong G_0 / L_0$, where $G_0$ is the compact real form of $G$ and $L_0 = G \cap L$, with $L$ the Levi factor. Then $G_0 / L_0 = Sp(2) / U(2)$, where $Sp(n)$ is the compact real form of the complex Lie group $Sp(2n, \mathbb{C})$.

In the following we will frequently use the general notation
\[
\Uqg = U_q(\mathfrak{sp}_4), \quad \Uql = U_q(\mathfrak{gl}_2), \quad \Uqlss = U_q(\mathfrak{sl}_2),
\]
as it clarifies the role of the different algebras in the various steps.

\subsection{Quantum root vectors}

We will now obtain explicit expressions for the quantum root vectors, which are defined using the reduced decomposition $w_0 = s_1 s_2 s_1 s_2$.

\begin{lemma}
\label{lem:root-E}
1) The quantum root vectors $\{ E_{\beta_i} \}_{i = 1}^4$ are given by
\[
\begin{gathered}
E_{\beta_1} = E_1, \quad
E_{\beta_2} = \frac{1}{[2]_q} E_1^2 E_2 - q^{-1} E_1 E_2 E_1 + \frac{q^{-2}}{[2]_q} E_2 E_1^2, \\
E_{\beta_3} = E_1 E_2 - q^{-2} E_2 E_1, \quad
E_{\beta_4} = E_2.
\end{gathered}
\]
2) The quantum root vectors $\{ F_{\beta_i} \}_{i = 1}^4$ are given by
\[
\begin{gathered}
F_{\beta_1} = F_1, \quad
F_{\beta_2} = \frac{1}{[2]_q} F_2 F_1^2 - q F_1 F_2 F_1 + \frac{q^2}{[2]_q} F_1^2 F_2, \\
F_{\beta_3} = F_2 F_1 - q^2 F_1 F_2, \quad
F_{\beta_4} = F_2.
\end{gathered}
\]
\end{lemma}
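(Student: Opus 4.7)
My plan is to apply Lusztig's braid group action directly, using the explicit formulas recorded in \cite[Chapter 8]{jantzen}. The ingredients are $T_i(E_i) = -F_i K_i$, $T_i(F_i) = -K_i^{-1} E_i$, $T_i(K_\mu) = K_{s_i \mu}$, and for $i \neq j$ with $r = -a_{ij}$,
\[
T_i(E_j) = \sum_{k=0}^{r} (-1)^k q_i^{-k} E_i^{(r-k)} E_j E_i^{(k)}, \quad
T_i(F_j) = \sum_{k=0}^{r} (-1)^k q_i^{k} F_i^{(k)} F_j F_i^{(r-k)},
\]
together with the fact that each $T_i$ is an algebra automorphism. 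For the Cartan data of $C_2$ we have $q_1 = q$, $q_2 = q^2$, $-a_{12} = 2$ and $-a_{21} = 1$.

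The case $E_{\beta_1} = E_1$ is the definition, and for $E_{\beta_2} = T_1(E_2)$ I simply expand the formula with $r = 2$ and $q_i = q$, rewriting the divided powers $E_1^{(2)} = E_1^2/[2]_q$ to match the stated expression. For $E_{\beta_3} = T_1 T_2(E_1)$ I first apply the $r = 1$ formula to get $T_2(E_1) = E_2 E_1 - q^{-2} E_1 E_2$. Since $T_1$ is an algebra morphism, $E_{\beta_3} = T_1(E_2) T_1(E_1) - q^{-2} T_1(E_1) T_1(E_2) = -E_{\beta_2} F_1 K_1 + q^{-2} F_1 K_1 E_{\beta_2}$. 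Commuting $K_1$ past $E_{\beta_2}$ with the weight factor $q^{(\alpha_1, \beta_2)} = q^2$ turns this into $[F_1, E_{\beta_2}] K_1$. A direct evaluation of $[F_1, E_{\beta_2}]$, using $[F_1, E_2] = 0$ and the Cartan relations, then collapses the $K_1$ and yields $E_1 E_2 - q^{-2} E_2 E_1$. Finally, $E_{\beta_4} = T_1 T_2 T_1(E_2)$ has weight $s_1 s_2 s_1(\alpha_2) = \alpha_2$; since the corresponding weight space of the positive part of $\Uqg$ is one-dimensional and spanned by $E_2$, we must have $E_{\beta_4} = c\, E_2$ for a scalar $c$, which the same kind of commutator simplification forces to be $c = 1$.

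For the negative root vectors $F_{\beta_j}$ the strategy is identical, now using the formula for $T_i(F_j)$ in place of $T_i(E_j)$. The shape of $T_1(F_2)$, with $q_i^{-k}$ replaced by $q_i^{k}$ and the ordering of $F_i^{(k)}$ reversed, immediately produces $F_{\beta_2}$, while $F_{\beta_3}$ and $F_{\beta_4}$ are obtained by the analogous substitution $T_1(F_1) = -K_1^{-1} E_1$ and weight-space argument.

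The only real obstacle is the simplification that appears in $E_{\beta_3}$ (and symmetrically in $F_{\beta_3}$): after applying $T_1$ one sees a superficially messy expression involving $F_1$, $K_1$ and $E_{\beta_2}$, but the weight of the final answer forces the mixed $E/F/K$ terms to collapse into a purely positive $q$-commutator, which must be checked by hand. Beyond Lusztig's formulas and the defining relations of $\Uqg$ no further input is needed, and the lemma is genuinely a bookkeeping statement, consistent with the author's remark that it follows from simple computations.
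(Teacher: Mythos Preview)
Your approach is correct. For part 1) it coincides with the paper's: both simply unpack Lusztig's formulae from \cite[Chapter~8]{jantzen}, and you merely spell out the intermediate steps (including the commutator simplification for $E_{\beta_3}$ and the weight-space argument for $E_{\beta_4}$) that the paper leaves implicit by citing \cite[Subsection~8.17]{jantzen}.

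For part 2) the routes diverge. You redo the computation verbatim, now with the $T_i(F_j)$ formula in place of $T_i(E_j)$, which works but means repeating the same chain of simplifications. The paper instead invokes the anti-automorphism $\Omega$ of $\Uqg$ determined by $\Omega(E_i) = F_i$, $\Omega(F_i) = E_i$, $\Omega(K_\lambda) = K_\lambda^{-1}$, $\Omega(q) = q^{-1}$; since $\Omega$ commutes with the Lusztig automorphisms $T_i$, applying $\Omega$ to the expressions already obtained for the $E_{\beta_i}$ immediately yields the $F_{\beta_i}$, with the reversal of order and the sign change $q \mapsto q^{-1}$ coming for free from $\Omega$ being an anti-automorphism. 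Your direct method is self-contained and avoids appealing to an extra structural fact, while the paper's argument is shorter and makes transparent why the $F$-formulae look like the $E$-formulae read backwards with $q$ inverted.
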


\begin{proof}
1) This follows from the computations given in \cite[Subsection 8.17]{jantzen}, keeping in mind our choice for the reduced decomposition of $w_0$.

2) This follows by applying the anti-automorphism $\Omega : \Uqg \to \Uqg$, which is defined by
\[
\Omega(K_\lambda) = K_\lambda^{-1}, \quad
\Omega(E_i) = F_i, \quad
\Omega(F_i) = E_i, \quad
\Omega(q) = q^{-1}.
\]
Since $\Omega$ commutes with the Lusztig automorphisms $T_i$, we obtain the result.
\end{proof}

\section{The symmetric and exterior algebras}
\label{sec:algebras}

In this section we will derive various properties of the algebras $\Sq$ and $\Lq$, in the case when $\lieup$ is the nilradical corresponding to the Lagrangian Grassmannian.

\subsection{Relations for $\Sq$}

Consider the the algebra generated by the quantum root vectors $\{ E_{\xi_i} \}_{i = 1}^3$ in $\Uqg$. Then according to \cref{thm:zwi-schubert} this is a quadratic algebra isomorphic to $\Sq$. In the following we will make this identification and simply denote the former algebra by $\Sq$. We start by determining these relations. Recall that $Q = q - q^{-1}$.

\begin{proposition}
The algebra $S_q(\mathfrak{u}_+)$ has generators $\{ E_{\xi_i} \}_{i = 1}^3$ and relations
\[
E_{\xi_1} E_{\xi_2} = q^2 E_{\xi_2} E_{\xi_1}, \quad
E_{\xi_2} E_{\xi_3} = q^2 E_{\xi_3} E_{\xi_2}, \quad
E_{\xi_1} E_{\xi_3} = E_{\xi_3} E_{\xi_1} + Q \frac{q}{[2]_q} E_{\xi_2}^2.
\]
\end{proposition}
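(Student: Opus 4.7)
The plan is to apply the Levendorskii--Soibelman-type commutation formula \eqref{eq:comm-rel-uqg} to each pair of the quantum root vectors $E_{\xi_1} = E_{\beta_2}$, $E_{\xi_2} = E_{\beta_3}$, $E_{\xi_3} = E_{\beta_4}$. A direct computation yields the inner products $(\xi_1, \xi_2) = 2$, $(\xi_2, \xi_3) = 2$, $(\xi_1, \xi_3) = 0$, which fix the $q$-exponents on the left-hand sides in agreement with the statement. For the first two relations, with $(j, k) = (2, 3)$ and $(3, 4)$, the set of intermediate indices in \eqref{eq:comm-rel-uqg} is empty, so the right-hand side vanishes identically and the identities $E_{\xi_1} E_{\xi_2} = q^2 E_{\xi_2} E_{\xi_1}$ and $E_{\xi_2} E_{\xi_3} = q^2 E_{\xi_3} E_{\xi_2}$ follow at once.

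For the third relation, $(j, k) = (2, 4)$, so the only intermediate index is $a_3$, associated to $\xi_2 = \beta_3$. Weight matching $\xi_1 + \xi_3 = 2\alpha_1 + 2\alpha_2 = 2\xi_2$ then forces $a_3 = 2$, so that \eqref{eq:comm-rel-uqg} gives $[E_{\xi_1}, E_{\xi_3}] = c\, E_{\xi_2}^2$ for some scalar $c \in \mathbb{C}$ that still has to be pinned down.

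To compute $c$, I would expand both sides using the explicit formulas of \cref{lem:root-E}. Writing $A := E_1$, $B := E_2$ for brevity, and using the long-root Serre relation $A B^2 = [2]_{q^2} B A B - B^2 A$ (equivalently $E_2^2 E_1 - [2]_{q^2} E_2 E_1 E_2 + E_1 E_2^2 = 0$) to bring every $AB^2$ and $B^2 A$ pattern to normal form, a short calculation gives
\[
[E_{\xi_1}, E_{\xi_3}] = \frac{q Q}{[2]_q} \Bigl[ (A B A B - B A B A) + q^{-2} (B^2 A^2 - B A^2 B) \Bigr],
\]
while the same Serre substitution applied to $E_{\xi_2}^2 = (AB - q^{-2} BA)^2$ simplifies to precisely $(A B A B - B A B A) + q^{-2} (B^2 A^2 - B A^2 B)$. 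Comparing the two expressions identifies the bracket in the former as $E_{\xi_2}^2$ and gives $c = Q q / [2]_q$, which is the claimed coefficient. The main obstacle is purely the algebraic bookkeeping in this last step: no conceptual subtlety arises, but one must carefully track several terms through the Serre relation. As a sanity check on the coefficient one can exploit the $\Uqlss$-equivariance of $\embU$ (the elements $E_{\xi_1}, E_{\xi_2}, E_{\xi_3}$ transform as a spin-one irrep under $\Uqlss = U_q(\mathfrak{sl}_2)$), which in principle determines the three relations from a single one via the adjoint action of $E_1$ and $F_1$.
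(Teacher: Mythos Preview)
Your proposal is correct and follows the same overall strategy as the paper: the first two relations are immediate from \eqref{eq:comm-rel-uqg} since the intermediate index set is empty, and for the third relation the weight argument reduces the problem to determining a single scalar $c$.

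The only difference is in how $c$ is computed. You expand everything in the Chevalley generators $E_1, E_2$ and use the Serre relation $E_1 E_2^2 - [2]_{q^2} E_2 E_1 E_2 + E_2^2 E_1 = 0$ to normalise both $[E_{\xi_1}, E_{\xi_3}]$ and $E_{\xi_2}^2$; your intermediate expressions are correct and the identification $c = Qq/[2]_q$ goes through. The paper instead stays at the quantum-root-vector level: it writes $[2]_q E_{\beta_2} = E_{\beta_1} E_{\beta_3} - E_{\beta_3} E_{\beta_1}$, then moves $E_{\beta_4}$ past $E_{\beta_1}$ and $E_{\beta_3}$ using the other instances of \eqref{eq:comm-rel-uqg} (namely $E_{\beta_1} E_{\beta_4} - q^{-2} E_{\beta_4} E_{\beta_1} = E_{\beta_3}$ and $E_{\beta_3} E_{\beta_4} = q^2 E_{\beta_4} E_{\beta_3}$), obtaining $[2]_q [E_{\beta_2}, E_{\beta_4}] = Q q\, E_{\beta_3}^2$ directly. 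The paper's route is a bit shorter because it never unpacks $E_{\beta_2}$ into generators, whereas yours is more self-contained in that it uses only the defining relations of $\Uqg$; either is perfectly adequate here.
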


\begin{proof}
Using the general commutation relations given in \eqref{eq:comm-rel-uqg}, we immediately obtain the first two identities. For the third case, we must have $E_{\xi_1} E_{\xi_3} - E_{\xi_3} E_{\xi_1} = c E_{\xi_2}^2$ by weight reasons.
The constant $c$ must be determined by explicit computation. We compute
\[
\begin{split}
[2]_q E_{\beta_4} E_{\beta_2} & = E_{\beta_4} E_{\beta_1} E_{\beta_3} - E_{\beta_4} E_{\beta_3} E_{\beta_1} \\
& = q^2 E_{\beta_1} E_{\beta_4} E_{\beta_3} - q^2 E_{\beta_3}^2 - q^{-2} E_{\beta_3} E_{\beta_4} E_{\beta_1} \\
& = E_{\beta_1} E_{\beta_3} E_{\beta_4} - q^2 E_{\beta_3}^2 - E_{\beta_3} E_{\beta_1} E_{\beta_4} + E_{\beta_3}^2 \\
& = [2]_q E_{\beta_2} E_{\beta_4} - (q - q^{-1}) q E_{\beta_3}^2.
\end{split}
\]
From this we conclude that
\[
[E_{\xi_1}, E_{\xi_3}] = [E_{\beta_2}, E_{\beta_4}] = (q - q^{-1}) \frac{q}{[2]_q} E_{\beta_3}^2 = Q \frac{q}{[2]_q} E_{\xi_2}^2. \qedhere
\]
\end{proof}

\subsection{Action of $\Uql$ on $\lieup$}

Recall that $\Uqg$ acts on itself by the (left) adjoint action, which here we denote by $X \triangleright Y = X_{(1)} Y S(X_{(2)})$. We restrict this to an action of the quantized Levi factor $\Uql \subset \Uqg$ and compute this action on the nilradical $\lieup \subset \Uqg$.

\begin{lemma}
\label{lem:levi-up}
The action of $\Uql$ on $\lieup$ is given by
\[
\begin{gathered}
K_1 \triangleright E_{\xi_1} = q^2 E_{\xi_1}, \quad
K_1 \triangleright E_{\xi_2} = E_{\xi_2}, \quad
K_1 \triangleright E_{\xi_3} = q^{-2} E_{\xi_3}, \\
K_2 \triangleright E_{\xi_1} = E_{\xi_1}, \quad
K_2 \triangleright E_{\xi_2} = q^2 E_{\xi_2}, \quad
K_2 \triangleright E_{\xi_3} = q^4 E_{\xi_3}, \\
E_1 \triangleright E_{\xi_1} = 0, \quad
E_1 \triangleright E_{\xi_2} = [2]_q E_{\xi_1}, \quad
E_1 \triangleright E_{\xi_3} = E_{\xi_2}, \\
F_1 \triangleright E_{\xi_1} = E_{\xi_2}, \quad
F_1 \triangleright E_{\xi_2} = [2]_q E_{\xi_3}, \quad
F_1 \triangleright E_{\xi_3} = 0.
\end{gathered}
\]
\end{lemma}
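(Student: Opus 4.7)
My plan is to compute each of the twelve actions using the explicit form of the adjoint action, exploiting weight considerations to keep the bookkeeping minimal. Throughout I will treat $E_{\xi_j}$ as the weight vector of weight $\xi_j$ recorded in \cref{sec:lagrangian} (namely $\xi_1 = 2\alpha_1 + \alpha_2$, $\xi_2 = \alpha_1 + \alpha_2$, $\xi_3 = \alpha_2$) and use the explicit expressions for $E_{\xi_1}, E_{\xi_2}, E_{\xi_3}$ given by \cref{lem:root-E}.

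First I would dispatch the Cartan actions. Since $\Delta(K_i) = K_i \otimes K_i$ and $S(K_i) = K_i^{-1}$, the adjoint action is conjugation, so $K_i \triangleright E_{\xi_j} = q^{(\alpha_i, \xi_j)} E_{\xi_j}$. Plugging in the six pairings yields the six $K_i$-formulas immediately. Next, for $E_1$, I use $\Delta(E_1) = E_1 \otimes 1 + K_1 \otimes E_1$ and $S(K_1) = K_1^{-1}$ to obtain the $q$-commutator
\[
E_1 \triangleright Y \;=\; E_1 Y - q^{(\alpha_1, \mathrm{wt}(Y))} Y E_1
\]
for any weight vector $Y$. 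Applied to $E_{\xi_3} = E_2$ this gives $E_1 E_2 - q^{-2} E_2 E_1$, which is $E_{\xi_2}$ by definition. Applied to $E_{\xi_2}$ (weight $\alpha_1+\alpha_2$, so the twist is trivial) I expand $[E_1, E_{\xi_2}]$ and match against the formula for $E_{\xi_1}$ from \cref{lem:root-E} to recognize the result as $[2]_q E_{\xi_1}$. Finally, $E_1 \triangleright E_{\xi_1}$ would be a weight vector of weight $3\alpha_1+\alpha_2$; since this weight is not a root of $\mathfrak{sp}_4$, and $\Uqg$ has no nonzero element of this weight lying in the subspace $\ad(\Uql)(\lieup)$, it must vanish. (As a backup, this can also be checked by direct expansion using the quantum Serre relation $E_1^3 E_2 - [3]_q E_1^2 E_2 E_1 + [3]_q E_1 E_2 E_1^2 - E_2 E_1^3 = 0$.)

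For the action of $F_1$, I use $\Delta(F_1) = F_1 \otimes K_1^{-1} + 1 \otimes F_1$, $S(K_1^{-1}) = K_1$, and $S(F_1) = -F_1 K_1$ to obtain $F_1 \triangleright Y = [F_1, Y] K_1$. Since $[F_1, E_2] = 0$, this immediately yields $F_1 \triangleright E_{\xi_3} = 0$. For $F_1 \triangleright E_{\xi_2}$ and $F_1 \triangleright E_{\xi_1}$, the commutators $[F_1, \cdot]$ expand via the Leibniz rule and the relation $[F_1, E_1] = (K_1 - K_1^{-1})/Q$; the resulting Cartan factors are rearranged using $K_1 E_1 = q^2 E_1 K_1$ and then absorbed into the trailing $K_1$, producing pure polynomials in $E_1, E_2$ on the right. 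Comparing these with the formulas in \cref{lem:root-E} identifies the outputs as $[2]_q E_{\xi_3}$ and $E_{\xi_2}$ respectively.

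The main obstacle is the last calculation, $F_1 \triangleright E_{\xi_1}$, which involves three length-three monomials and several intermediate Cartan rearrangements; it is the only step where the algebra is non-trivial rather than a one-line identification. A shortcut I would use to avoid brute force is the following: combining Step I with $E_1 \triangleright E_{\xi_1} = 0$ and $K_1 \triangleright E_{\xi_1} = q^2 E_{\xi_1}$, the highest-weight theory for $\Uqlss = U_q(\mathfrak{sl}_2)$ guarantees that $E_{\xi_1}$ generates a simple module isomorphic to the spin-one representation, whose structure constants are fixed up to normalization. Since $F_1 \triangleright E_{\xi_1}$ and $F_1^2 \triangleright E_{\xi_1}$ must then be scalar multiples of $E_{\xi_2}$ and $E_{\xi_3}$, it suffices to pin down a single leading PBW coefficient in each, which reduces the remaining work to extracting one coefficient per formula.
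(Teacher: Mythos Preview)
Your proposal is correct and largely parallels the paper's proof. For the Cartan elements and the $E_1$-action you do essentially what the paper does (the paper gets $E_1 \triangleright E_{\xi_1} = 0$ directly from the general PBW commutation relation \eqref{eq:comm-rel-uqg} with $j=1$, $k=2$, rather than via a weight or Serre argument, but this is cosmetic; note also the small sign slip $[F_1,E_1]=-(K_1-K_1^{-1})/Q$ in your write-up).

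The one genuine tactical difference is in the $F_1$-action on $E_{\xi_2}$ and $E_{\xi_1}$. Rather than a direct PBW expansion or an abstract highest-weight argument followed by coefficient matching, the paper bootstraps from the already-computed $E_1$-action: since $E_{\xi_2} = E_1 \triangleright E_{\xi_3}$ and $[2]_q E_{\xi_1} = E_1 \triangleright E_{\xi_2}$, one writes $F_1 \triangleright E_{\xi_2} = (F_1 E_1) \triangleright E_{\xi_3}$, replaces $F_1 E_1$ by $E_1 F_1 - (K_1 - K_1^{-1})/Q$ acting in the adjoint representation, and uses $F_1 \triangleright E_{\xi_3} = 0$ to reduce everything to a Cartan action; the computation for $E_{\xi_1}$ then cascades from this. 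This is really your highest-weight shortcut made completely concrete, and it removes the need for any PBW coefficient extraction at all. Either route is fine; the paper's is a bit cleaner in execution.
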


\begin{proof}
The action of the Cartan elements is obtained immediately. Next observe that in our conventions we have $E_i \triangleright E_{\beta_j} = E_i E_{\beta_j} - q^{(\alpha_i, \beta_j)} E_{\beta_j} E_i$.
Then it follows from the commutation relations \eqref{eq:comm-rel-uqg} and the expressions given in \cref{lem:root-E} that we have
\[
E_{\beta_1} \triangleright E_{\beta_2} = 0, \quad
E_{\beta_1} \triangleright E_{\beta_3} = [2]_q E_{\beta_2}, \quad
E_{\beta_1} \triangleright E_{\beta_4} = E_{\beta_3}.
\]
Next using $F_i \triangleright E_{\beta_j} = [F_i, E_{\beta_j}] K_i$ it is clear that $F_1 \triangleright E_{\beta_4} = 0$. Then we have
\[
F_1 \triangleright E_{\beta_3} = F_1 E_1 \triangleright E_{\beta_4} = - \frac{K_1 - K_1^{-1}}{q - q^{-1}} \triangleright E_{\beta_4} = [2]_q E_{\beta_4},
\]
where we have used that $F_1 \triangleright E_{\beta_4} = 0$. Finally we have
\[
F_1 \triangleright E_{\beta_2} = \frac{1}{[2]_q} F_1 E_1 \triangleright E_{\beta_3} = \frac{1}{[2]_q} E_1 F_1 \triangleright E_{\beta_3} = E_1 \triangleright E_{\beta_4} = E_{\beta_3},
\]
where we have used $(K_1 - K_1^{-1}) \triangleright E_{\beta_3} = 0$, which follows from $(\alpha_1, \beta_3) = 0$.
\end{proof}

In the case under consideration we have $\mathfrak{l} = \mathfrak{gl}_2$ with semi-simple part $\mathfrak{l}_{ss} = \mathfrak{sl}_2$. We can identify $\lieup$ with the $U_q(\mathfrak{sl}_2)$-module of highest weight $2 \omega_1$, that is the adjoint representation.

\subsection{Relations for $\Lq$}

Recall that we identify $\Sq$ with the algebra generated by the quantum root vectors $\{ E_{\xi_i} \}_{i = 1}^3$ in $\Uqg$. Then we will identify $\Lq$ with the quadratic dual of the latter algebra.
We denote by $\{ y_i \}_{i = 1}^3$ the dual basis to $\{ E_{\xi_i} \}_{i = 1}^3$ with respect to the dual pairing $\langle \cdot, \cdot \rangle : \lieup \otimes \lieum \to \mathbb{C}$, which is extended to $\lieup^{\otimes 2} \otimes \lieum^{\otimes 2}$ by the formula
\[
\langle x \otimes x^\prime, y \otimes y^\prime \rangle = \langle x, y^\prime \rangle \langle x^\prime, y \rangle, \quad
x, x^\prime \in \lieup, \ y, y^\prime \in \lieum.
\]

\begin{proposition}
The algebra $\Lq$ has generators $\{ y_i \}_{i = 1}^3$ and relations
\[
\begin{gathered}
y_1 \wedge y_1 = 0, \quad
y_2 \wedge y_2 = Q \frac{q}{[2]_q} y_1 \wedge y_3, \quad
y_3 \wedge y_3 = 0, \\
y_1 \wedge y_2 = -q^2 y_2 \wedge y_1, \quad
y_1 \wedge y_3 = -y_3 \wedge y_1, \quad
y_2 \wedge y_3 = -q^2 y_3 \wedge y_2.
\end{gathered}
\]
\end{proposition}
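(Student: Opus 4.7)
The plan is to apply quadratic duality directly. By construction $\Lq$ is identified with the dual algebra $\Sq^!$, presented as $T(\lieum)/R^{\perp}$, where $R \subset \lieup^{\otimes 2}$ is the span of the quadratic relations of $\Sq$ from the preceding proposition and $R^{\perp} \subset \lieum^{\otimes 2}$ is the orthogonal complement with respect to the pairing $\langle x \otimes x', y \otimes y' \rangle = \langle x, y' \rangle \langle x', y \rangle$ fixed earlier in the paper. Using that $\{E_{\xi_i}\}$ and $\{y_i\}$ are dual bases, one has $\langle E_{\xi_i} \otimes E_{\xi_j},\, y_k \otimes y_l \rangle = \delta_{il}\delta_{jk}$, so an element $\sum_{k,l} d_{kl}\, y_k \otimes y_l$ lies in $R^{\perp}$ exactly when $\sum_{i,j} c_{ij}\, d_{ji} = 0$ for each relation $\sum_{i,j} c_{ij}\, E_{\xi_i} \otimes E_{\xi_j}$ of $\Sq$.

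Next I would read off the three linear conditions on the $d_{kl}$. The relation $E_{\xi_1} E_{\xi_2} = q^2 E_{\xi_2} E_{\xi_1}$ gives $d_{21} = q^2 d_{12}$; the relation $E_{\xi_2} E_{\xi_3} = q^2 E_{\xi_3} E_{\xi_2}$ gives $d_{32} = q^2 d_{23}$; and the third relation $E_{\xi_1} E_{\xi_3} - E_{\xi_3} E_{\xi_1} = Q \tfrac{q}{[2]_q} E_{\xi_2}^2$ gives $d_{31} - d_{13} - Q \tfrac{q}{[2]_q} d_{22} = 0$. These three independent conditions cut the nine-dimensional space $\lieum^{\otimes 2}$ down to a six-dimensional $R^{\perp}$, as expected.

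The final step is to exhibit a basis of $R^{\perp}$ matching the six relations in the proposition. A direct verification shows that the vectors $y_1 \otimes y_1$, $y_3 \otimes y_3$, $y_1 \otimes y_2 + q^2\, y_2 \otimes y_1$, $y_2 \otimes y_3 + q^2\, y_3 \otimes y_2$, $y_1 \otimes y_3 + y_3 \otimes y_1$, and $y_2 \otimes y_2 - Q \tfrac{q}{[2]_q}\, y_1 \otimes y_3$ all satisfy the three linear constraints, and their linear independence is immediate from inspection of the monomials that appear (only $y_1 \otimes y_3$ occurs in two of them, but the accompanying $y_2 \otimes y_2$ term distinguishes them). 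Projecting to $T(\lieum)/R^{\perp}$ yields precisely the six relations claimed. The only genuine subtlety in the whole argument is the asymmetric pairing, whose index swap---precisely the opposite-algebra convention noted earlier in the paper---must be tracked carefully through the computation; once that bookkeeping is pinned down, the rest is elementary linear algebra in a nine-dimensional ambient space.
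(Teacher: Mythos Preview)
Your proof is correct and follows essentially the same approach as the paper: both compute $R^\perp$ with respect to the swapped pairing $\langle x\otimes x', y\otimes y'\rangle = \langle x,y'\rangle\langle x',y\rangle$ and then exhibit the same six generators, the only difference being that you organize the computation via the coefficient matrix $(d_{kl})$ and its three linear constraints, whereas the paper checks each candidate element against the relations one at a time.
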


\begin{proof}
The algebra $\Sq$ has the relations $R = \mathrm{span} \{ X_1, X_2, X_3 \} \subset \lieup \otimes \lieup$, where
\[
\begin{gathered}
X_1 = x_1 \otimes x_2 - q^2 x_2 \otimes x_1, \quad
X_2 = x_2 \otimes x_3 - q^2 x_3 \otimes x_2, \\
X_3 = x_1 \otimes x_3 - x_3 \otimes x_1 - Q \frac{q}{[2]_q} x_2 \otimes x_2.
\end{gathered}
\]
We need to determine the subspace $R^\perp \subset \lieum \otimes \lieum$ such that $\langle R, R^\perp \rangle = 0$. Since $\langle x_i, y_j \rangle = \delta_{i j}$, it is clear that $y_1 \otimes y_1 \in R^\perp$ and $y_3 \otimes y_3 \in R^\perp$. On the other hand we claim that
\[
Y = y_2 \otimes y_2 - Q \frac{q}{[2]_q} y_1 \otimes y_3 \in R^\perp.
\]
It is clear that the only possibly non-zero pairing is $\langle X_3, Y \rangle$. We have
\[
\begin{split}
\langle X_3, Y \rangle & = \langle - Q \frac{q}{[2]_q} x_2 \otimes x_2, y_2 \otimes y_2 \rangle + \langle - x_3 \otimes x_1, - Q \frac{q}{[2]_q} y_1 \otimes y_3 \rangle \\
& = - Q \frac{q}{[2]_q} + Q \frac{q}{[2]_q} = 0.
\end{split}
\]
By similar computations one shows that the elements $y_1 \otimes y_2 + q^2 y_2 \otimes y_1$, $y_1 \otimes y_3 + y_3 \otimes y_1$ and $y_2 \otimes y_3 + q^2 y_3 \otimes y_2$ all belong to $R^\perp$. The above elements span $R^\perp$ by dimensional reasons.
\end{proof}

\subsection{Action of $\Uql$ on $\lieum$}

The vector space $\lieum$ carries a natural action of $\Uql$, being defined as the dual of $\lieup$ in terms of the $\Uql$-invariant pairing $\langle \cdot, \cdot \rangle : \lieup \otimes \lieum \to \mathbb{C}$.

\begin{lemma}
\label{lem:levi-um}
The action of $\Uql$ on $\lieum$ is given by
\[
\begin{gathered}
K_1 y_1 = q^{-2} y_1, \quad
K_1 y_2 = y_2, \quad
K_1 y_3 = q^2 y_3, \\
K_2 y_1 = y_1, \quad
K_2 y_2 = q^{-2} y_2, \quad
K_2 y_3 = q^{-4} y_3, \\
E_1 y_1 = - [2]_q y_2, \quad
E_1 y_2 = - q^2 y_3, \quad
E_1 y_3 = 0, \\
F_1 y_1 = 0, \quad
F_1 y_2 = - y_1, \quad
F_1 y_3 = - [2]_q q^{-2} y_2.
\end{gathered}
\]
\end{lemma}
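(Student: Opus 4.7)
The statement is about the dual action on $\lieum$, which is defined by $(X y)(x) = y(S^{-1}(X) \triangleright x)$ where $\triangleright$ denotes the adjoint action of $\Uql$ on $\lieup \subset \Uqg$. So the plan is to reduce each identity in \cref{lem:levi-um} to the corresponding action on $\lieup$, which is already computed in \cref{lem:levi-up}, by applying $S^{-1}$ to the Hopf-algebra generators and pairing against the basis $\{ x_i \}$ dual to $\{ y_i \}$.

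First I would handle the Cartan elements. Since $S^{-1}(K_i) = K_i^{-1}$, one has $\langle x_j, K_i y_k \rangle = \langle K_i^{-1} \triangleright x_j, y_k \rangle$, so the eigenvalues of $K_i$ on $\lieum$ are the inverses of the eigenvalues of $K_i$ on $\lieup$ given in \cref{lem:levi-up}. This immediately yields the three rows for $K_1$ and $K_2$.

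Next I would treat $E_1$ and $F_1$. From the defining relations $S(E_1) = -K_1^{-1} E_1$ and $S(F_1) = -F_1 K_1$, a short computation using $S \circ S^{-1} = \id$ gives
\[
S^{-1}(E_1) = -E_1 K_1^{-1}, \qquad S^{-1}(F_1) = -K_1 F_1.
\]
Therefore
\[
(E_1 y_j)(x_i) = -y_j\bigl(E_1 \triangleright (K_1^{-1} \triangleright x_i)\bigr), \qquad (F_1 y_j)(x_i) = -y_j\bigl(K_1 \triangleright (F_1 \triangleright x_i)\bigr).
\]
I would then run through the six cases $(i, j)$ that can possibly be nonzero, combining the weight action of $K_1^{\pm 1}$ on the $x_i$ with the formulas $E_1 \triangleright x_2 = [2]_q x_1$, $E_1 \triangleright x_3 = x_2$, $F_1 \triangleright x_1 = x_2$, $F_1 \triangleright x_2 = [2]_q x_3$ from \cref{lem:levi-up}, and read off the coefficient on the dual basis vector. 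For instance, for $E_1 y_1$ only $x_2$ contributes: $(E_1 y_1)(x_2) = -\langle [2]_q x_1, y_1 \rangle = -[2]_q$, giving $E_1 y_1 = -[2]_q y_2$; and analogously for the remaining entries.

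No step is genuinely difficult here; the only place where a sign or $q$-power could be miscopied is in verifying $S^{-1}$ on $E_1$ and $F_1$ and in tracking the weight factors from $K_1^{\pm 1} \triangleright x_i$ (particularly the factor $q^{-2}$ appearing in $F_1 y_3$, which comes from $K_1 \triangleright x_3 = q^{-2} x_3$). Thus the proof reduces to a direct bookkeeping check against \cref{lem:levi-up}, and a one-line remark will suffice.
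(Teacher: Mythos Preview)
Your proposal is correct and is essentially the same argument as the paper's: both reduce the computation to the known action of $\Uql$ on $\lieup$ from \cref{lem:levi-up} via the dual pairing. The only cosmetic difference is that the paper phrases the duality through the $\Uql$-invariance of $\langle \cdot,\cdot\rangle$ (e.g.\ $\langle E_1 x_i, y_j \rangle + \langle K_1 x_i, E_1 y_j \rangle = 0$), whereas you apply the equivalent formula $(Xy)(x)=y(S^{-1}(X)\triangleright x)$ directly; unwinding either one gives exactly the same coefficients.
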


\begin{proof}
Since the dual pairing $\langle \cdot, \cdot \rangle : \lieup \otimes \lieum \to \mathbb{C}$ is $\Uql$-invariant, we get $\langle K_k x_i, K_k y_j \rangle = \langle x_i, y_j \rangle$, which gives the expression for the Cartan elements. For $E_1$ and $F_1$ we have
\[
\langle E_1 x_i, y_j \rangle + \langle K_1 x_i, E_1 y_j \rangle = 0, \quad
\langle F_1 x_i, K_1^{-1} y_j \rangle + \langle x_i, F_1 y_j \rangle = 0.
\]
Using the formulae from \cref{lem:levi-up} we easily get the result.
\end{proof}

\section{Square of the Dolbeault--Dirac element}
\label{sec:square}

In this section we will obtain an explicit expression for $D^2$, the square of the Dolbeault--Dirac element. We will derive this expression up to terms in the quantized Levi factor $\Uql$.

\subsection{Some commutation relations}

To simplify $D^2$, we will need commutation relations between the quantum root vectors $E_{\xi_i}$ and $E_{\xi_j}^*$. For convenience, we will derive these modulo elements in the quantized Levi factor $\Uql$, using the following notation.

\begin{definition}
\label{def:equiv-levi}
We define an equivalence relation $\sim$ on $\Uqg$ as follows: we will write $X \sim Y$ if $X - Y = Z$ for some $Z \in \Uql$.
\end{definition}

Observe that if $X \sim Y$ then $X^* \sim Y^*$, and also $Z \triangleright X \sim Z \triangleright Y$ for all $Z \in \Uql$. These properties follow from the fact that $\Uql$ is a Hopf $*$-subalgebra of $\Uqg$.

\begin{lemma}
\label{lem:rel-xi-xis}
We have the relations
\[
\begin{gathered}
E_{\xi_1} E_{\xi_1}^* \sim q^{-4} E_{\xi_1}^* E_{\xi_1} - Q q^{-2} E_{\xi_2}^* E_{\xi_2} + Q^2 [2]_q q^{-3} E_{\xi_3}^* E_{\xi_3}, \\
E_{\xi_2} E_{\xi_2}^* \sim q^{-2} E_{\xi_2}^* E_{\xi_2} - Q [2]_q^2 q^{-4} E_{\xi_3}^* E_{\xi_3}, \quad
E_{\xi_3} E_{\xi_3}^* \sim q^{-4} E_{\xi_3}^* E_{\xi_3}, \\
E_{\xi_1} E_{\xi_2}^* \sim q^{-2} E_{\xi_2}^* E_{\xi_1} - Q [2]_q q^{-2} E_{\xi_3}^* E_{\xi_2}, \quad
E_{\xi_1} E_{\xi_3}^* \sim E_{\xi_3}^* E_{\xi_1}, \quad
E_{\xi_2} E_{\xi_3}^* \sim q^{-2} E_{\xi_3}^* E_{\xi_2}.
\end{gathered}
\]
\end{lemma}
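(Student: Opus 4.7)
The plan is to reduce all six relations to a single direct computation, by exploiting the fact that the equivalence $\sim$ is preserved under the adjoint action of $\Uql$, together with the fact that $X \sim Y$ implies $X^* \sim Y^*$.

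\emph{Base case.} Since $E_{\xi_3} = E_2$ and $E_{\xi_3}^* = F_2 K_2$, a direct application of $[E_2, F_2] = (K_2 - K_2^{-1})/(q_2 - q_2^{-1})$ gives
\[
E_{\xi_3} E_{\xi_3}^* = F_2 E_2 K_2 + \tfrac{K_2^2 - 1}{q_2 - q_2^{-1}}.
\]
The second summand lies in $\Uql$, while $F_2 E_2 K_2 = q^{-4} E_{\xi_3}^* E_{\xi_3}$ after moving $K_2$ past $E_2$, producing the base case $E_{\xi_3} E_{\xi_3}^* \sim q^{-4} E_{\xi_3}^* E_{\xi_3}$.

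\emph{Action on the dual side.} Starting from $E_1^* = F_1 K_1$ together with the explicit forms $\ad(E_1)(Y) = E_1 Y - (K_1 Y K_1^{-1}) E_1$ and $\ad(F_1)(Y) = [F_1, Y] K_1$, a short manipulation yields the identities $\ad(E_1)(Y^*) = -(\ad(F_1)(Y))^*$ and $\ad(F_1)(Y^*) = -(\ad(E_1)(Y))^*$. Combined with \cref{lem:levi-up}, this determines $\ad(E_1)(E_{\xi_i}^*)$ and $\ad(F_1)(E_{\xi_i}^*)$ explicitly.

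\emph{Iteration.} Since $E_1, F_1 \in \Uql$, applying $\ad(E_1)$ or $\ad(F_1)$ preserves $\sim$. Using the Leibniz rule for the adjoint action with $\Delta(E_1) = E_1 \otimes 1 + K_1 \otimes E_1$ and $\Delta(F_1) = F_1 \otimes K_1^{-1} + 1 \otimes F_1$, I apply these derivations to the base case and to each subsequently derived identity. Two applications of $\ad(E_1)$ produce $E_{\xi_2} E_{\xi_3}^* \sim q^{-2} E_{\xi_3}^* E_{\xi_2}$ and $E_{\xi_1} E_{\xi_3}^* \sim E_{\xi_3}^* E_{\xi_1}$. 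Applying $\ad(F_1)$ to these (and substituting previously derived identities to simplify the resulting lower-weight terms) then yields in succession the relations for $E_{\xi_2} E_{\xi_2}^*$, $E_{\xi_1} E_{\xi_2}^*$, and finally $E_{\xi_1} E_{\xi_1}^*$. Any remaining identity is obtained by taking $*$ of one already proved.

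The bulk of the work is bookkeeping the Cartan factors $\ad(K_1^{\pm 1})(Y) = q^{\mp(\alpha_1, \mathrm{wt}(Y))} Y$ produced by each Leibniz expansion. The genuinely delicate step is the $E_{\xi_1} E_{\xi_1}^*$ relation, whose cubic coefficient $Q^2 [2]_q q^{-3}$ in front of $E_{\xi_3}^* E_{\xi_3}$ only emerges after substituting both the $E_{\xi_2} E_{\xi_2}^*$ and $E_{\xi_1} E_{\xi_2}^*$ identities into the output of $\ad(F_1)(E_{\xi_1} E_{\xi_2}^*)$; checking that the scalars collapse correctly (via the identity $q^{-4} - 1 = -q^{-2} Q [2]_q$ applied twice) is where a sign or factor error is most likely.
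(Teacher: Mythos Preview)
Your proposal is correct and follows essentially the same strategy as the paper: establish the relations involving $E_{\xi_3}^*$ as a base case and then propagate via the adjoint action of $\Uql$ (using the identities $\ad(E_1)(Y^*) = -(\ad(F_1)Y)^*$, $\ad(F_1)(Y^*) = -(\ad(E_1)Y)^*$ together with \cref{lem:levi-up}). The only cosmetic difference is that the paper takes all three relations $E_{\xi_i} E_{\xi_3}^* \sim q^{-(\xi_i,\xi_3)} E_{\xi_3}^* E_{\xi_i}$ as the starting point (invoking \cite[Lemma~2]{mat-par}) and then iterates with $\ad(F_1)$ alone, whereas you start from the single relation $E_{\xi_3} E_{\xi_3}^*$ and use $\ad(E_1)$ to reach the other two before descending with $\ad(F_1)$; your version is slightly more self-contained, but the substance is identical.
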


\begin{proof}
Since $\xi_3 = \alpha_2$, it is easy to show that $E_{\xi_i} E_{\xi_3}^* \sim q^{-(\xi_i, \xi_3)} E_{\xi_3}^* E_{\xi_i}$, see for instance \cite[Lemma 2]{mat-par}. To obtain the other relations we use the adjoint action of $\Uql$. We have
\begin{equation}
\label{eq:actf-xy}
\begin{split}
F_i \triangleright (X Y^*) & = (F_i \triangleright X) (K_i \triangleright Y)^* - X (E_i \triangleright Y)^*, \\
F_i \triangleright (Y^* X) & = - (E_i \triangleright Y)^* (K_i^{-1} \triangleright X) + Y^* (F_i \triangleright X),
\end{split}
\end{equation}
where we have used $X \triangleright Y^* = (S(X)^* \triangleright Y)^*$ and $S(F_i)^* = - E_i$. From \eqref{eq:actf-xy} we get
\[
\begin{split}
F_1 \triangleright (E_{\xi_i} E_{\xi_3}^*) & = q^{-2} (F_1 \triangleright E_{\xi_i}) E_{\xi_3}^* - E_{\xi_i} E_{\xi_2}^*, \\
F_1 \triangleright (E_{\xi_3}^* E_{\xi_i}) & = - q^{-(\alpha_1, \xi_i)} E_{\xi_2}^* E_{\xi_i} + E_{\xi_3}^* (F_1 \triangleright E_{\xi_i}).
\end{split}
\]
Applying $F_1$ to $E_{\xi_i} E_{\xi_3}^* \sim q^{-(\alpha_2, \xi_i)} E_{\xi_3}^* E_{\xi_i}$ we get, after some simple manipulations,
\[
E_{\xi_i} E_{\xi_2}^* - q^{-(\xi_2, \xi_i)} E_{\xi_2}^* E_{\xi_i} \sim q^{-2} (F_1 \triangleright E_{\xi_i}) E_{\xi_3}^*  - q^{-(\alpha_2, \xi_i)} E_{\xi_3}^* (F_1 \triangleright E_{\xi_i}).
\]
Consider the case $i = 1$. Since $(\xi_2, \xi_1) = 2$ we get
\[
E_{\xi_1} E_{\xi_2}^* - q^{-2} E_{\xi_2}^* E_{\xi_1} \sim q^{-2} E_{\xi_2} E_{\xi_3}^* - E_{\xi_3}^* E_{\xi_2} \sim (q^{-4} - 1) E_{\xi_3}^* E_{\xi_2}.
\]
Observe that $q^{-4} - 1 = - Q [2]_q q^{-2}$. Next consider $i = 2$. Since $(\xi_2, \xi_2) = 2$ we get
\[
E_{\xi_2} E_{\xi_2}^* - q^{-2} E_{\xi_2}^* E_{\xi_2} \sim [2]_q q^{-2} E_{\xi_3} E_{\xi_3}^* - [2]_q q^{-2} E_{\xi_3}^* E_{\xi_3} \sim [2]_q q^{-2} (q^{-4} - 1) E_{\xi_3}^* E_{\xi_3}.
\]

We are left with the relation for $E_{\xi_1} E_{\xi_1}^*$. From the formulae in \eqref{eq:actf-xy} we obtain
\[
\begin{split}
F_1 \triangleright (E_{\xi_1} E_{\xi_2}^*) & = E_{\xi_2} E_{\xi_2}^* - [2]_q E_{\xi_1} E_{\xi_1}^*, \\
F_1 \triangleright (E_{\xi_2}^* E_{\xi_1}) & = - [2]_q q^{-2} E_{\xi_1}^* E_{\xi_1} + E_{\xi_2}^* E_{\xi_2}, \\
F_1 \triangleright (E_{\xi_3}^* E_{\xi_2}) & = - E_{\xi_2}^* E_{\xi_2} + [2]_q E_{\xi_3}^* E_{\xi_3}.
\end{split}
\]
Using these expressions we compute
\[
\begin{split}
0 & \sim F_1 \triangleright (E_{\xi_1} E_{\xi_2}^* - q^{-2} E_{\xi_2}^* E_{\xi_1} + Q [2]_q q^{-2} E_{\xi_3}^* E_{\xi_2}) \\
& = (E_{\xi_2} E_{\xi_2}^* - q^{-2} E_{\xi_2}^* E_{\xi_2}) - [2]_q (E_{\xi_1} E_{\xi_1}^* - q^{-4} E_{\xi_1}^* E_{\xi_1}) - Q [2]_q q^{-2} E_{\xi_2}^* E_{\xi_2} + Q [2]_q^2 q^{-2} E_{\xi_3}^* E_{\xi_3} \\
& \sim - Q [2]_q^2 q^{-4} E_{\xi_3}^* E_{\xi_3} - [2]_q (E_{\xi_1} E_{\xi_1}^* - q^{-4} E_{\xi_1}^* E_{\xi_1}) - Q [2]_q q^{-2} E_{\xi_2}^* E_{\xi_2} + Q [2]_q^2 q^{-2} E_{\xi_3}^* E_{\xi_3}.
\end{split}
\]
Finally this can be rewritten as
\[
E_{\xi_1} E_{\xi_1}^* \sim q^{-4} E_{\xi_1}^* E_{\xi_1} - Q q^{-2} E_{\xi_2}^* E_{\xi_2} + Q^2 [2]_q q^{-3} E_{\xi_3}^* E_{\xi_3}. \qedhere
\]
\end{proof}

\subsection{Computing $D^2$}

We are now ready to compute the square of the Dolbeault--Dirac element $D = \dolb + \dolb^*$, as given in \cref{def:dolb-dirac}. The result will be an expression of the form $D^2 \sim \sum_{i, j = 1}^3 E_{\xi_i}^* E_{\xi_j} \otimes \Gamma_{i j}$, where $\Gamma_{i j}$ are some explicit elements of $\mathrm{End}(\Lq)$.

\begin{proposition}
\label{prop:D-squared}
We have $D^2 \sim \sum_{i, j = 1}^3 E_{\xi_i}^* E_{\xi_j} \otimes \Gamma_{i j}$, where
\[
\begin{split}
\Gamma_{1 1} & := \gamma(y_1)^* \gamma(y_1) + q^{-4} \gamma(y_1) \gamma(y_1)^*, \\
\Gamma_{2 2} & := \gamma(y_2)^* \gamma(y_2) + q^{-2} \gamma(y_2) \gamma(y_2)^* - \qdiff q^{-2} \gamma(y_1) \gamma(y_1)^*, \\
\Gamma_{3 3} & := \gamma(y_3)^* \gamma(y_3) + q^{-4} \gamma(y_3) \gamma(y_3)^* + \qdiff^2 [2]_q q^{-3} \gamma(y_1) \gamma(y_1)^* - \qdiff [2]_q^2 q^{-4} \gamma(y_2) \gamma(y_2)^*,
\end{split}
\]
while in the case $i < j$ we have
\[
\begin{split}
\Gamma_{1 2} & := \gamma(y_1)^* \gamma(y_2) + q^{-2} \gamma(y_2) \gamma(y_1)^*, \\
\Gamma_{1 3} & := \gamma(y_1)^* \gamma(y_3) + \gamma(y_3) \gamma(y_1)^*, \\
\Gamma_{2 3} & := \gamma(y_2)^* \gamma(y_3) + q^{-2} \gamma(y_3) \gamma(y_2)^* - \qdiff [2]_q q^{-2} \gamma(y_2) \gamma(y_1)^*,
\end{split}
\]
and finally in the case $i > j$ we have $\Gamma_{i j} := \Gamma_{j i}^*$.
\end{proposition}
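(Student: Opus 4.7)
The plan is to expand $D^2$ directly and use the commutation relations of \cref{lem:rel-xi-xis} to bring every term into the stated form. Since $\dolb^2 = 0$ by \cref{lem:canonical-square}, taking adjoints gives $(\dolb^*)^2 = 0$, so
\[
D^2 = \dolb \dolb^* + \dolb^* \dolb.
\]
The second summand $\dolb^* \dolb = \sum_{i,j} E_{\xi_i}^* E_{\xi_j} \otimes \gamma(y_i)^* \gamma(y_j)$ already has the desired form and supplies the "obvious" contribution $\gamma(y_i)^* \gamma(y_j)$ to each $\Gamma_{ij}$.

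The rest of the work is to process $\dolb \dolb^* = \sum_{i,j} E_{\xi_i} E_{\xi_j}^* \otimes \gamma(y_i) \gamma(y_j)^*$. For each of the nine pairs $(i,j)$, I would substitute the corresponding commutation relation from \cref{lem:rel-xi-xis}, so as to rewrite $E_{\xi_i} E_{\xi_j}^*$ as a $\sim$-combination of terms $E_{\xi_k}^* E_{\xi_l}$ (here $\sim$ means modulo $\Uql$, as in \cref{def:equiv-levi}). The six cases $i \le j$ are given outright; for the cases $i > j$ one applies $*$ to the relevant $i < j$ identity, which is legitimate since $\Uql$ is a Hopf $*$-subalgebra. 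For example, adjointing the $E_{\xi_1} E_{\xi_2}^*$ relation yields $E_{\xi_2} E_{\xi_1}^* \sim q^{-2} E_{\xi_1}^* E_{\xi_2} - Q [2]_q q^{-2} E_{\xi_2}^* E_{\xi_3}$.

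It then remains to gather the coefficient of each $E_{\xi_k}^* E_{\xi_l}$ and read off $\Gamma_{kl}$. For example, $\Gamma_{22}$ receives $\gamma(y_2)^* \gamma(y_2)$ from $\dolb^* \dolb$, the term $q^{-2} \gamma(y_2) \gamma(y_2)^*$ from $E_{\xi_2} E_{\xi_2}^*$, and the term $-Q q^{-2} \gamma(y_1) \gamma(y_1)^*$ produced by the $-Q q^{-2} E_{\xi_2}^* E_{\xi_2}$ piece of the $E_{\xi_1} E_{\xi_1}^*$ relation; similarly for every other entry. The identity $\Gamma_{ji} = \Gamma_{ij}^*$ for $i > j$ can either be verified directly in this bookkeeping, or deduced a posteriori from $(D^2)^* = D^2$ upon noting that $(E_{\xi_i}^* E_{\xi_j})^* = E_{\xi_j}^* E_{\xi_i}$. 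There is no real obstacle to this argument: the entire conceptual input sits in \cref{lem:rel-xi-xis}, and the rest is matching of coefficients.
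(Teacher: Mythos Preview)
Your approach is correct and essentially identical to the paper's own proof: it too writes $D^2 = \dolb\dolb^* + \dolb^*\dolb$, substitutes the relations of \cref{lem:rel-xi-xis} into $\dolb\dolb^*$ term by term, and then reads off each $\Gamma_{ij}$ from the resulting expansion. The only cosmetic difference is that the paper writes out the full nine-term expansion explicitly before identifying the $\Gamma_{ij}$, whereas you describe the bookkeeping in words and illustrate it with one example.
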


\begin{proof}
We have $D^2 = \dolb \dolb^* + \dolb^* \dolb$, since $\dolb^2 = 0$ and $(\dolb^*)^2 = 0$. Then we can write
\[
D^2 = \sum_{i, j = 1}^3 E_{\xi_i} E_{\xi_j}^* \otimes \gamma(y_i) \gamma(y_i)^* + \sum_{i, j = 1}^3 E_{\xi_j}^* E_{\xi_i} \otimes \gamma(y_i)^* \gamma(y_i).
\]
Next we use the commutation relations from \cref{lem:rel-xi-xis}. We obtain
\[
\begin{split}
D^2 & \sim E_{\xi_1}^* E_{\xi_1} \otimes \left( \gamma(y_1)^* \gamma(y_1) + q^{-4} \gamma(y_1) \gamma(y_1)^* \right) \\
& + E_{\xi_2}^* E_{\xi_2} \otimes \left( \gamma(y_2)^* \gamma(y_2) + q^{-2} \gamma(y_2) \gamma(y_2)^* - \qdiff q^{-2} \gamma(y_1) \gamma(y_1)^* \right) \\
& + E_{\xi_3}^* E_{\xi_3} \otimes \left( \gamma(y_3)^* \gamma(y_3) + q^{-4} \gamma(y_3 )\gamma(y_3)^* + \qdiff^2 [2]_q q^{-3} \gamma(y_1) \gamma(y_1)^* - \qdiff [2]_q^2 q^{-4} \gamma(y_2) \gamma(y_2)^* \right) \\
& + E_{\xi_1}^* E_{\xi_2} \otimes \left( \gamma(y_1)^* \gamma(y_2) + q^{-2} \gamma(y_2) \gamma(y_1)^* \right) + E_{\xi_1}^* E_{\xi_3} \otimes \left( \gamma(y_1)^* \gamma(y_3) + \gamma(y_3) \gamma(y_1)^* \right) \\
& + E_{\xi_2}^* E_{\xi_3} \otimes \left( \gamma(y_2)^* \gamma(y_3) + q^{-2} \gamma(y_3) \gamma(y_2)^* - \qdiff [2]_q q^{-2} \gamma(y_2) \gamma(y_1)^* \right) \\
& + E_{\xi_2}^* E_{\xi_1} \otimes \left( \gamma(y_2)^* \gamma(y_1) + q^{-2} \gamma(y_1) \gamma(y_2)^* \right) + E_{\xi_3}^* E_{\xi_1} \otimes \left(\gamma(y_3)^* \gamma(y_1) + \gamma(y_1) \gamma(y_3)^* \right) \\
& + E_{\xi_3}^* E_{\xi_2} \otimes \left( \gamma(y_3)^* \gamma(y_2) + q^{-2} \gamma(y_2) \gamma(y_3)^* - \qdiff [2]_q q^{-2} \gamma(y_1) \gamma(y_2)^* \right).
\end{split}
\]
Then the claim follows immediately from this expression.
\end{proof}

As mentioned in the introduction, our aim is to obtain a quantum analogue of the Parthasarathy formula for $D^2$. To proceed we need to an appropriate central element of $U_q(\mathfrak{sp}_4)$, which will play the role of the quadratic Casimir. This will be constructed in the next section. Once this is done, our next task will be to compare $D^2$ with this central element.

\section{Derivation Casimir}
\label{sec:casimir}

In this section we construct a central element of $U_q(\mathfrak{sp}_4)$, which will play the role of the quadratic Casimir.
It will be derived from a general construction which uses the R-matrix.

\subsection{Central elements}

Recall that the R-matrix of $\Uqg$, which we denote by $\Rmat$, is an element in a completion of $\Uqg \otimes \Uqg$ which among other properties satisfies the identity $\Rmat \Delta(X) = \Delta^{\mathrm{op}}(X) \Rmat$ for all $X \in \Uqg$.
We will now outline a general costruction of central elements of $\Uqg$, which is based on the properties of the element $\Rmat$.

First recall that the R-matrix can be written in the form $\Rmat = \widetilde{\Rmat} \cdot \kappa^-$, where $\widetilde{\Rmat}$ is the quasi-R-matrix and $\kappa^-$ is defined by $\kappa^- (v_\mu \otimes v_\nu) = q^{-(\mu, \nu)} v_\mu \otimes v_\nu$. An explicit expression for $\widetilde{\Rmat}$ is given in \cite[Section 8.30]{jantzen} (note that this reference uses opposite conventions for the R-matrix). Given the reduced expression $w_0 = s_{i_1} \cdots s_{i_n}$, we have
\[
\widetilde{\Rmat} = \widetilde{\Rmat}^{[n]} \cdot \widetilde{\Rmat}^{[n - 1]} \cdots \widetilde{\Rmat}^{[2]} \cdot \widetilde{\Rmat}^{[1]},
\]
where $\widetilde{\Rmat}^{[j]}$ is the quasi-R-matrix of $U_{q_{i_j}}(\mathfrak{sl}_2)$ given by
\[
\widetilde{\Rmat}^{[j]} = \sum_{r = 0}^\infty (-1)^r q_{i_j}^{- r (r - 1) / 2} \frac{(q_{i_j} - q_{i_j}^{-1})^r}{[r]_{i_j}!} E_{\beta_i}^r \otimes F_{\beta_i}^r.
\]
We will write schematically $\Rmat = \sum_A c_A (E^A \otimes F^A) \kappa^-$, where $A = (a_1, \cdots, a_n)$ and
\[
E^A := E_{\beta_n}^{a_n} \cdots E_{\beta_1}^{a_1}, \quad
F^A := F_{\beta_n}^{a_n} \cdots F_{\beta_1}^{a_1}.
\]
The coefficients $c_A$ are defined implicitly by the previous formula.
It is also known that $\Rmat^* = \Rmat_{21}$, where the latter is the flipped R-matrix. Finally consider the map
\[
I : \Cqg \to \Uqg, \quad \omega \mapsto (\id \otimes \omega) (\Rmat^* \Rmat).
\]

\begin{lemma}
Let $\eta_v \in \Cqg$ be defined by $\eta_v(X) = (v, X v)$, with $\mathrm{wt}(v) = \lambda$. Then
\[
I(\eta_v) = \sum_{A, B} c_A c_B (F^A v, F^B v) E^{A *} E^B K_{2 \lambda}^{-1},
\]
where in the sum we must have $\mathrm{wt}(A) = \mathrm{wt}(B)$.
\end{lemma}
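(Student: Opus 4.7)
The plan is to compute $I(\eta_v) = (\id \otimes \eta_v)(\Rmat^* \Rmat)$ directly by substituting the explicit PBW-type expansion of the R-matrix recalled above.

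First, since the coefficients $c_A$ are real and $\kappa^-$ is self-adjoint (being built from Cartan elements satisfying $K_\mu^* = K_\mu$), applying $*$ to $\Rmat = \sum_A c_A (E^A \otimes F^A)\kappa^-$ gives
\[
\Rmat^* = \sum_A c_A\, \kappa^-\, (E^{A*} \otimes F^{A*}),
\]
so that
\[
\Rmat^* \Rmat = \sum_{A,B} c_A c_B\, \kappa^-\, \bigl(E^{A*} E^B \otimes F^{A*} F^B\bigr)\, \kappa^-.
\]
This already exhibits the combination $E^{A*} E^B$ on the first leg.

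Next I would apply $\id \otimes \eta_v$ with $v$ a weight vector of weight $\lambda$. The defining identity $\kappa^-(v_\mu \otimes v_\lambda) = q^{-(\mu,\lambda)} v_\mu \otimes v_\lambda$ implies that, when the second leg of $\kappa^-$ is evaluated on $v_\lambda$, what remains on the first leg is the Cartan element $K_{-\lambda}$ (characterised by $K_{-\lambda} v_\mu = q^{-(\lambda,\mu)} v_\mu$). Applying this to both copies of $\kappa^-$ deposits one $K_{-\lambda}$ on each side of $E^{A*} E^B$ in the first leg, and reduces the second leg to the matrix element $(v, F^{A*} F^B v)$. Using $\Uqg$-invariance of the Hermitian form and $(F^{A*})^* = F^A$, this matrix element equals $(F^A v, F^B v)$, whose non-vanishing forces the weight-matching condition $\mathrm{wt}(A) = \mathrm{wt}(B)$.

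The main obstacle and final step is to consolidate the two Cartan factors on the first leg into the single factor $K_{2\lambda}^{-1}$ placed on the right. Under the weight-matching condition just established, the element $E^{A*} E^B$ has total weight $\mathrm{wt}(B) - \mathrm{wt}(A) = 0$, so the left $K_{-\lambda}$ commutes past it without producing any $q$-factor, and the two copies then combine into $K_{-2\lambda} = K_{2\lambda}^{-1}$. The delicate point here is precisely this cancellation of potential $q$-factors: it is guaranteed because the non-vanishing contributions to the sum come only from pairs $(A,B)$ with $\mathrm{wt}(A) = \mathrm{wt}(B)$, and tracking this through the bookkeeping of the two Cartan elements is the only part of the argument that requires care.
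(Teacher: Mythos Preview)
Your proposal is correct and follows essentially the same approach as the paper, which dispatches the lemma in one line by invoking the identity $\kappa^-(v_\mu \otimes v_\nu) = K_\nu^{-1} v_\mu \otimes v_\nu$; you have simply spelled out the bookkeeping in full. The only minor presentational wrinkle is that the left-hand $\kappa^-$ actually sees $F^{A*}F^B v$ rather than $v$, so a priori it deposits $K_{\lambda+\mathrm{wt}(A)-\mathrm{wt}(B)}^{-1}$ rather than $K_{-\lambda}$ --- but since the pairing with $v$ already kills all terms with $\mathrm{wt}(A)\neq\mathrm{wt}(B)$, this reduces to $K_{-\lambda}$ on the surviving terms, exactly as you note in your final paragraph.
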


\begin{proof}
It follows from the definitions and the observation that $\kappa^- (v_\mu \otimes v_\nu) = K_\nu^{-1} v_\mu \otimes v_\nu$.
\end{proof}

It is well-known that one can use the map $I$ to obtain central elements of $\Uqg$, see for instance \cite[Section 3]{LiGo92} and \cite[Section 1.2]{Bau98}.
This construction makes use of the quantum trace $\tau_{q, V}$, which is defined by $\tau_{q, V}(X) = \mathrm{Tr}(K_{2 \rho}^{-1} X)$ for a simple $\Uqg$-module $V$. Here the element $K_{2 \rho}^{-1}$ is related to the square of the antipode, which in our conventions is given by $S^2(X) = K_{2 \rho}^{-1} X K_{2 \rho}$. Then $I(\tau_{q, V})$ turns out to be a central element.

We will now obtain a concrete expression for this element.

\begin{proposition}
\label{prop:cas-general}
Let $V$ be a simple $\Uqg$-module with an orthonormal basis $\{ v_j \}_{j \in J}$ of weight $\mathrm{wt}(v_j) = \lambda_j$. Then we have the central element
\[
I(\tau_{q, V}) = \sum_{j \in J} \sum_{A, B} c_A c_B q^{-(2 \rho, \lambda_j)} (F^A v_j, F^B v_j) E^{A *} E^B K_{2 \lambda_j}^{-1}.
\]
\end{proposition}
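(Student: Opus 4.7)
The plan is to reduce the proposition to the preceding lemma by diagonalizing the quantum trace over the orthonormal weight basis. First, I would write
\[
\tau_{q, V}(X) = \mathrm{Tr}(K_{2 \rho}^{-1} X) = \sum_{j \in J} (v_j, K_{2 \rho}^{-1} X v_j).
\]
Since $K_{2 \rho}^* = K_{2 \rho}$ in the $*$-structure of $\Uqg$ and $K_{2 \rho} v_j = q^{(2 \rho, \lambda_j)} v_j$, I can move the self-adjoint factor $K_{2 \rho}^{-1}$ onto the left entry of the inner product, pulling out the real scalar $q^{-(2 \rho, \lambda_j)}$. This yields the decomposition
\[
\tau_{q, V} = \sum_{j \in J} q^{-(2 \rho, \lambda_j)} \eta_{v_j},
\]
where $\eta_{v_j}(X) = (v_j, X v_j)$ is the matrix coefficient appearing in the preceding lemma.

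Second, by linearity of the map $I$, I would get $I(\tau_{q, V}) = \sum_j q^{-(2 \rho, \lambda_j)} I(\eta_{v_j})$; substituting the explicit formula from the preceding lemma into each summand produces precisely the claimed expression, with the sum over $A, B$ automatically restricted to $\mathrm{wt}(A) = \mathrm{wt}(B)$ for the same reason as there.

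The remaining point is centrality of $I(\tau_{q, V})$, which is the content of the classical construction in the cited references. The key inputs are that $\Rmat^* \Rmat$ commutes with $\Delta(X)$ for every $X \in \Uqg$ (combining $\Rmat \Delta(X) = \Delta^{\mathrm{op}}(X) \Rmat$ with its adjoint version $\Delta(X) \Rmat^* = \Rmat^* \Delta^{\mathrm{op}}(X)$, using $\Rmat^* = \Rmat_{21}$), together with the twisted-trace property of $\tau_{q, V}$ that compensates for the antipode twist $S^2(X) = K_{2 \rho}^{-1} X K_{2 \rho}$; applying $\id \otimes \tau_{q, V}$ to the identity $(\Rmat^* \Rmat) \Delta(X) = \Delta(X) (\Rmat^* \Rmat)$ then gives $X \cdot I(\tau_{q, V}) = I(\tau_{q, V}) \cdot X$. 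I expect no genuine computational obstacle: the explicit formula is a direct application of the preceding lemma, and centrality is a standard property of this construction rather than anything specific to the $\mathfrak{sp}_4$ setting.
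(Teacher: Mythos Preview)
Your proposal is correct and follows exactly the paper's approach: the paper's own proof is a single line stating that it follows from $\tau_{q, V}(X) = \sum_{j \in J} q^{-(2 \rho, \lambda_j)} \eta_{v_j}(X)$, which is precisely the decomposition you derive, and centrality is not reproved but deferred to the cited references, as you also do. Your version is simply a more detailed unpacking of the same argument.
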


\begin{proof}
It follows from the fact that $\tau_{q, V} (X) = \sum_{j \in J} q^{-(2 \rho, \lambda_j)} \eta_{v_j} (X)$.
\end{proof}

We will rescale this central element for notational convenience.
\begin{definition}
We define a central element $\Cas \in \Uqg$ by
\[
\Cas := \frac{(\id \otimes \tau_{q, V}) (\Rmat^* \Rmat)}{(q - q^{-1})^2}.
\]
\end{definition}

Upon adding an appropriate multiple of the identity to $\Cas$, for instance $\tau_{q, V} (1) / (q - q^{-1})^2$, it is possible to recover the usual quadratic Casimir in the limit $q \to 1$. However this will not be very important for us, hence we will omit this constant.

\begin{corollary}
\label{cor:value-casimir}
Let $V(\Lambda)$ be a simple $\Uqg$-module of highest weight $\Lambda$. Then $\Cas = c_\Lambda \id$ on $V(\Lambda)$, where $c_\Lambda = \sum_{j \in J} q^{-2 (\lambda_j, \Lambda + \rho)} / (q - q^{-1})^2$.
\end{corollary}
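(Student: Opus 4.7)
The plan is to exploit the centrality of $\Cas$ together with Schur's lemma: since $V(\Lambda)$ is simple, $\Cas$ must act by a scalar, and it suffices to compute that scalar on the highest weight vector $v_\Lambda$. I would work directly from the explicit expression given in \cref{prop:cas-general}, namely
\[
\Cas = \frac{1}{(q - q^{-1})^2} \sum_{j \in J} \sum_{A, B} c_A c_B \, q^{-(2 \rho, \lambda_j)} \, (F^A v_j, F^B v_j) \, E^{A *} E^B \, K_{2 \lambda_j}^{-1},
\]
and evaluate it on $v_\Lambda$.

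The key simplification comes from the structure of the factor $E^B = E_{\beta_n}^{b_n} \cdots E_{\beta_1}^{b_1}$: since $v_\Lambda$ is annihilated by every positive root vector, $E^B v_\Lambda = 0$ as soon as at least one $b_i$ is positive. Thus only the term $B = 0$ survives. The weight condition $\mathrm{wt}(A) = \mathrm{wt}(B)$ that appears in the expansion of $\Rmat^* \Rmat$ then forces $A = 0$ as well, since $E^A$ has weight equal to the sum of positive roots involved, which is zero only when $A = 0$. For the $A = B = 0$ term, I would read off from the $r = 0$ contributions in each factor $\widetilde{\Rmat}^{[j]}$ that $c_0 = 1$, while $(F^0 v_j, F^0 v_j) = (v_j, v_j) = 1$ by orthonormality and $E^{0*} E^0 = 1$.

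What remains is simply $K_{2 \lambda_j}^{-1} v_\Lambda = q^{-(2 \lambda_j, \Lambda)} v_\Lambda$, so collecting terms gives
\[
\Cas \, v_\Lambda = \frac{1}{(q - q^{-1})^2} \sum_{j \in J} q^{-(2 \rho, \lambda_j)} q^{-(2 \lambda_j, \Lambda)} \, v_\Lambda = c_\Lambda \, v_\Lambda,
\]
which is the claimed eigenvalue. By Schur, this scalar extends to all of $V(\Lambda)$.

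This proof is essentially routine; I do not anticipate a genuine obstacle. The only point that deserves a moment of care is the vanishing argument for $A$ and $B$, which relies on matching the weight gradings of the $\widetilde{\Rmat}^{[j]}$ factors with the fact that each $E_{\beta_i}$ is strictly positive in weight. Once that is clear, the collapse of the double sum to a single index $j$ is immediate, and the stated formula for $c_\Lambda$ drops out.
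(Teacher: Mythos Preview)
Your proposal is correct and follows essentially the same approach as the paper's own proof: both reduce to the highest weight vector by centrality, kill all terms with $B \neq 0$ since $E^B v_\Lambda = 0$, force $A = 0$ by weight reasons, and evaluate the remaining Cartan part. You have simply spelled out a few details (Schur's lemma, $c_0 = 1$, orthonormality) that the paper leaves implicit.
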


\begin{proof}
Since $\Cas$ is central, it suffices to compute its action on a highest weight vector $v_\Lambda$. We have $E^B v_\Lambda = 0$ unless $B = 0$, which in turn implies $A = 0$ by weight reasons. Then
\[
\Cas v_\Lambda = \sum_{j \in J} \frac{q^{-(2 \rho, \lambda_j)}}{(q - q^{-1})^2} K_{2 \lambda_j}^{-1} v_\Lambda = \sum_{j \in J} \frac{q^{-2 (\lambda_j, \Lambda + \rho)}}{(q - q^{-1})^2} v_\Lambda. \qedhere
\]
\end{proof}

\subsection{Explicit formula}

In this subsection we will simplify the general expression given in \cref{prop:cas-general}, in the case where $\mathfrak{g} = \mathfrak{sp}_4$ and $V$ is the fundamental representation.

We begin by observing the vanishing of some terms in the given representation.

\begin{lemma}
\label{lem:f-vanish}
We have $\pi(F_{\beta_i}^2) = 0$ for $i = 1, 2, 3, 4$ and
\[
\begin{gathered}
\pi(F_{\beta_2} F_{\beta_1}) = \pi(F_{\beta_3} F_{\beta_2}) = \pi(F_{\beta_4} F_{\beta_2}) = \pi(F_{\beta_4} F_{\beta_3}) = 0, \\
\pi(F_{\beta_3} F_{\beta_2} F_{\beta_1}) = \pi(F_{\beta_4} F_{\beta_2} F_{\beta_1}) = \pi(F_{\beta_4} F_{\beta_3} F_{\beta_2}) = 0.
\end{gathered}
\]
On the other hand $\pi(F_{\beta_3} F_{\beta_1})$ and $\pi(F_{\beta_4} F_{\beta_1})$ are non-zero.
\end{lemma}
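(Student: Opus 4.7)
The plan is to verify all assertions by a direct computation of the action of each $F_{\beta_i}$ on the weight basis $\{v_1,v_2,v_3,v_4\}$ of $V(\omega_1)$, combining the explicit formulas from \cref{lem:root-E}(2) with the action of $F_1, F_2$ tabulated in \cref{lem:fundamental-c2}. The key preliminary observation is that on this four-dimensional module one has $\pi(F_1^2) = \pi(F_2^2) = 0$: indeed $F_1$ only sends $v_1 \mapsto q^{-1/2} v_2$ and $v_3 \mapsto q^{-1/2} v_4$, while $F_2$ only sends $v_2 \mapsto q^{-1} v_3$, and in each case the image lies in the kernel of the same generator. Consequently the general formula for $F_{\beta_2}$ collapses on $V(\omega_1)$ to $\pi(F_{\beta_2}) = -q\,\pi(F_1 F_2 F_1)$, which is simple to evaluate.

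A short calculation then produces the full action:
\[
\begin{gathered}
F_{\beta_1}\colon v_1 \mapsto q^{-1/2} v_2,\quad v_3 \mapsto q^{-1/2} v_4,\\
F_{\beta_2}\colon v_1 \mapsto -q^{-1} v_4,\\
F_{\beta_3}\colon v_1 \mapsto q^{-3/2} v_3,\quad v_2 \mapsto -q^{1/2} v_4,\\
F_{\beta_4}\colon v_2 \mapsto q^{-1} v_3,
\end{gathered}
\]
with all other basis vectors annihilated. With these one-column tables in hand, every vanishing in the lemma reduces to checking that the image of the rightmost factor does not meet the support of the leftmost one. For example $\pi(F_{\beta_2} F_{\beta_1}) = 0$ because $F_{\beta_1}$ has image in $\mathrm{span}\{v_2, v_4\}$ while $F_{\beta_2}$ is supported only on $v_1$; the other two-term products are dispatched identically. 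The three three-term products reduce immediately to the two-term ones, and the vanishings $\pi(F_{\beta_i}^2) = 0$ follow from the observation that each $F_{\beta_i}$ maps its image into its kernel. Finally the non-vanishing claims are witnessed on $v_1$: one obtains $\pi(F_{\beta_3} F_{\beta_1}) v_1 = -v_4$ and $\pi(F_{\beta_4} F_{\beta_1}) v_1 = q^{-3/2} v_3$, both nonzero.

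There is no serious obstacle; the whole argument is a mechanical verification once the action of $F_{\beta_2}$ and $F_{\beta_3}$ is written out, and the collapse $F_1^2 = F_2^2 = 0$ in this representation is what makes the cubic-in-$F_i$ expression for $F_{\beta_2}$ manageable. A purely weight-theoretic shortcut is also available, since the weights of $V(\omega_1)$ are distinct of multiplicity one and $\pi(F_{\beta_{i_1}} \cdots F_{\beta_{i_k}})$ lowers weights by $\beta_{i_1} + \cdots + \beta_{i_k}$, so most vanishings can be read off by checking whether this sum equals any $\lambda_j - \lambda_k$; but the direct calculation above has the virtue of simultaneously confirming the two non-vanishing cases with explicit coefficients that will be useful in the subsequent computation of the Casimir action.
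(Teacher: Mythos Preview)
Your proposal is correct and follows precisely the approach indicated by the paper, which simply states that the lemma follows easily from the formulae in \cref{lem:fundamental-c2}; you have spelled out the verification in full detail, including the useful simplification $\pi(F_1^2)=\pi(F_2^2)=0$ that collapses the expression for $F_{\beta_2}$. The explicit values $\pi(F_{\beta_3}F_{\beta_1})v_1=-v_4$ and $\pi(F_{\beta_4}F_{\beta_1})v_1=q^{-3/2}v_3$ are correct and indeed match the inner products used later in \cref{prop:casimir-rmatrix}.
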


\begin{proof}
Follows easily from the formulae given in \cref{lem:fundamental-c2}.
\end{proof}

We are now ready to derive the main result of this section.

\begin{proposition}
\label{prop:casimir-rmatrix}
We have the central element $\Cas = \Cas_c + \Cas_q$, where
\[
\begin{split}
\Cas_c & = (q^{-4} K_{2 \lambda_1}^{-1} + q^{-2} K_{2 \lambda_2}^{-1} + q^2 K_{2 \lambda_3}^{-1} + q^4 K_{2 \lambda_4}^{-1}) / (q - q^{-1})^2 \\
& + E_{\beta_1}^* E_{\beta_1} (q^{-5} K_{2 \lambda_1}^{-1} + q K_{2 \lambda_3}^{-1}) + [2]_q^2 E_{\beta_2}^* E_{\beta_2} q^{-6} K_{2 \lambda_1}^{-1} \\
& + E_{\beta_3}^* E_{\beta_3} (q^{-7} K_{2 \lambda_1}^{-1} + q^{-1} K_{2 \lambda_2}^{-1}) + [2]_q^2 E_{\beta_4}^* E_{\beta_4} q^{-4} K_{2 \lambda_2}^{-1},
\end{split}
\]
and the quantum part is given by
\[
\begin{split}
\Cas_q & = - (q - q^{-1}) [2]_q q^{-5} (E_{\beta_1}^* E_{\beta_3}^* E_{\beta_2} + E_{\beta_2}^* E_{\beta_3} E_{\beta_1}) K_{2 \lambda_1}^{-1} \\
& - (q - q^{-1}) [2]_q q^{-7} (E_{\beta_1}^* E_{\beta_4}^* E_{\beta_3} + E_{\beta_3}^* E_{\beta_4} E_{\beta_1}) K_{2 \lambda_1}^{-1} \\
& + (q - q^{-1})^2 q^{-4} E_{\beta_1}^* E_{\beta_3}^* E_{\beta_3} E_{\beta_1} K_{2 \lambda_1}^{-1} \\
& + (q - q^{-1})^2 [2]_q^2 q^{-7} E_{\beta_1}^* E_{\beta_4}^* E_{\beta_4} E_{\beta_1} K_{2 \lambda_1}^{-1}.
\end{split}
\]
\end{proposition}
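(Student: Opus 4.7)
The plan is to directly substitute the fundamental representation $V = V(\omega_1)$ into the general formula of \cref{prop:cas-general} and exploit the vanishings in \cref{lem:f-vanish} to cut the sum down to a manageable finite list. First, I would identify the multi-indices $A = (a_1, a_2, a_3, a_4)$ for which $\pi(F^A) \neq 0$. The relation $\pi(F_{\beta_i}^2) = 0$ forces each $a_i \in \{0, 1\}$, and the other vanishings in \cref{lem:f-vanish} eliminate most length-two products and all length-three products, leaving exactly seven survivors: the empty multi-index, the four single-index ones $F_{\beta_i}$, and the two products $F_{\beta_3} F_{\beta_1}$ and $F_{\beta_4} F_{\beta_1}$.

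Next I would compute the action of these seven operators on the weight basis $\{v_j\}_{j = 1}^4$ using \cref{lem:fundamental-c2}. The expressions for $F_{\beta_2}$ and $F_{\beta_3}$ from \cref{lem:root-E} simplify considerably on $V$: since $F_1^2|_V = 0$, the formula for $F_{\beta_2}$ collapses to $-q F_1 F_2 F_1$. A short computation then produces a compact table of non-zero values, including for instance $F_{\beta_2} v_1 = -q^{-1} v_4$, $F_{\beta_3} F_{\beta_1} v_1 = -v_4$ and $F_{\beta_4} F_{\beta_1} v_1 = q^{-3/2} v_3$.

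By orthonormality of $\{v_j\}$, a pair $(A, B)$ contributes at $v_j$ only when $F^A v_j$ and $F^B v_j$ are proportional to the same basis vector. After sorting the contributions by weight, the diagonal pairs $A = B$ assemble into $\Cas_c$ on using $(2\rho, \lambda_j) = 4, 2, -2, -4$ for $j = 1, 2, 3, 4$. The off-diagonal contributions arise only at $v_1$: the conjugate pairs $(F_{\beta_3} F_{\beta_1}, F_{\beta_2})$ and $(F_{\beta_4} F_{\beta_1}, F_{\beta_3})$ together with their swaps yield the first two lines of $\Cas_q$, while the remaining diagonal pairs $(F_{\beta_3} F_{\beta_1}, F_{\beta_3} F_{\beta_1})$ and $(F_{\beta_4} F_{\beta_1}, F_{\beta_4} F_{\beta_1})$ yield the last two. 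The coefficients $c_A$ factor into products of single-index coefficients; since our reduced decomposition gives $q_{\beta_1} = q_{\beta_3} = q$ and $q_{\beta_2} = q_{\beta_4} = q^2$, one has $c_{a_i = 1} = -(q - q^{-1})$ for $i \in \{1, 3\}$ and $c_{a_i = 1} = -(q^2 - q^{-2})$ for $i \in \{2, 4\}$, read off from the $r = 1$ term of each quasi-R-matrix factor.

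The main obstacle is not conceptual but purely bookkeeping: one has to carefully track the powers of $q$ (including the overall factor $1/(q - q^{-1})^2$ and the weight-dependent factors $q^{-(2\rho, \lambda_j)}$), the signs coming from the explicit forms of $F_{\beta_2}$ and $F_{\beta_3}$, and the placement of the stars in $E^{A*} E^B$ so that each off-diagonal pair produces its correctly symmetrized entry $E^{A*} E^B + E^{B*} E^A$ in the stated formula for $\Cas_q$.
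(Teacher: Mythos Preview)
Your approach is correct and essentially identical to the paper's own proof: both start from \cref{prop:cas-general}, use \cref{lem:f-vanish} to cut the sum down to the seven surviving monomials $F^A$, compute the relevant inner products $(F^A v_j, F^B v_j)$ via \cref{lem:fundamental-c2} and \cref{lem:root-E}, and then insert the explicit values $(2\rho,\lambda_j) = 4,2,-2,-4$ together with the coefficients $c_{\beta_i}$ read off from the quasi-R-matrix. The paper organizes the computation by the bidegree $(|A|,|B|)$ rather than by listing the seven operators upfront, but this is only a cosmetic difference; your sample values $F_{\beta_2} v_1 = -q^{-1} v_4$, $F_{\beta_3} F_{\beta_1} v_1 = -v_4$, $F_{\beta_4} F_{\beta_1} v_1 = q^{-3/2} v_3$ reproduce exactly the inner products the paper records. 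One small slip in your write-up: you say ``the diagonal pairs $A=B$ assemble into $\Cas_c$'', but the two length-two diagonal pairs actually land in $\Cas_q$ (as you yourself note two sentences later); only the diagonal pairs with $|A|\le 1$ give $\Cas_c$.
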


\begin{proof}
Consider the general expression given in \cref{prop:cas-general}, up to rescaling.
Then we can write $\Cas = \sum_{m, n} \Cas_{m, n}$, where $\Cas_{m, n}$ contains terms with $m = |A|$ and $n = |B|$.
To simplify the expression we consider the inner products $(F^A v_j, F^B v_j)$.
It follows from \cref{lem:f-vanish} that $\pi(F^A) = 0$ for $|A| > 2$, hence it suffices to consider $0 \leq m, n \leq 2$.

Consider the case $m = n = 0$. We have $(v_i, v_i) = 1$ and $c_0 = 1$. Then
\[
\Cas_{0, 0} = q^{- (2\rho, \lambda_1)} K_{2 \lambda_1}^{-1} + q^{- (2\rho, \lambda_2)} K_{2 \lambda_2}^{-1} + q^{- (2\rho, \lambda_3)} K_{2 \lambda_3}^{-1} + q^{- (2\rho, \lambda_4)} K_{2 \lambda_4}^{-1}.
\]
Next consider $m = 1$ and $n = 0$. We have $(F_{\beta_i} v_j, v_j) = 0$ by weight reasons. Similarly for the case $m = 0$ and $n = 1$. Hence $\Cas_{1, 0} = \Cas_{0, 1} = 0$.

Now consider the case $m = n = 1$. If $i \neq j$ then $(F_{\beta_i} v_k, F_{\beta_j} v_k) = 0$ by weight reasons. In the case $i = j$ we have the non-zero inner products
\[
\begin{gathered}
(F_{\beta_1} v_1, F_{\beta_1} v_1) = q^{-1}, \quad (F_{\beta_1} v_3, F_{\beta_1} v_3) = q^{-1}, \quad (F_{\beta_2} v_1, F_{\beta_2} v_1) = q^{-2}, \\
(F_{\beta_3} v_1, F_{\beta_3} v_1) = q^{-3}, \quad (F_{\beta_3} v_2, F_{\beta_3} v_2) = q, \quad (F_{\beta_4} v_2, F_{\beta_4} v_2) = q^{-2}.
\end{gathered}
\]
Hence we get the terms
\[
\begin{split}
\Cas_{1, 1} & = c_{\beta_1}^2 E_{\beta_1}^* E_{\beta_1} (q^{-(2\rho, \lambda_1)} q^{-1} K_{2 \lambda_1}^{-1} + q^{-(2\rho, \lambda_3)} q^{-1} K_{2 \lambda_3}^{-1}) + c_{\beta_2}^2 E_{\beta_2}^* E_{\beta_2} q^{-(2\rho, \lambda_1)} q^{-2} K_{2 \lambda_1}^{-1} \\
& + c_{\beta_3}^2 E_{\beta_3}^* E_{\beta_3} (q^{-(2\rho, \lambda_1)} q^{-3} K_{2 \lambda_1}^{-1} + q^{-(2\rho, \lambda_2)} q K_{2 \lambda_2}^{-1}) + c_{\beta_4}^2 E_{\beta_4}^* E_{\beta_4} q^{-(2\rho, \lambda_2)} q^{-2} K_{2 \lambda_2}^{-1}.
\end{split}
\]

Next consider $m = 2$ and $n = 1$. The only two possibilities for $F^A$ are $F_{\beta_3} F_{\beta_1}$ and $F_{\beta_4} F_{\beta_1}$. Then by weight reasons the only possibly non-zero inner products are $(F_{\beta_3} F_{\beta_1} v_j, F_{\beta_2} v_j)$ and $(F_{\beta_4} F_{\beta_1} v_j, F_{\beta_3} v_j)$. We have in particular
\[
(F_{\beta_3} F_{\beta_1} v_1, F_{\beta_2} v_1) = q^{-1}, \quad (F_{\beta_4} F_{\beta_1} v_1, F_{\beta_3} v_1) = q^{-3}.
\]
Therefore we get
\[
\Cas_{2, 1} = c_{\beta_1 + \beta_3} c_{\beta_2} q^{-(2\rho, \lambda_1)} E_{\beta_1}^* E_{\beta_3}^* E_{\beta_2} q^{-1} K_{2 \lambda_1}^{-1} + c_{\beta_1 + \beta_4} c_{\beta_3} q^{-(2\rho, \lambda_1)} E_{\beta_1}^* E_{\beta_4}^* E_{\beta_3} q^{-3} K_{2 \lambda_1}^{-1}.
\]
The case $m = 1$ and $n = 2$ follows immediately from the fact that $(F^A v_i, F^B v_i) = (F^B v_i, F^A v_i)$, as all the matrix entries are real, as can be seen from \cref{lem:fundamental-c2}. Then we get
\[
\Cas_{1, 2} = c_{\beta_1 + \beta_3} c_{\beta_2} q^{-(2\rho, \lambda_1)} E_{\beta_2}^* E_{\beta_3} E_{\beta_1} q^{-1} K_{2 \lambda_1}^{-1} + c_{\beta_1 + \beta_4} c_{\beta_3} q^{-(2\rho, \lambda_1)} E_{\beta_3}^* E_{\beta_4} E_{\beta_1} q^{-3} K_{2 \lambda_1}^{-1}.
\]
Finally we have the case $m = n = 2$. By weight reasons the only possibly non-zero terms are $(F_{\beta_3} F_{\beta_1} v_i, F_{\beta_3} F_{\beta_1} v_i)$ and $(F_{\beta_4} F_{\beta_1} v_i, F_{\beta_4} F_{\beta_1} v_i)$. The non-zero inner products are
\[
(F_{\beta_3} F_{\beta_1} v_1, F_{\beta_3} F_{\beta_1} v_1) = 1, \quad (F_{\beta_4} F_{\beta_1} v_1, F_{\beta_4} F_{\beta_1} v_1) = q^{-3}.
\]
Therefore we get the terms
\[
\Cas_{2, 2} = c_{\beta_1 + \beta_3}^2 E_{\beta_1}^* E_{\beta_3}^* E_{\beta_3} E_{\beta_1} q^{-(2\rho, \lambda_1)} K_{2 \lambda_1}^{-1} + c_{\beta_1 + \beta_4}^2 E_{\beta_1}^* E_{\beta_4}^* E_{\beta_4} E_{\beta_1} q^{-(2\rho, \lambda_1)} q^{-3} K_{2 \lambda_1}^{-1}.
\]

Finally we have to plug the explicit values into the previous formulae. We have
\[
(2\rho, \lambda_1) = 4, \quad (2\rho, \lambda_2) = 2, \quad (2\rho, \lambda_3) = -2, \quad (2\rho, \lambda_4) = -4.
\]
For the coefficients coming from the R-matrix we have
\[
\begin{gathered}
c_{\beta_1} = c_{\beta_3} = - (q - q^{-1}) = - Q, \quad
c_{\beta_2} = c_{\beta_4} = - (q^2 - q^{-2}) = - Q [2]_q, \\
c_{\beta_1 + \beta_3} = c_{\beta_1} c_{\beta_3} = Q^2 [2]_q, \quad 
c_{\beta_1 + \beta_4} = c_{\beta_1} c_{\beta_4} = Q^2 [2]_q.
\end{gathered}
\]
Plugging in all these values and collecting the various terms we obtain the result.
\end{proof}

\subsection{The classical limit}

We take as a basis of $\mathfrak{g} = \mathfrak{sp}_4$ the elements $H_1$, $H_2$ and the classical limit of the quantum root vectors $E_{\beta_i}$ and $F_{\beta_i}$ given in \cref{lem:root-E}. Then with respect to a rescaling of the Killing form we have the dual basis
\[
\begin{gathered}
\widetilde{H_1} = H_1 + H_2, \quad \widetilde{H_2} = H_1 + 2 H_2, \quad \widetilde{E_{\beta_1}} = F_{\beta_1}, \quad \widetilde{E_{\beta_2}} = 2 F_{\beta_2}, \\
\widetilde{E_{\beta_3}} = F_{\beta_3}, \quad \widetilde{E_{\beta_4}} = 2 F_{\beta_4}, \quad \widetilde{F_{\beta_1}} = E_{\beta_1}, \quad \widetilde{F_{\beta_2}} = 2 E_{\beta_2}, \quad \widetilde{F_{\beta_3}} = E_{\beta_3}, \quad \widetilde{F_{\beta_4}} = 2 E_{\beta_4}.
\end{gathered}
\]
In general the quadratic Casimir is given by $\mathcal{C} = \sum_i X_i \widetilde{X_i}$. In this case we have
\[
\begin{split}
\mathcal{C} & = H_1^2 + 2 H_1 H_2 + 2 H_2^2 + E_{\beta_1} F_{\beta_1} + 2 E_{\beta_2} F_{\beta_2} + E_{\beta_3} F_{\beta_3} + 2 E_{\beta_4} F_{\beta_4} \\
& + F_{\beta_1} E_{\beta_1} + 2 F_{\beta_2} E_{\beta_2} + F_{\beta_3} E_{\beta_3} + 2 F_{\beta_4} E_{\beta_4}.
\end{split}
\]
This can be rewritten using the commutation relations
\[
[E_{\beta_1}, F_{\beta_1}] = H_1, \quad [E_{\beta_2}, F_{\beta_2}] = H_1 + H_2, \quad [E_{\beta_3}, F_{\beta_3}] = H_1 + 2 H_2, \quad [E_{\beta_4}, F_{\beta_4}] = H_2.
\]
Therefore we obtain the following expression for the quadratic Casimir
\[
\mathcal{C} = H_1^2 + 2 H_1 H_2 + 2 H_2^2 + 4 H_1 + 6 H_2 + 2 F_{\beta_1} E_{\beta_1} + 4 F_{\beta_2} E_{\beta_2} + 2 F_{\beta_3} E_{\beta_3} + 4 F_{\beta_4} E_{\beta_4}.
\]
Moreover we have $E_{\beta_i}^* = F_{\beta_i}$. Then it is easy to see that, upon the addition of an appropriate constant, this coincides with the limit $q \to 1$ of the central element of \cref{prop:casimir-rmatrix}.

\section{Rewriting the Casimir}
\label{sec:rewriting}

In this section we will obtain a different expression for the Casimir element $\Cas$, which will facilitate the comparison with the Dolbeault--Dirac element $D$. Along the way we introduce a certain algebra $\algspace$, which allows to formulate the comparison problem algebraically.

\subsection{Commutation relations}

We start by deriving commutation relations between the element $E_{\beta_1}^*$ and the quantum root vectors $E_{\beta_i}$. We will use the relations
\[
E_{\beta_1} E_{\beta_3} - E_{\beta_3} E_{\beta_1} = [2]_q E_{\beta_2}, \quad
E_{\beta_1} E_{\beta_4} - q^{-2} E_{\beta_4} E_{\beta_1} = E_{\beta_3}.
\]
which have been derived in \cref{lem:levi-up}.

\begin{lemma}
\label{lem:rel-e-es}
We have the commutation relations
\[
\begin{gathered}
E_{\beta_1}^* E_{\beta_1} = q^2 E_{\beta_1} E_{\beta_1}^* - q^2 \frac{K_1^2 - 1}{q - q^{-1}}, \quad
E_{\beta_1}^* E_{\beta_2} = q^2 E_{\beta_2} E_{\beta_1}^* + q^2 E_{\beta_3}, \\
E_{\beta_1}^* E_{\beta_3} = E_{\beta_3} E_{\beta_1}^* + [2]_q E_{\beta_4}, \quad
E_{\beta_1}^* E_{\beta_4} = q^{-2} E_{\beta_4} E_{\beta_1}^*.
\end{gathered}
\]
\end{lemma}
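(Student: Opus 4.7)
The plan is to derive all four relations uniformly by expanding $E_{\beta_1}^{\ast} = E_1^{\ast} = F_1 K_1$ via the $\ast$-structure, and then in each case push $K_1$ past $E_{\beta_j}$ using the Cartan relation $K_1 E_{\beta_j} = q^{(\alpha_1, \beta_j)} E_{\beta_j} K_1$ and compute the commutator $[F_1, E_{\beta_j}]$.

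The key observation is that, except for the first relation (where $j = 1$), these commutators are essentially already known from \cref{lem:levi-up}. Indeed, from the identity $F_i \triangleright Y = [F_i, Y] K_i$ used in the proof of that lemma, one obtains
\[
[F_1, E_{\beta_j}] = (F_1 \triangleright E_{\beta_j}) K_1^{-1},
\]
and the adjoint action $F_1 \triangleright E_{\xi_k}$ has been tabulated there (recalling $\xi_1 = \beta_2$, $\xi_2 = \beta_3$, $\xi_3 = \beta_4$). This immediately gives $[F_1, E_{\beta_2}] = E_{\beta_3} K_1^{-1}$, $[F_1, E_{\beta_3}] = [2]_q E_{\beta_4} K_1^{-1}$, and $[F_1, E_{\beta_4}] = 0$. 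For $j=1$ one just uses the defining relation $[E_1, F_1] = (K_1 - K_1^{-1})/(q - q^{-1})$.

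Concretely, I would carry out each case in the form
\[
E_1^{\ast} E_{\beta_j} = F_1 K_1 E_{\beta_j} = q^{(\alpha_1, \beta_j)} F_1 E_{\beta_j} K_1 = q^{(\alpha_1, \beta_j)} \bigl( E_{\beta_j} F_1 + [F_1, E_{\beta_j}] \bigr) K_1,
\]
which gives the leading term $q^{(\alpha_1, \beta_j)} E_{\beta_j} E_1^{\ast}$ together with the explicit correction from the commutator. One just plugs in $(\alpha_1, \beta_1) = 2$, $(\alpha_1, \beta_2) = 2$, $(\alpha_1, \beta_3) = 0$, $(\alpha_1, \beta_4) = -2$ and simplifies. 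For the first relation, the correction is $q^2 \cdot \frac{K_1 - K_1^{-1}}{q - q^{-1}} \cdot (-1) \cdot K_1 = - q^2 \frac{K_1^2 - 1}{q - q^{-1}}$, and for the remaining three cases the factor of $K_1^{-1}$ in $[F_1, E_{\beta_j}]$ cancels against the $K_1$ on the right, yielding the stated expressions without Cartan tails.

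There is no real obstacle here: everything reduces to the Cartan commutation relations and the adjoint action already computed in \cref{lem:levi-up}. The only point requiring some care is bookkeeping signs and powers of $q$, and verifying that the output of $F_1 \triangleright E_{\beta_j}$ is correctly transported through the identity $[F_1, \cdot] = (F_1 \triangleright \cdot) K_1^{-1}$ before being multiplied on the right by $K_1$.
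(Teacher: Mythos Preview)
Your argument is correct and, for $j = 1, 3, 4$, matches what the paper presumably means by ``simple computations that we omit.'' The only divergence is the case $j = 2$: you handle it uniformly by invoking \cref{lem:levi-up}, which already gives $F_1 \triangleright E_{\beta_2} = E_{\beta_3}$ and hence $[F_1, E_{\beta_2}] = E_{\beta_3} K_1^{-1}$. The paper instead treats this as the ``more complicated'' case and bootstraps it from the other three relations, writing $[2]_q E_{\beta_2} = E_{\beta_1} E_{\beta_3} - E_{\beta_3} E_{\beta_1}$ and commuting $E_{\beta_1}^*$ through using the already-established identities for $j = 1, 3, 4$. Your route is shorter and more uniform, since the relevant commutator was in fact already computed inside the proof of \cref{lem:levi-up}; the paper's route has the minor virtue of being self-contained within this lemma, but at the cost of a longer calculation.
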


\begin{proof}
The relations for $E_{\beta_1}^* E_{\beta_1}$, $E_{\beta_1}^* E_{\beta_3}$ and $E_{\beta_1}^* E_{\beta_4}$ follow from simple computations that we omit.
We show the more complicated identity for $E_{\beta_1}^* E_{\beta_2}$. We can write
\[
E_{\beta_1}^* E_{\beta_2} = \frac{1}{[2]_q} \left( E_{\beta_1}^* E_{\beta_1} E_{\beta_3} - E_{\beta_1}^* E_{\beta_3} E_{\beta_1} \right).
\]
Using the other commutation relations we obtain
\[
\begin{split}
E_{\beta_1}^* E_{\beta_2} & = \frac{1}{[2]_q} \left( q^2 E_{\beta_1} E_{\beta_1}^* E_{\beta_3} - q^2 \frac{K_1^2 - 1}{q - q^{-1}} E_{\beta_3} - E_{\beta_3} E_{\beta_1}^* E_{\beta_1} - [2]_q E_{\beta_4} E_{\beta_1} \right) \\
& = \frac{1}{[2]_q} \left( q^2 E_{\beta_1} E_{\beta_3} E_{\beta_1}^* + [2]_q q^2 E_{\beta_1} E_{\beta_4} - q^2 \frac{K_1^2 - 1}{q - q^{-1}} E_{\beta_3} \right) \\
& + \frac{1}{[2]_q} \left( -q^2 E_{\beta_3} E_{\beta_1} E_{\beta_1}^* + q^2 E_{\beta_3} \frac{K_1^2 - 1}{q - q^{-1}} - [2]_q E_{\beta_4} E_{\beta_1} \right).
\end{split}
\]
From $E_{\beta_1} E_{\beta_3} - E_{\beta_3} E_{\beta_1} = [2]_q E_{\beta_2}$ and $K_1 E_{\beta_3} = E_{\beta_3} K_1$ we get
\[
E_{\beta_1}^* E_{\beta_2} = q^2 E_{\beta_2} E_{\beta_1}^* + q^2 E_{\beta_1} E_{\beta_4} - E_{\beta_4} E_{\beta_1} = q^2 E_{\beta_2} E_{\beta_1}^* + q^2 E_{\beta_3},
\]
where in the last step we have used $E_{\beta_1} E_{\beta_4} - q^{-2} E_{\beta_4} E_{\beta_1} = E_{\beta_3}$.
\end{proof}

\begin{lemma}
\label{lem:rel-rewrite-cas}
We have the following relations
\[
\begin{split}
(E_{\beta_3} E_{\beta_1})^* E_{\beta_2} & = q^2 E_{\beta_3}^* E_{\beta_2} E_{\beta_1}^* + q^2 E_{\beta_3}^* E_{\beta_3} - [2]_q E_{\beta_2}^* E_{\beta_2}, \\
(E_{\beta_4} E_{\beta_1})^* E_{\beta_3} & = q^2 E_{\beta_4}^* E_{\beta_3} E_{\beta_1}^* + [2]_q q^2 E_{\beta_4}^* E_{\beta_4} - q^2 E_{\beta_3}^* E_{\beta_3}, \\
(E_{\beta_3} E_{\beta_1})^* E_{\beta_3} E_{\beta_1} & = E_{\beta_3}^* E_{\beta_3} E_{\beta_1}^* E_{\beta_1} + [2]_q (E_{\beta_3}^* E_{\beta_4} E_{\beta_1} - E_{\beta_2}^* E_{\beta_3} E_{\beta_1}), \\
(E_{\beta_4} E_{\beta_1})^* E_{\beta_4} E_{\beta_1} & = E_{\beta_4}^* E_{\beta_4} E_{\beta_1}^* E_{\beta_1} - q^2 E_{\beta_3}^* E_{\beta_4} E_{\beta_1}.
\end{split}
\]
\end{lemma}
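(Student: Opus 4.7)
The plan is to reduce each of the four identities to a straightforward application of \cref{lem:rel-e-es}, once one has commuted the two starred factors on the left-hand side past each other. First I would use the factorization $(E_{\beta_i} E_{\beta_j})^* = E_{\beta_j}^* E_{\beta_i}^*$ that comes from $*$ being antimultiplicative, so the left-hand sides become $E_{\beta_1}^* E_{\beta_3}^* E_{\beta_2}$, $E_{\beta_1}^* E_{\beta_4}^* E_{\beta_3}$, $E_{\beta_1}^* E_{\beta_3}^* E_{\beta_3} E_{\beta_1}$ and $E_{\beta_1}^* E_{\beta_4}^* E_{\beta_4} E_{\beta_1}$ respectively.

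The key intermediate step is to commute $E_{\beta_1}^*$ past $E_{\beta_3}^*$ and $E_{\beta_4}^*$. Applying $*$ to the commutation relations $[E_{\beta_1}, E_{\beta_3}] = [2]_q E_{\beta_2}$ and $E_{\beta_1} E_{\beta_4} - q^{-2} E_{\beta_4} E_{\beta_1} = E_{\beta_3}$ (both recorded in \cref{lem:levi-up}) yields
\[
E_{\beta_1}^* E_{\beta_3}^* = E_{\beta_3}^* E_{\beta_1}^* - [2]_q E_{\beta_2}^*, \qquad
E_{\beta_1}^* E_{\beta_4}^* = q^2 E_{\beta_4}^* E_{\beta_1}^* - q^2 E_{\beta_3}^*.
\]
Substituting these into each left-hand side expresses everything in terms of monomials in which $E_{\beta_1}^*$ stands just to the left of an unstarred $E_{\beta_j}$.

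At this point the four identities are obtained by a direct application of \cref{lem:rel-e-es}. For instance, for the first relation one gets $E_{\beta_3}^* E_{\beta_1}^* E_{\beta_2} - [2]_q E_{\beta_2}^* E_{\beta_2}$, and then $E_{\beta_1}^* E_{\beta_2} = q^2 E_{\beta_2} E_{\beta_1}^* + q^2 E_{\beta_3}$ gives exactly the claimed right-hand side. For the third relation one similarly rewrites, and additionally uses $E_{\beta_1}^* E_{\beta_3} = E_{\beta_3} E_{\beta_1}^* + [2]_q E_{\beta_4}$ before combining the two pieces. The second and fourth relations work the same way, but with the $(E_{\beta_1}^*, E_{\beta_4}^*)$ swap and the identities $E_{\beta_1}^* E_{\beta_3} = E_{\beta_3} E_{\beta_1}^* + [2]_q E_{\beta_4}$ and $E_{\beta_1}^* E_{\beta_4} = q^{-2} E_{\beta_4} E_{\beta_1}^*$.

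There is no real obstacle here beyond bookkeeping; the only point requiring any care is tracking the $q$-powers and the sign of the $[2]_q E_{\beta_2}^*$ correction that arises from the starred swap, together with checking that no new $\Uql$-terms sneak in from $E_{\beta_1}^* E_{\beta_1} = q^2 E_{\beta_1} E_{\beta_1}^* - q^2 (K_1^2-1)/(q-q^{-1})$ — indeed, this latter relation is never triggered in any of the four cases because $E_{\beta_1}^*$ is always moved past $E_{\beta_2}$, $E_{\beta_3}$ or $E_{\beta_4}$, never past $E_{\beta_1}$ itself, so the computation stays inside the subalgebra generated by the quantum root vectors and their adjoints.
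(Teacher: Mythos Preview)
Your proposal is correct and is exactly the approach the paper has in mind; the paper's proof consists solely of the sentence ``These follow easily from the commutation relations of \cref{lem:rel-e-es}'', and what you have written is the natural unpacking of that remark. The only extra ingredient you make explicit---the starred swaps $E_{\beta_1}^* E_{\beta_3}^* = E_{\beta_3}^* E_{\beta_1}^* - [2]_q E_{\beta_2}^*$ and $E_{\beta_1}^* E_{\beta_4}^* = q^2 E_{\beta_4}^* E_{\beta_1}^* - q^2 E_{\beta_3}^*$---is implicit in the paper as the $*$-image of the relations already used in the proof of \cref{lem:rel-e-es}.
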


\begin{proof}
These follow easily from the commutation relations of \cref{lem:rel-e-es}.
\end{proof}

\subsection{Moving Levi terms to the right}

Our immediate goal is to rewrite the Casimir $\Cas$ by bringing the elements $E_{\beta_1}$ and $E_{\beta_1}^*$ all the way to the right. Note that these elements belong to the quantized Levi factor $\Uql$. The reason for doing so will be clear once we include the Clifford algebra part, that is the algebra $\mathrm{End}(\Lq)$.

Notice that the classical part of the Casimir, namely $\Cas_c$ in the notation of \cref{prop:casimir-rmatrix}, is already of this form. For this reason we will focus on the quantum part $\Cas_q$.

\begin{lemma}
\label{lem:quantum-casimir}
We have the identity
\[
\begin{split}
\Cas_q & = \qdiff [2]_q^2 q^{-5} \left( E_{\beta_2}^* E_{\beta_2} - \qdiff [2]_q^{-1} q E_{\beta_3}^*E_{\beta_3} (1 - [2]_q^{-1} E_{\beta_1}^* E_{\beta_1}) - E_{\beta_4}^* E_{\beta_4} (1 - \qdiff q^{-2} E_{\beta_1}^* E_{\beta_1}) \right) K_{2 \lambda_1}^{-1} \\
& - \qdiff [2]_q q^{-5} \left( q^2 E_{\beta_2}^* E_{\beta_3} E_{\beta_1} + q^2 E_{\beta_3}^* E_{\beta_2} E_{\beta_1}^* + E_{\beta_3}^* E_{\beta_4} E_{\beta_1} + E_{\beta_4}^* E_{\beta_3} E_{\beta_1}^* \right) K_{2 \lambda_1}^{-1}.
\end{split}
\]
\end{lemma}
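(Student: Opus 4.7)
The plan is a direct algebraic rewriting. The expression for $\Cas_q$ given in \cref{prop:casimir-rmatrix} contains exactly four summands with $E_{\beta_1}^*$ in the leftmost position, namely the monomials $E_{\beta_1}^* E_{\beta_3}^* E_{\beta_2}$, $E_{\beta_1}^* E_{\beta_4}^* E_{\beta_3}$, $E_{\beta_1}^* E_{\beta_3}^* E_{\beta_3} E_{\beta_1}$ and $E_{\beta_1}^* E_{\beta_4}^* E_{\beta_4} E_{\beta_1}$. Since $(E_{\beta_i} E_{\beta_1})^* = E_{\beta_1}^* E_{\beta_i}^*$, each of these matches the left-hand side of one of the four identities stated in \cref{lem:rel-rewrite-cas}.

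First I would substitute these four identities into the expression for $\Cas_q$. Each rewriting either pushes $E_{\beta_1}^*$ past a quantum root vector to the rightmost position, yielding a term already of the shape in the target formula, or cancels $E_{\beta_1}^*$ against a commutator from \cref{lem:rel-e-es}, yielding a term of lower degree in the Levi factor. The remainder of the argument is bookkeeping: collect the coefficient of each distinct monomial produced and compare it with the corresponding coefficient on the right-hand side of the claimed formula.

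Several monomials, namely $E_{\beta_2}^* E_{\beta_2}$, $E_{\beta_4}^* E_{\beta_4}$, $E_{\beta_3}^* E_{\beta_3} E_{\beta_1}^* E_{\beta_1}$, $E_{\beta_4}^* E_{\beta_4} E_{\beta_1}^* E_{\beta_1}$, $E_{\beta_3}^* E_{\beta_2} E_{\beta_1}^*$ and $E_{\beta_4}^* E_{\beta_3} E_{\beta_1}^*$, arise from a single substitution, so their coefficients match immediately after a short power-of-$q$ check. The main step, and essentially the only real computation, is the verification of the coefficients of $E_{\beta_3}^* E_{\beta_3}$, $E_{\beta_2}^* E_{\beta_3} E_{\beta_1}$ and $E_{\beta_3}^* E_{\beta_4} E_{\beta_1}$, each of which receives contributions from two or three of the substituted terms. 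These coefficients collapse to the expected form via the elementary simplifications $q^2 - 1 = q \qdiff$ and $q - [2]_q = -q^{-1}$, which combine mixed powers of $q$ into the common prefactor $\qdiff [2]_q q^{-5}$. Once these three coefficients are verified, the claimed identity holds term by term.
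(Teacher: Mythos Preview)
Your proposal is correct and follows essentially the same approach as the paper: both substitute the four identities of \cref{lem:rel-rewrite-cas} into the expression for $\Cas_q$ from \cref{prop:casimir-rmatrix} and then collect coefficients. The only difference is organizational---the paper first groups the terms into two auxiliary quantities $\Cas_q'$ and $\Cas_q''$ before substituting, whereas you substitute directly---but the computation and the ingredients are the same.
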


\begin{proof}
Let us begin by writing
\[
\Cas_q = - \qdiff [2]_q q^{-5} \Cas_q^\prime K_{2 \lambda_1}^{-1} + \qdiff^2 q^{-4} \Cas_q^{\prime \prime} K_{2 \lambda_1}^{-1},
\]
which amounts to the definitions
\[
\begin{split}
\Cas_q^\prime & = (E_{\beta_3} E_{\beta_1})^* E_{\beta_2} + q^{-2} (E_{\beta_4} E_{\beta_1})^* E_{\beta_3} + E_{\beta_2}^* E_{\beta_3} E_{\beta_1} + q^{-2} E_{\beta_3}^* E_{\beta_4} E_{\beta_1}, \\
\Cas_q^{\prime \prime} & = (E_{\beta_3} E_{\beta_1})^* E_{\beta_3} E_{\beta_1} + [2]_q^2 q^{-3} (E_{\beta_4} E_{\beta_1})^* E_{\beta_4} E_{\beta_1}.
\end{split}
\]
Using the commutation relations of \cref{lem:rel-rewrite-cas} we obtain
\[
\begin{split}
\Cas_q^\prime & = - [2]_q E_{\beta_2}^* E_{\beta_2} + \qdiff q E_{\beta_3}^*E_{\beta_3} + [2]_q E_{\beta_4}^* E_{\beta_4} \\
& + E_{\beta_2}^* E_{\beta_3} E_{\beta_1} + q^2 E_{\beta_3}^* E_{\beta_2} E_{\beta_1}^* + q^{-2} E_{\beta_3}^* E_{\beta_4} E_{\beta_1} + E_{\beta_4}^* E_{\beta_3} E_{\beta_1}^*.
\end{split}
\]
Similarly for the other term we get
\[
\begin{split}
\Cas_q^{\prime \prime} & = E_{\beta_3}^* E_{\beta_3} E_{\beta_1}^* E_{\beta_1} + [2]_q^2 q^{-3} E_{\beta_4}^* E_{\beta_4} E_{\beta_1}^* E_{\beta_1} \\
& - [2]_q E_{\beta_2}^* E_{\beta_3} E_{\beta_1} - [2]_q q^{-2} E_{\beta_3}^* E_{\beta_4} E_{\beta_1}.
\end{split}
\]
Putting all together we obtain
\[
\begin{split}
\frac{q^5}{\qdiff [2]_q} \Cas_q K_{2 \lambda_1} & = [2]_q E_{\beta_2}^* E_{\beta_2} - \qdiff q E_{\beta_3}^*E_{\beta_3} - [2]_q E_{\beta_4}^* E_{\beta_4} \\
& - E_{\beta_2}^* E_{\beta_3} E_{\beta_1} - q^2 E_{\beta_3}^* E_{\beta_2} E_{\beta_1}^* - q^{-2} E_{\beta_3}^* E_{\beta_4} E_{\beta_1} - E_{\beta_4}^* E_{\beta_3} E_{\beta_1}^* \\
& + \qdiff [2]_q^{-1} q E_{\beta_3}^* E_{\beta_3} E_{\beta_1}^* E_{\beta_1} + \qdiff [2]_q q^{-2} E_{\beta_4}^* E_{\beta_4} E_{\beta_1}^* E_{\beta_1} \\
& - \qdiff q E_{\beta_2}^* E_{\beta_3} E_{\beta_1} - \qdiff q^{-1} E_{\beta_3}^* E_{\beta_4} E_{\beta_1}.
\end{split}
\]
After some simplifications, this gives the result.
\end{proof}

It is now easy to obtain the expression for $\Cas$ we were looking for.

\begin{proposition}
\label{prop:cas-to-the-right}
The central element $\Cas$ can be written as
\[
\begin{split}
\Cas & = (q^{-4} K_{2 \lambda_1}^{-1} + q^{-2} K_{2 \lambda_2}^{-1} + q^2 K_{2 \lambda_3}^{-1} + q^4 K_{2 \lambda_4}^{-1}) / (q - q^{-1})^2 \\
& + E_{\beta_1}^* E_{\beta_1} (q^{-5} K_{2 \lambda_1}^{-1} + q K_{2 \lambda_3}^{-1}) + E_{\beta_2}^* E_{\beta_2} [2]_q^2 q^{-4} K_{2 \lambda_1}^{-1} \\
& + E_{\beta_3}^* E_{\beta_3} ((q^{-5} - Q q^{-2}) K_{2 \lambda_1}^{-1} + q^{-1} K_{2 \lambda_2}^{-1} + \qdiff^2 q^{-4} E_{\beta_1}^* E_{\beta_1} K_{2 \lambda_1}^{-1}) \\
& + E_{\beta_4}^* E_{\beta_4} [2]_q^2 (q^{-4} K_{2 \lambda_2}^{-1} - \qdiff q^{-5} K_{2 \lambda_1}^{-1} + \qdiff^2 q^{-7} E_{\beta_1}^* E_{\beta_1} K_{2 \lambda_1}^{-1}) \\
& - Q [2]_q q^{-5} (q^2 E_{\beta_2}^* E_{\beta_3} E_{\beta_1} + q^2 E_{\beta_3}^* E_{\beta_2} E_{\beta_1}^* + E_{\beta_3}^* E_{\beta_4} E_{\beta_1} + E_{\beta_4}^* E_{\beta_3} E_{\beta_1}^*) K_{2 \lambda_1}^{-1}.
\end{split}
\]
\end{proposition}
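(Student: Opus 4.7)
The plan is to combine the two preceding results in an essentially organizational manner. From \cref{prop:casimir-rmatrix} one has the decomposition $\Cas = \Cas_c + \Cas_q$. The classical piece $\Cas_c$ is already displayed in the desired form: every summand is either a pure $K$-term or of the shape $E_{\beta_i}^* E_{\beta_i} K_{2 \lambda_k}^{-1}$ for some $i \in \{1,2,3,4\}$, so no commutation work is needed there. The quantum piece $\Cas_q$ as given in \cref{prop:casimir-rmatrix} contains products such as $E_{\beta_1}^* E_{\beta_3}^* E_{\beta_2}$ in which $E_{\beta_1}^*$ stands on the left, but this reordering has been performed once and for all in \cref{lem:quantum-casimir}, yielding an expression for $\Cas_q$ in which every factor of $E_{\beta_1}$ or $E_{\beta_1}^*$ has been pushed to the right of the remaining quantum root vectors.

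With these two inputs in hand, I would substitute the expression for $\Cas_q$ from \cref{lem:quantum-casimir} into $\Cas = \Cas_c + \Cas_q$ and group the resulting summands by operator pattern: the diagonal pairs $E_{\beta_i}^* E_{\beta_i}$ for $i = 1, 2, 3, 4$ (possibly multiplied on the right by $E_{\beta_1}^* E_{\beta_1}$), the isolated Cartan $K$-terms, and the four mixed three-factor terms $E_{\beta_2}^* E_{\beta_3} E_{\beta_1}$, $E_{\beta_3}^* E_{\beta_2} E_{\beta_1}^*$, $E_{\beta_3}^* E_{\beta_4} E_{\beta_1}$, and $E_{\beta_4}^* E_{\beta_3} E_{\beta_1}^*$. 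For each group the scalar coefficients are collected and simplified using $\qdiff = q - q^{-1}$ and $[2]_q = q + q^{-1}$ until they match the coefficients stated in the proposition.

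As quick sanity checks: the coefficient of $E_{\beta_2}^* E_{\beta_2} K_{2 \lambda_1}^{-1}$ combines $[2]_q^2 q^{-6}$ from $\Cas_c$ with $\qdiff [2]_q^2 q^{-5}$ from \cref{lem:quantum-casimir}, summing to $[2]_q^2 q^{-4}$ as claimed; the four mixed three-factor summands with prefactor $-\qdiff [2]_q q^{-5}$ arise solely from $\Cas_q$ and transfer directly; the coefficients of $E_{\beta_3}^* E_{\beta_3} E_{\beta_1}^* E_{\beta_1} K_{2 \lambda_1}^{-1}$ and $E_{\beta_4}^* E_{\beta_4} E_{\beta_1}^* E_{\beta_1} K_{2 \lambda_1}^{-1}$ come from \cref{lem:quantum-casimir} alone; and the pure $K$-terms, the $E_{\beta_1}^* E_{\beta_1} K$ contributions, and the contribution of $E_{\beta_3}^* E_{\beta_3} q^{-1} K_{2 \lambda_2}^{-1}$ and $[2]_q^2 E_{\beta_4}^* E_{\beta_4} q^{-4} K_{2 \lambda_2}^{-1}$ are inherited unchanged from $\Cas_c$. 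The only obstacle is careful bookkeeping of scalar prefactors; no new commutation relations beyond those already established in \cref{lem:rel-e-es} are required.
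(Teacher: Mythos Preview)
Your proposal is correct and matches the paper's own proof exactly: the paper simply says the result follows from \cref{prop:casimir-rmatrix} and \cref{lem:quantum-casimir}, which is precisely the combination you describe, and your sanity checks on the scalar coefficients are all accurate.
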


\begin{proof}
This follows from \cref{prop:casimir-rmatrix} and \cref{lem:quantum-casimir}.
\end{proof}

\subsection{Embedding the Casimir}

So far we have only considered $\Cas$ as an element of $\Uqg$, but our goal will be to compare it with $D^2$ acting on $\spinbun$. Clearly we can embed $\Cas$ into $\Uqg \otimes \mathrm{End}(\Lq)$ by $\Cas \otimes 1$, but we also want to take into account the relations of $\spinbun$. This will be done by introducing an appropriate quotient of $\Uqg \otimes \mathrm{End}(\Lq)$.

First we fix some notation: let $X \in \Uqg$, $T \in \mathrm{End}(\Lq)$ and $Y \in \Uql$.
We consider $\Uqg$ as a right $\Uql$-module by right multiplication, which we write as $X \cdot Y = X Y$. On the other hand we consider $\mathrm{End}(\Lq)$ as a left $\Uql$-module by $Y \cdot T = T \circ S(Y)$, where on the right-hand side we have composition of operators.

\begin{definition}
With the above notation, we can form the tensor product of $\Uql$-modules
\[
\algspace := \Uqg \otimes_{\Uql} \mathrm{End}(\Lq).
\]
\end{definition}

Let us explain the motivation for this definition. With our definition of the actions of $\Uql$, the identity $X \cdot Y \otimes_{\Uql} T = X \otimes_{\Uql} Y \cdot T$ in $\algspace$ reads explicitly
\begin{equation}
\label{eq:relation-cliff}
X Y \otimes_{\Uql} T = X \otimes_{\Uql} T S(Y).
\end{equation}
On the other hand, if we consider $\xi \in \spinbun$ then we have
\[
(X Y \otimes T) \xi = (X \otimes T S(Y)) \xi,
\]
due to the defining property of $\spinbun$. Therefore $\algspace$ provides an algebraic model for operators on $\spinbun$. We will also use the relation $\sim$ from \cref{def:equiv-levi}, naturally extended to $\algspace$.

In the following we will omit the subscript $\Uql$ from the tensor product for readability.

\begin{proposition}
\label{prop:casimir-clifford}
Consider $\Cas \otimes 1 \in \algspace$. Then we have the identity
\[
\begin{split}
\Cas \otimes 1 & \sim E_{\xi_1}^* E_{\xi_1} \otimes [2]_q^2 q^{-4} K_{2 \lambda_1} \\
& + E_{\xi_2}^* E_{\xi_2} \otimes ((q^{-5} - Q q^{-2}) K_{2 \lambda_1} + q^{-1} K_{2 \lambda_2} + \qdiff^2 q^{-4} K_{2 \lambda_1} S(E_1^* E_1)) \\
& + E_{\xi_3}^* E_{\xi_3} \otimes [2]_q^2 (q^{-4} K_{2 \lambda_2} - \qdiff q^{-5} K_{2 \lambda_1} + \qdiff^2 q^{-7} K_{2 \lambda_1} S(E_1^* E_1)) \\
& - E_{\xi_1}^* E_{\xi_2} \otimes Q [2]_q q^{-3} K_{2 \lambda_1} S(E_1) - E_{\xi_2}^* E_{\xi_1} \otimes Q [2]_q q^{-3} (K_{2 \lambda_1} S(E_1))^* \\
& - E_{\xi_2}^* E_{\xi_3} \otimes Q [2]_q q^{-5} K_{2 \lambda_1} S(E_1) - E_{\xi_3}^* E_{\xi_2} \otimes Q [2]_q q^{-5} (K_{2 \lambda_1} S(E_1))^*.
\end{split}
\]
\end{proposition}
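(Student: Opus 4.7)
The plan is to begin from the explicit formula for $\Cas$ in \cref{prop:cas-to-the-right} and, term by term, use the defining relation \eqref{eq:relation-cliff} of $\algspace$ to move all Levi factors from the left of the tensor product to the right. The first observation is that the two leading contributions in \cref{prop:cas-to-the-right}, namely the pure Cartan sum $(q^{-4} K_{2\lambda_1}^{-1} + q^{-2} K_{2\lambda_2}^{-1} + q^2 K_{2\lambda_3}^{-1} + q^4 K_{2\lambda_4}^{-1})/(q-q^{-1})^2$ together with the term $E_{\beta_1}^* E_{\beta_1}(q^{-5} K_{2\lambda_1}^{-1} + q K_{2\lambda_3}^{-1})$, lie entirely in $\Uql$ (note that $\beta_1 = \alpha_1$ so $E_{\beta_1} = E_1 \in \Uql$), so that when paired with $1 \in \mathrm{End}(\Lq)$ they are $\sim 0$ under the natural extension of \cref{def:equiv-levi} to $\algspace$.

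Next I would use the identifications $E_{\beta_2} = E_{\xi_1}$, $E_{\beta_3} = E_{\xi_2}$, $E_{\beta_4} = E_{\xi_3}$, forced by the choice of reduced decomposition of $w_0$ made in \cref{sec:lagrangian}, to split each remaining summand of $\Cas \otimes 1$ as a product of a left factor built from $E_{\xi_i}^*, E_{\xi_j}$ and a right factor $Y \in \Uql$ built from $K_{2\lambda_k}^{-1}$, $E_1$, $E_1^*$. The three diagonal contributions $E_{\beta_{k+1}}^* E_{\beta_{k+1}}(\cdots)$ become $E_{\xi_k}^* E_{\xi_k} \cdot Y_k$, while each of the four cross summands in the last line of \cref{prop:cas-to-the-right} has the shape $E_{\xi_i}^* E_{\xi_j} \cdot E_1^{(*)} K_{2\lambda_1}^{-1}$.

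The third step is to apply $X Y \otimes 1 = X \otimes S(Y)$ to each of these summands. Since $S$ is an anti-algebra homomorphism with $S(K_\lambda) = K_\lambda^{-1}$, this directly reproduces the Levi elements appearing on the right of the target formula in the diagonal case. For the cross terms the only non-trivial identification required is $K_{2\lambda_1} S(E_1^*) = (K_{2\lambda_1} S(E_1))^*$, which follows from a short direct computation using $K_{2\lambda_1}^* = K_{2\lambda_1}$, $S(E_1) = -K_1^{-1} E_1$, and $E_1^* = F_1 K_1$, together with the commutation between Cartan elements and $F_1$: both sides reduce to $-F_1 K_{2\lambda_1}$.

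The main obstacle is pure bookkeeping: one must track the prefactor $-Q [2]_q q^{-5}$ multiplying the cross terms, the powers of $q$ introduced by the identifications $E_{\beta_{j+1}} = E_{\xi_j}$, and the distribution of $S$ across products $K_{2\lambda_1}^{-1} \cdot E_1^{(*)}$ in $\Uql$. No new algebraic input beyond \cref{prop:cas-to-the-right} and the defining tensor product relation \eqref{eq:relation-cliff} of $\algspace$ is required.
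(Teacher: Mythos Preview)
Your proposal is correct and follows exactly the same route as the paper's own proof: start from \cref{prop:cas-to-the-right}, discard the two summands lying in $\Uql$ via $\sim$, relabel $E_{\beta_{k+1}} = E_{\xi_k}$, push each Levi factor across the tensor product using \eqref{eq:relation-cliff} with $T = 1$, and finish with the star identity $K_{2\lambda_1} S(E_1^*) = (K_{2\lambda_1} S(E_1))^*$. One small slip in your bookkeeping remark: the identification $E_{\beta_{j+1}} = E_{\xi_j}$ introduces no extra powers of $q$; the differing prefactors $q^{-3}$ versus $q^{-5}$ on the cross terms come directly from the explicit $q^2$ factors already present in the last line of \cref{prop:cas-to-the-right}.
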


\begin{proof}
Starting from \cref{prop:cas-to-the-right} and using the relation $\sim$ we get
\[
\begin{split}
\Cas \otimes 1 & \sim [2]_q^2 E_{\xi_1}^* E_{\xi_1} q^{-4} K_{2 \lambda_1}^{-1} \otimes 1 \\
& + E_{\xi_2}^* E_{\xi_2} (q^{-7} K_{2 \lambda_1}^{-1} + q^{-1} K_{2 \lambda_2}^{-1} - \qdiff^2 [2]_q q^{-4} K_{2 \lambda_1}^{-1} + \qdiff^2 q^{-4} E_1^* E_1 K_{2 \lambda_1}^{-1}) \otimes 1 \\
& + [2]_q^2 E_{\xi_3}^* E_{\xi_3} (q^{-4} K_{2 \lambda_2}^{-1} - \qdiff q^{-5} K_{2 \lambda_1}^{-1} + \qdiff^2 q^{-7} E_1^* E_1 K_{2 \lambda_1}^{-1}) \otimes 1 \\
& - Q [2]_q q^{-5} (q^2 E_{\xi_1}^* E_{\xi_2} E_1 + q^2 E_{\xi_2}^* E_{\xi_1} E_1^* + E_{\xi_2}^* E_{\xi_3} E_1 + E_{\xi_3}^* E_{\xi_2} E_1^*) K_{2 \lambda_1}^{-1} \otimes 1.
\end{split}
\]
Next we use the defining relation of $\algspace$, that is \eqref{eq:relation-cliff}. Then we obtain
\[
\begin{split}
\Cas \otimes 1 & \sim E_{\xi_1}^* E_{\xi_1} \otimes [2]_q^2 q^{-4} K_{2 \lambda_1} \\
& + E_{\xi_2}^* E_{\xi_2} \otimes (q^{-7} K_{2 \lambda_1} + q^{-1} K_{2 \lambda_2} - \qdiff^2 [2]_q q^{-4} K_{2 \lambda_1} + \qdiff^2 q^{-4} K_{2 \lambda_1} S(E_1^* E_1)) \\
& + E_{\xi_3}^* E_{\xi_3} \otimes [2]_q^2 (q^{-4} K_{2 \lambda_2} - \qdiff q^{-5} K_{2 \lambda_1} + \qdiff^2 q^{-7} K_{2 \lambda_1} S(E_1^* E_1)) \\
& - E_{\xi_1}^* E_{\xi_2} \otimes Q [2]_q q^{-3} K_{2 \lambda_1} S(E_1) - E_{\xi_2}^* E_{\xi_1} \otimes Q [2]_q q^{-3} K_{2 \lambda_1} S(E_1^*) \\
& - E_{\xi_2}^* E_{\xi_3} \otimes Q [2]_q q^{-5} K_{2 \lambda_1} S(E_1) - E_{\xi_3}^* E_{\xi_2} \otimes Q [2]_q q^{-5} K_{2 \lambda_1} S(E_1^*).
\end{split}
\]
Finally using the identity $K_{2 \lambda_1} S(E_1^*) = (K_{2 \lambda_1} S(E_1))^* $ we get the result.
\end{proof}

\begin{remark}
As already mentioned in the introduction, the definition of the space $\algspace$ is the main difference with respect to \cite{mat-par}. Trying to compare $D^2$ with $\Cas \otimes 1$ within $\Uqg \otimes \mathrm{End}(\Lq)$ does not lead to a positive result, as shown in the cited paper.
\end{remark}

\section{More about the exterior algebra}
\label{sec:more-exterior}

In this section we derive explicit formulae for the action of $\Lq$ on itself, as well as the action of the Levi factor $\Uql$ on $\Lq$. These formulae will be used in the next section to derive various identities for the Clifford algebra $\mathrm{End}(\Lq)$.

We begin by fixing a basis of the exterior algebra $\Lq$. We have the element $1$ in degree $0$, the elements $\{ y_i \}_{i = 1}^3$ in degree $1$ and in higher degrees we take
\[
y_{2 1} := y_2 \wedge y_1, \quad
y_{3 1} := y_3 \wedge y_1, \quad
y_{3 2} := y_3 \wedge y_2, \quad
y_{3 2 1} := y_3 \wedge y_2 \wedge y_1.
\]

\begin{lemma}
\label{lem:levi-Lq}
The action of $\Uql$ on $\Lq$ is given by
\begin{center}
{
\renewcommand{\arraystretch}{1.3}
\begin{tabular}{c | c | c | c | c | c | c | c | c}
& $1$ & $y_1$ & $y_2$ & $y_3$ & $y_{2 1}$ & $y_{3 1}$ & $y_{3 2}$ & $y_{3 2 1}$ \\
\hline
$K_1$ & $1$ & $q^{-2} y_1$ & $y_2$ & $q^2 y_3$ & $q^{-2} y_{2 1}$ & $y_{3 1}$ & $q^2 y_{3 2}$ & $y_{3 2 1}$ \\
\hline
$K_2$ & $1$ & $y_1$ & $q^{-2} y_2$ & $q^{-4} y_3$ & $q^{-2} y_{2 1}$ & $q^{-4} y_{3 1}$ & $q^{-6} y_{3 2}$ & $q^{-6} y_{3 2 1}$ \\
\hline
$E_1$ & $0$ & $-[2]_q y_2$ & $- q^2 y_3$ & $0$ & $- y_{3 1}$ & $-[2]_q q^2 y_{3 2}$ & $0$ & $0$ \\
\hline
$F_1$ & $0$ & $0$ & $- y_1$ & $-[2]_q q^{-2} y_2$ & $0$ & $-[2]_q y_{2 1}$ & $- q^{-2} y_{3 1}$ & $0$
\end{tabular}
}
\end{center}
\end{lemma}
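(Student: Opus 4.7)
The strategy is to extend the action of $\Uql$ on the degree-one generators, already computed in \cref{lem:levi-um}, to the higher-degree basis elements of $\Lq$ by using the fact that $\Lq$ is a $\Uql$-module algebra. Concretely, for any $X \in \Uql$ and $y, y' \in \Lq$, we have $X(y \wedge y') = (X_{(1)} y) \wedge (X_{(2)} y')$. Applied to the coproducts
\[
\Delta(K_i) = K_i \otimes K_i, \quad
\Delta(E_1) = E_1 \otimes 1 + K_1 \otimes E_1, \quad
\Delta(F_1) = F_1 \otimes K_1^{-1} + 1 \otimes F_1,
\]
this yields Leibniz-style formulae that reduce the computation to the degree-one case handled in \cref{lem:levi-um}.

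For the Cartan elements the entries are immediate: since $\Delta(K_i)$ is group-like, the action on $y_{ij}$ and $y_{321}$ is determined by the product of the $K_i$-weights of the factors. The action on the unit $1$ is trivial by the counit, and all the top-row entries involving $K_1$ and $K_2$ follow by inspection.

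For $E_1$ and $F_1$, each entry is obtained by a short computation. The only subtle point is the reduction of the quadratic piece $y_2 \wedge y_2$, which by the relations of $\Lq$ equals $Q \frac{q}{[2]_q} y_1 \wedge y_3 = -Q \frac{q}{[2]_q} y_{31}$. As a representative case,
\[
E_1 y_{21} = (E_1 y_2) \wedge y_1 + (K_1 y_2) \wedge (E_1 y_1) = -q^2 y_{31} - [2]_q y_2 \wedge y_2,
\]
and substituting the above identity for $y_2 \wedge y_2$ collapses this to $-y_{31}$, matching the table. The remaining entries for $E_1$ and $F_1$ follow by identical manipulations, using $y_i \wedge y_i = 0$ for $i = 1, 3$ and the braided commutativity relations to place wedge products in the chosen basis form. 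The vanishing entries (such as $E_1 y_{32}$, $F_1 y_{21}$, and the whole row at $y_{321}$) can either be verified directly from the Leibniz rule or read off by weight considerations, since there is no basis element of the correct $\Uql$-weight in the relevant degree.

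The main obstacle is nothing conceptual: it is purely bookkeeping, making sure that each appearance of $y_2 \wedge y_2$ is correctly rewritten via the $\Lq$-relation, and that the sign coming from $y_1 \wedge y_3 = -y_{31}$ is propagated consistently. Once this is done systematically on all eight basis vectors, the table is filled in.
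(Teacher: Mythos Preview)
Your proposal is correct and follows exactly the same approach as the paper: use the degree-one action from \cref{lem:levi-um} and extend via the $\Uql$-module algebra structure $X(y\wedge y')=(X_{(1)}y)\wedge(X_{(2)}y')$. The paper simply omits the computations you spell out (including the representative case $E_1 y_{21}$, which you handle correctly), so your write-up is in fact more detailed than the original proof.
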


\begin{proof}
In degree $0$ we have the trivial $\Uql$-module. In degree $1$ we have already computed the action of $\Uql$ on $\lieum$ in \cref{lem:levi-um}. For higher degrees we can compute the action by using the fact that $\Lq$ is a $\Uql$-module algebra, that is $X (y \wedge y^\prime) = (X_{(1)} y) \wedge (X_{(2)} y^\prime)$. The result then follows from simple computations that we omit.
\end{proof}

\begin{corollary}
\label{cor:iso-exterior}
We have an isomorphism $\Lambda_q^1(\lieum) \cong \Lambda_q^2(\lieum)$ of $U_q(\mathfrak{l}_{ss})$-modules given by
\[
y_1 \mapsto y_{2 1}, \quad
y_2 \mapsto \frac{1}{[2]_q} y_{3 1}, \quad
y_3 \mapsto y_{3 2}.
\]
\end{corollary}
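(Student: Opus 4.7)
The plan is to verify the claim directly from the action tables in \cref{lem:levi-um} and \cref{lem:levi-Lq}. First I would note that both $\Lambda_q^1(\lieum)$ and $\Lambda_q^2(\lieum)$ are three-dimensional and carry the same $K_1$-weights $q^{-2}, 1, q^2$: in degree one these weight spaces are spanned by $y_1, y_2, y_3$ respectively, while in degree two they are spanned by $y_{2 1}, y_{3 1}, y_{3 2}$ respectively. Hence the proposed map is automatically weight-preserving, and in particular a linear isomorphism; all that remains is to check equivariance under $E_1$ and $F_1$, since $K_1, K_2$ are handled by the weight observation and $\Uqlss$ is generated by $E_1, F_1, K_1$.

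The verification then reduces to reading off six entries from the table in \cref{lem:levi-Lq} and matching them with the images under the proposed map. For example, $E_1 y_1 = -[2]_q y_2$ in degree one, whose image under the map is $-[2]_q \cdot \frac{1}{[2]_q} y_{3 1} = -y_{3 1}$; independently, $E_1 y_{2 1} = -y_{3 1}$ in degree two, so the two agree. Likewise $E_1 y_2 = -q^2 y_3$ pairs with $E_1 \bigl(\tfrac{1}{[2]_q} y_{3 1}\bigr) = -q^2 y_{3 2}$, and $E_1 y_3 = 0$ pairs with $E_1 y_{3 2} = 0$. The three identities for $F_1$ are checked in the same way; the factor $1/[2]_q$ on $y_2$ is precisely what makes the various $[2]_q$ coefficients between degree one and degree two cancel correctly.

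Conceptually this corollary is a shadow of Schur's lemma: as noted after \cref{lem:levi-up}, $\Uqlss = U_q(\mathfrak{sl}_2)$ acts on $\lieum$ as the irreducible module $V(2\omega_1)$, and since $\Lambda_q^2(\lieum)$ has the same character any $\Uqlss$-equivariant isomorphism exists and is unique up to an overall scalar. The content of the corollary is therefore merely pinning down this scalar in each of the three weight spaces, given the chosen basis $y_{i j} = y_i \wedge y_j$. For this reason no genuine obstacle is anticipated; the only point requiring care is the bookkeeping of the $[2]_q$ normalizations, which is the whole reason the middle basis vector needs the explicit factor $1/[2]_q$.
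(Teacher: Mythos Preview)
Your proof is correct and follows the same approach as the paper, which simply states that ``this is an immediate verification using the formulae in \cref{lem:levi-Lq}''; you have spelled out the details that the paper omits. One small remark: your aside that ``$K_1, K_2$ are handled by the weight observation'' is slightly off, since the map is in fact \emph{not} $K_2$-equivariant (as the paper notes in the remark following the corollary), but this is harmless because, as you correctly observe, $K_2 \notin \Uqlss$ and only $K_1, E_1, F_1$ need to be checked.
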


\begin{proof}
This is an immediate verification using the formulae in \cref{lem:levi-Lq}.
\end{proof}

\begin{remark}
Notice that this is an isomorphism of $U_q(\mathfrak{l}_{ss})$-modules but not of $\Uql$-modules, since for instance the action of $K_2$ on the elements $y_1$ and $y_{2 1}$ is different.
\end{remark}

Next we want to introduce a Hermitian inner product on $\Lq$, which will allow us to define adjoints of elements in $\mathrm{End}(\Lq)$.
It can be seen from \cref{lem:levi-Lq} that the subspaces $\Lambda_q^k(\lieum)$ are simple $\Uql$-modules. Hence in each degree there is a unique (up to a constant) Hermitian inner product $(\cdot, \cdot) : \Lambda_q^k(\lieum) \otimes \Lambda_q^k(\lieum) \to \mathbb{C}$, while components of different degrees are orthogonal. We determine these in the next lemma.

\begin{lemma}
\label{lem:inner-prod}
The $\Uql$-invariant inner products on $\Lq$ are given in terms of four non-zero coefficients $\{ c_k \}_{k = 0}^3$ as follows.
In degree $0$ we have $(1, 1) = c_0$. In degree $1$ we have
\[
(y_1, y_1) = c_1, \quad (y_2, y_2) = \frac{c_1}{[2]_q}, \quad (y_3, y_3) = c_1 q^{-2}.
\]
In degree $2$ we have
\[
(y_{2 1}, y_{2 1}) = c_2, \quad (y_{3 1}, y_{3 1}) = c_2 [2]_q, \quad (y_{3 2}, y_{3 2}) = c_2 q^{-2}.
\]
Finally in degree $3$ we have $(y_{3 2 1}, y_{3 2 1}) = c_3$.
\end{lemma}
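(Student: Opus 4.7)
The plan is to use the fact, readable off from \cref{lem:levi-Lq}, that each graded component $\Lambda_q^k(\lieum)$ is a simple $\Uql$-module for $k=0,1,2,3$ (each has a unique highest-weight vector killed by $E_1$, namely $1$, $y_3$, $y_{32}$, $y_{321}$ respectively). By a Schur-type argument, the space of $\Uql$-invariant Hermitian inner products on each $\Lambda_q^k(\lieum)$ is one-dimensional, giving rise to a single positive constant $c_k$ per degree. Moreover, the four modules have pairwise distinct highest weights (and hence are pairwise non-isomorphic), which forces components of different degrees to be orthogonal under any $\Uql$-invariant Hermitian form. This reduces the lemma to determining the internal ratios of inner products within each graded piece.

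Within a fixed degree I would propagate from the highest-weight vector using the invariance relation $(Xy, y') = (y, X^* y')$ applied to $X = E_1$, together with the $*$-structure $E_1^* = F_1 K_1$ and the action formulas in \cref{lem:levi-um} and \cref{lem:levi-Lq}. For example, in degree one, $E_1 y_1 = -[2]_q y_2$ and $F_1 K_1 y_2 = -y_1$ give
\[
-[2]_q (y_2, y_2) = (E_1 y_1, y_2) = (y_1, F_1 K_1 y_2) = -(y_1, y_1),
\]
so $(y_2, y_2) = c_1 / [2]_q$ once we set $(y_1, y_1) = c_1$. A second application, using $E_1 y_2 = -q^2 y_3$ and $F_1 K_1 y_3 = -[2]_q y_2$, fixes $(y_3, y_3) = c_1 q^{-2}$. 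The degree-two case is entirely analogous: starting from $(y_{21}, y_{21}) = c_2$, the identities $E_1 y_{21} = -y_{31}$ with $F_1 K_1 y_{31} = -[2]_q y_{21}$ yield $(y_{31}, y_{31}) = [2]_q c_2$, and then $E_1 y_{31} = -[2]_q q^2 y_{32}$ with $F_1 K_1 y_{32} = -y_{31}$ produces $(y_{32}, y_{32}) = c_2 q^{-2}$.

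Degrees zero and three are one-dimensional, so every positive scalar defines a $\Uql$-invariant inner product, which accounts for the free parameters $c_0$ and $c_3$. No step here is genuinely difficult; the whole lemma reduces to bookkeeping with the action tables and the $*$-structure of $\Uql$. The only point requiring a short verification is the simplicity claim in the first paragraph, but this is immediate from inspecting the highest-weight vectors in \cref{lem:levi-Lq}.
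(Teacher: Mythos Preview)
Your proposal is correct and follows essentially the same approach as the paper: both use the invariance relation $(E_1 y, y') = (y, E_1^* y')$ with $E_1^* = F_1 K_1$ to propagate the inner product along each simple graded piece, starting from $(y_1,y_1)=c_1$ in degree one. The only difference is cosmetic: for degree two the paper invokes the $U_q(\mathfrak{l}_{ss})$-module isomorphism $\Lambda_q^1(\lieum)\cong\Lambda_q^2(\lieum)$ of \cref{cor:iso-exterior} to transport the degree-one result, whereas you redo the two-step computation directly; and for orthogonality between degrees the paper simply notes that all eight basis vectors have distinct $\Uql$-weights, while you argue via non-isomorphism of the four simple pieces. (A small verbal slip: you say you ``propagate from the highest-weight vector'' but in fact start from the lowest-weight vector $y_1$; the calculations themselves are unaffected.)
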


\begin{proof}
The statement in degrees $0$ and $3$ is trivial, since they are one-dimensional. Also observe that elements of different weights are orthogonal, since we must have $(K_\lambda y, y^\prime) = (y, K_\lambda y^\prime)$.
Now consider degree $1$ and write $(y_1, y_1) = c_1$. We will use the formulae for the action of $\Uql$ given in \cref{lem:levi-Lq}. Then using $E_1^* = F_1 K_1 = q^2 K_1 F_1$ we compute
\[
(y_2, y_2) = - \frac{1}{[2]_q} (E_1 y_1, y_2) = - \frac{q^2}{[2]_q} (y_1, K_1 F_1 y_2) = \frac{q^2}{[2]_q} q^{-2} (y_1, y_1) = \frac{c_1}{[2]_q}.
\]
Similarly we have
\[
(y_3, y_3) = - q^{-2} (E_1 y_2, y_3) = - (y_2, K_1 F_1 y_3) = [2]_q q^{-2} (y_2, y_2) = q^{-2} c_1.
\]
Finally consider degree $2$. Then we can use the isomorphism $\Lambda_q^1(\lieum) \cong \Lambda_q^2(\lieum)$ of $U_q(\mathfrak{l}_{ss})$-modules given in \cref{cor:iso-exterior} to obtain the result from the degree $1$ case.
\end{proof}

Next we will determine explicit formulae for the action of the elements $\gamma(y_i)$ and their adjoints on $\Lq$. These results follow from tedious but straightforward computations, hence we will skip most of the details for the sake of brevity.
Also recall that $\gamma$ stands for the right regular representation, that is $\gamma(y) y^\prime = y^\prime \wedge y$.

\begin{lemma}
\label{lem:action-gamma}
The action of the $\gamma(y_i)$ on $\Lq$ is given by
\begin{center}
{
\renewcommand{\arraystretch}{1.3}
\begin{tabular}{c | c | c | c | c | c | c | c | c}
& $1$ & $y_1$ & $y_2$ & $y_3$ & $y_{2 1}$ & $y_{3 1}$ & $y_{3 2}$ & $y_{3 2 1}$ \\
\hline
$\gamma(y_1)$ & $y_1$ & $0$ & $y_{2 1}$ & $y_{3 1}$ & $0$ & $0$ & $y_{3 2 1}$ & $0$ \\
\hline
$\gamma(y_2)$ & $y_2$ & $-q^2 y_{2 1}$ & $-\qdiff [2]_q^{-1} q y_{3 1}$ & $y_{3 2}$ & $0$ & $-q^2 y_{3 2 1}$ & $0$ & $0$ \\
\hline
$\gamma(y_3)$ & $y_3$ & $-y_{3 1}$ & $-q^2 y_{3 2}$ & $0$ & $q^2 y_{3 2 1}$ & $0$ & $0$ & $0$
\end{tabular}
}
\end{center}
\end{lemma}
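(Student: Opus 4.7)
The plan is to verify the table by direct computation, applying the quadratic relations of $\Lq$ derived earlier in the section, namely
\[
y_1 \wedge y_1 = y_3 \wedge y_3 = 0, \quad y_2 \wedge y_2 = \qdiff \tfrac{q}{[2]_q} y_1 \wedge y_3,
\]
together with $y_1 \wedge y_2 = -q^2 y_2 \wedge y_1$, $y_1 \wedge y_3 = -y_3 \wedge y_1$, $y_2 \wedge y_3 = -q^2 y_3 \wedge y_2$. Since $\gamma(y_i)(w) = w \wedge y_i$, the lemma reduces to computing, for each of the eight basis vectors $w \in \{1, y_1, y_2, y_3, y_{21}, y_{31}, y_{32}, y_{321}\}$ and each $i \in \{1, 2, 3\}$, the product $w \wedge y_i$ and re-expressing it in the chosen basis.

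First I would dispose of the trivial entries. The action on $1$ is tautological; the top-degree column $y_{321}$ and all entries whose result would exceed degree $3$ vanish automatically. Next, several cells are zero directly from nilpotency: $\gamma(y_1)(y_1)$, $\gamma(y_1)(y_{21})$, $\gamma(y_1)(y_{31})$, $\gamma(y_3)(y_3)$, $\gamma(y_3)(y_{31})$, $\gamma(y_3)(y_{32})$ all terminate in a factor $y_1 \wedge y_1$ or $y_3 \wedge y_3$. Then the degree-$1$ column is read off immediately from the quadratic relations, producing the signs $-q^2, -1, -q^2$ and (for $\gamma(y_2)(y_2)$) the anomalous entry $-\qdiff [2]_q^{-1} q \, y_{31}$ coming from $y_2 \wedge y_2 = \qdiff \tfrac{q}{[2]_q} y_1 \wedge y_3$ after rewriting $y_1 \wedge y_3 = -y_{31}$.

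For the degree-$2$ column I would proceed entry by entry, each time reducing $w \wedge y_i$ to the form $\pm q^{?} y_{321}$ by moving the new $y_i$ past the existing factors using the $q$-commutation rules, with careful bookkeeping of the $q$-powers. For instance $\gamma(y_3)(y_{21}) = y_2 \wedge y_1 \wedge y_3 = -y_2 \wedge y_3 \wedge y_1 = q^2 y_3 \wedge y_2 \wedge y_1 = q^2 y_{321}$, and similarly $\gamma(y_2)(y_{31}) = -q^2 y_{321}$, $\gamma(y_1)(y_{32}) = y_{321}$.

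The only slightly subtle checks are the two cells $\gamma(y_2)(y_{21})$ and $\gamma(y_2)(y_{32})$, where applying $y_2 \wedge y_2 = \qdiff \tfrac{q}{[2]_q} y_1 \wedge y_3$ produces a non-obvious intermediate term that then must vanish. I expect these to be the main bookkeeping obstacle. Concretely, $\gamma(y_2)(y_{21}) = y_2 \wedge y_1 \wedge y_2 = -q^{-2} y_1 \wedge y_2 \wedge y_2 = -q^{-2} \qdiff \tfrac{q}{[2]_q} y_1 \wedge y_1 \wedge y_3 = 0$, and analogously $\gamma(y_2)(y_{32}) = y_3 \wedge y_2 \wedge y_2 = \qdiff \tfrac{q}{[2]_q} y_3 \wedge y_1 \wedge y_3 = 0$, where in each case the $y_2^2$ contribution is killed by $y_1^2 = 0$ or $y_3^2 = 0$. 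Once these two calculations are done, all remaining entries are routine and the table is complete.
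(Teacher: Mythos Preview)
Your proposal is correct and is exactly the approach the paper takes: the paper's own proof reads in its entirety ``Follows from straightforward computations,'' and your write-up is precisely those computations spelled out, using the quadratic relations of $\Lq$ from \cref{sec:algebras}. The two potentially delicate cells $\gamma(y_2)(y_{21})$ and $\gamma(y_2)(y_{32})$ are handled correctly via $y_2 \wedge y_2 = \qdiff \tfrac{q}{[2]_q}\, y_1 \wedge y_3$ followed by $y_1 \wedge y_1 = 0$ (resp.\ $y_3 \wedge y_3 = 0$).
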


\begin{proof}
Follows from straightforward computations.
\end{proof}

Finally we consider the action of the adjoints $\gamma(y_i)^*$. This action only depends on the ratio of the coefficients $\{c_k\}_{k = 0}^3$, hence we introduce the notation $\kappa_k := c_k / c_{k - 1}$ for $k = 1, 2, 3$.

\begin{lemma}
The action of the $\gamma(y_i)^*$ on $\Lq$ is given by
\begin{center}
{
\renewcommand{\arraystretch}{1.3}
\begin{tabular}{c | c | c | c | c | c | c | c | c}
& $1$ & $y_1$ & $y_2$ & $y_3$ & $y_{2 1}$ & $y_{3 1}$ & $y_{3 2}$ & $y_{3 2 1}$ \\
\hline
$\gamma(y_1)^*$ & $0$ & $\kappa_1 1$ & $0$ & $0$ & $\kappa_2 [2]_q y_2$ & $\kappa_2 [2]_q q^2 y_3$ & $0$ & $\kappa_3 q^2 y_{3 2}$ \\
\hline
$\gamma(y_2)^*$ & $0$ & $0$ & $\kappa_1 [2]_q^{-1} 1$ & $0$ & $-\kappa_2 q^2 y_1$ & $-\kappa_2 \qdiff [2]_q q y_2$ & $\kappa_2 y_3$ & $-\kappa_3 [2]_q^{-1} q^2 y_{3 1}$ \\
\hline
$\gamma(y_3)^*$ & $0$ & $0$ & $0$ & $\kappa_1 q^{-2} 1$ & $0$ & $- \kappa_2 [2]_q y_1$ & $-\kappa_2 [2]_q y_2$ & $\kappa_3 q^2 y_{2 1}$
\end{tabular}
}
\end{center}
\end{lemma}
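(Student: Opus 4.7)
The plan is to prove each entry of the table via the defining relation
\[
(\gamma(y_i)^* v, w) = (v, \gamma(y_i) w), \quad v \in \Lambda_q^k(\lieum), \ w \in \Lambda_q^{k-1}(\lieum),
\]
combined with the inner products from \cref{lem:inner-prod} and the action of $\gamma(y_i)$ from \cref{lem:action-gamma}. Since $\gamma(y_i)$ raises degree by one, $\gamma(y_i)^*$ lowers it, and since the inner product is $\Uql$-invariant, components of different weights are orthogonal. Combining these two restrictions, for each $v$ there is at most one basis vector $w$ in the next lower degree of matching weight, and the coefficient of $w$ in $\gamma(y_i)^* v$ is then extracted as the ratio $(v, \gamma(y_i) w) / (w, w)$. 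The notation $\kappa_k := c_k / c_{k-1}$ absorbs the inner-product normalizations and makes the entries depend only on these three parameters.

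With this reduction the argument becomes a finite bookkeeping exercise: run through the eight basis vectors of $\Lq$ and the three operators $\gamma(y_i)^*$, identify the unique candidate $w$ in each case by weight matching, and plug the values from the two preceding lemmas into the ratio. The vanishing entries correspond to the cases where no basis vector $w$ of appropriate weight exists, or where $\gamma(y_i) w = 0$ directly.

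The main obstacle is the computation of $\gamma(y_2)^* y_{3 1}$, which is the only entry containing a factor of $\qdiff$. The subtlety here comes from the non-standard exterior relation $y_2 \wedge y_2 = \qdiff \frac{q}{[2]_q} y_1 \wedge y_3$ in $\Lq$, which forces $\gamma(y_2) y_2 = -\qdiff [2]_q^{-1} q \, y_{3 1}$ to be nonzero. Consequently $(y_{3 1}, \gamma(y_2) y_2) = -\qdiff q \, c_2$, and dividing by $(y_2, y_2) = c_1 / [2]_q$ produces the expected coefficient $-\kappa_2 \qdiff [2]_q q$ in front of $y_2$. All other entries follow from naive ratios without surprises, so once this one delicate case has been handled carefully, the remainder of the proof is routine.
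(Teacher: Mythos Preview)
Your approach is correct and matches the paper's own proof, which simply invokes the adjoint relation $(\gamma(y_i) y, y^\prime) = (y, \gamma(y_i)^* y^\prime)$ together with \cref{lem:action-gamma} and leaves the computations to the reader. Your write-up is in fact more detailed than the paper's, and your handling of the delicate entry $\gamma(y_2)^* y_{31}$ is accurate.
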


\begin{proof}
Let $y \in \Lambda_q^{k - 1}(\lieum)$ and $y^\prime \in \Lambda_q^k(\lieum)$. Then we must have $(\gamma(y_i) y, y^\prime)_k = (y, \gamma(y_i)^* y^\prime)_{k - 1}$. Then the formulae follow from explicit computations using \cref{lem:action-gamma}.
\end{proof}

\section{The Parthasarathy formula}
\label{sec:parthasarathy}

In this section we will compare the Dolbeault--Dirac element $D^2$ with the Casimir $\Cas \otimes 1$. What we are after is a relation of the form $D^2 \sim \Cas \otimes 1$, up to a constant. In the first part we show that essentially fixes the inner product on $\Lq$. In the second part we show that we get such a relation, hence a quantum version of the Parthasarathy formula.

\subsection{Vanishing of terms}

In the next lemma, we observe that having a relation of the form $D^2 \sim \Cas \otimes 1$ requires the vanishing of certain terms in the expression for $D^2$.

\begin{lemma}
A necessary condition to have $D^2 \sim \Cas \otimes 1$ (up to a constant) is that
\[
\gamma(y_1)^* \gamma(y_3) + \gamma(y_3) \gamma(y_1)^* = 0.
\]
This condition is satisfied if and only if
\[
\kappa_2 = \kappa_1 [2]_q^{-1} q^{-2}, \quad \kappa_3 = \kappa_1 q^{-4}.
\]
\end{lemma}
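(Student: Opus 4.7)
The plan is to compare the two expressions for $D^2$ and $\Cas \otimes 1$ derived in \cref{prop:D-squared} and \cref{prop:casimir-clifford} respectively, focussing on the coefficient of the monomial $E_{\xi_1}^* E_{\xi_3}$. The point is that this particular monomial does \emph{not} appear in the normal-form expression of $\Cas \otimes 1$ (and neither does its Hermitian conjugate $E_{\xi_3}^* E_{\xi_1}$), whereas in $D^2$ it appears precisely with coefficient $\Gamma_{13} = \gamma(y_1)^* \gamma(y_3) + \gamma(y_3) \gamma(y_1)^*$. Since in \cref{prop:casimir-clifford} all quantised Levi factors have already been absorbed into the second tensor slot via the defining relation \eqref{eq:relation-cliff} of $\algspace$, and since the monomials $E_{\xi_i}^* E_{\xi_j}$ are linearly independent in $\Uqg$ modulo right-multiplication by $\Uql$ (by the triangular decomposition of $\Uqg$), term-by-term comparison in $\algspace$ is legitimate. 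The identity $D^2 \sim c \cdot (\Cas \otimes 1)$ therefore forces $\Gamma_{13} = 0$ regardless of the scalar $c$, giving the stated necessary condition.

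To then characterise when $\Gamma_{13}$ vanishes, I would evaluate it on the eight-element basis of $\Lq$ fixed at the start of \cref{sec:more-exterior}, using the tables for $\gamma(y_i)$ and for $\gamma(y_i)^*$. Since $\Gamma_{13}$ preserves degree and has a well-defined total weight, most basis vectors produce zero automatically; a short inspection shows that only the inputs $y_1$ and $y_{21}$ can yield a non-zero output, and each produces a scalar multiple of a single basis vector ($y_3$ and $y_{32}$ respectively). Extracting the two resulting scalars gives two linear equations of the form $\kappa_1 - \kappa_2 [2]_q q^2 = 0$ and $\kappa_3 q^4 - \kappa_2 [2]_q q^2 = 0$, which immediately rearrange into the stated formulae for $\kappa_2$ and $\kappa_3$ in terms of $\kappa_1$.

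The computation is essentially mechanical, so I anticipate no serious obstacle. The one step that requires mild care is the justification of the term-by-term comparison in the first paragraph, which is why it is crucial to work with the normal form of $\Cas \otimes 1$ given in \cref{prop:casimir-clifford} rather than with the raw expression from \cref{prop:cas-to-the-right}; once this is granted, the rest amounts to reading off coefficients from the tables of \cref{sec:more-exterior}.
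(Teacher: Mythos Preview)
Your proposal is correct and follows essentially the same approach as the paper's proof: both argue that the absence of $E_{\xi_1}^* E_{\xi_3}$ in the normal form of $\Cas \otimes 1$ (\cref{prop:casimir-clifford}), together with the linear independence of the monomials $E_{\xi_i}^* E_{\xi_j}$, forces $\Gamma_{13} = 0$, and then both extract the relations on the $\kappa_k$ by evaluating $\Gamma_{13}$ on the basis elements $y_1$ and $y_{21}$. Your weight/degree argument to pre-filter which basis vectors can contribute is a mild streamlining over the paper, which instead checks degrees $0$ and $3$ directly and then remarks that the remaining basis elements are handled by ``similar computations''; and your explicit mention that the comparison must be done in $\algspace$ (i.e.\ modulo right $\Uql$-multiplication) is slightly more precise than the paper's one-line justification, but neither difference is substantive.
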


\begin{proof}
Consider the expression for $\Cas \otimes 1$ given in \cref{prop:casimir-clifford}: observe that the term $E_{\xi_1}^* E_{\xi_3}$ does not appear and that the elements $\{ E_{\xi_i}^* E_{\xi_j} \}_{i, j}$ are linearly independent. On the other hand consider the expression for $D^2$ given in \cref{prop:D-squared}: we see that the term with $E_{\xi_1}^* E_{\xi_3}$ in the first leg vanishes if and only if $\Gamma_{1 3} = 0$, that is $\gamma(y_1)^* \gamma(y_3) + \gamma(y_3) \gamma(y_1)^* = 0$.

Therefore we have to check when this identity is satisfied. This is the case when acting on elements of degree $0$ and $3$ in $\Lq$. On the other hand in degree $1$ we have
\[
\gamma(y_1)^* \gamma(y_3) y_1 + \gamma(y_3) \gamma(y_1)^* y_1 = -\kappa_2 [2]_q q^2 y_3 + \kappa_1 y_3.
\]
We see that we must have $\kappa_2 = \kappa_1 [2]_q^{-1} q^{-2}$. Similarly in degree $2$ we have
\[
\gamma(y_1)^* \gamma(y_3) y_{2 1} + \gamma(y_3) \gamma(y_1)^* y_{2 1} = \kappa_3 q^4 y_{3 2} - \kappa_2 [2]_q q^2 y_{3 2}.
\]
Hence we get $\kappa_3 = \kappa_2 [2]_q q^{-2} = \kappa_1 q^{-4}$. Similar computations show that the condition is also satisfied when acting on the remaining basis elements.
\end{proof}

This condition fixes two of the three free parameters $\{\kappa_k\}_{k = 1}^3$, where we recall that $\kappa_k = c_k / c_{k - 1}$.  The remaining parameter $\kappa_1$ is essentially irrelevant in what follows, as it can be absorbed as a prefactor either in the Dolbeault-Dirac element $D$ or in the Casimir $\Cas$.

To continue with the comparison of $D^2$ and $\Cas \otimes 1$, we need to simplify all the elements $\Gamma_{i j}$ defined in \cref{prop:D-squared}.
We start by considering the case $i < j$.

\begin{lemma}
\label{lem:clifford-off}
We have the relations
\[
\Gamma_{1 2} = - \kappa_1 \qdiff \frac{q}{[2]_q} K_{2 \lambda_1} S(E_1), \quad
\Gamma_{1 3} = 0, \quad
\Gamma_{2 3} = - \kappa_1 Q \frac{q^{-1}}{[2]_q} K_{2 \lambda_1} S(E_1).
\]
\end{lemma}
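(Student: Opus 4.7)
The plan is to verify each identity by evaluating both sides as elements of $\mathrm{End}(\Lq)$ on the explicit basis $\{1, y_1, y_2, y_3, y_{21}, y_{31}, y_{32}, y_{321}\}$. All three right-hand sides are scalar multiples of a single element, so I would begin by tabulating the action of $K_{2\lambda_1} S(E_1) = -K_{2\lambda_1 - \alpha_1} E_1$ on $\Lq$, using \cref{lem:levi-Lq} for $E_1$ together with the weights of the basis vectors to handle the Cartan factor. Note that this operator raises weight by $\alpha_1$, so by weight considerations only a handful of basis vectors give nonzero outputs.

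The identity $\Gamma_{13} = 0$ was already verified in the preceding lemma under the two constraints $\kappa_2 = \kappa_1 [2]_q^{-1} q^{-2}$ and $\kappa_3 = \kappa_1 q^{-4}$, which I would assume in force throughout. For $\Gamma_{12} = \gamma(y_1)^* \gamma(y_2) + q^{-2} \gamma(y_2) \gamma(y_1)^*$, I would substitute the tabulated actions of $\gamma(y_j)$ and $\gamma(y_j)^*$ from Section \ref{sec:more-exterior} into each basis vector and compare against $-\kappa_1 \qdiff \frac{q}{[2]_q} K_{2\lambda_1} S(E_1)$ applied to the same vector. Since both operators raise weight by $\alpha_1$, the check reduces to those basis elements on which $E_1$ acts non-trivially; the resulting identities are routine once the powers of $q$ are tracked. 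The case of $\Gamma_{23}$ is entirely parallel, with the extra subtlety that the correction $-\qdiff [2]_q q^{-2} \gamma(y_2) \gamma(y_1)^*$ must be included; this term is exactly what is needed to cancel the spurious contributions produced by the non-trivial exterior relation $y_2 \wedge y_2 = Q \frac{q}{[2]_q} y_1 \wedge y_3$ when $\gamma(y_3) \gamma(y_2)^*$ acts on vectors containing $y_1$.

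The main obstacle is thus bookkeeping rather than strategy: with eight basis vectors and multiple compositions to control, keeping the signs and the factors of $q$ and $[2]_q$ consistent is the primary source of potential error, and in the $\Gamma_{23}$ case one must be especially careful that the deformed quadratic relation produces the correct net coefficient $-\kappa_1 Q \frac{q^{-1}}{[2]_q}$. A possible conceptual shortcut would be to exploit the $\Uql$-equivariance of the whole set-up: both sides transform identically under $\Uql$, so the identity could in principle be reduced to a single nonzero matrix entry and extended by the $\Uql$-action. However, given the small rank and the already-tabulated data, the direct verification on the basis is the most transparent route.
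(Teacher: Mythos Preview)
Your proposal is correct and is precisely the approach the paper takes: the paper's proof reads simply ``Follows from tedious but straightforward computations,'' meaning exactly the basis-by-basis verification you describe using the tables of \cref{sec:more-exterior} and the constraints on the $\kappa_k$ fixed in the preceding lemma. Your weight observation that both sides raise by $\alpha_1$, and your remark about the $\Uql$-equivariance shortcut, are nice organizing principles that the paper does not make explicit but which are entirely compatible with its intended computation.
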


\begin{proof}
Follows from tedious but straightforward computations.
\end{proof}

Finally we consider the case $i = j$, which is a bit more involved.

\begin{lemma}
\label{lem:clifford-diag}
We have the relations
\[
\begin{split}
\Gamma_{1 1} & = \kappa_1 K_{2 \lambda_1}, \\
\Gamma_{2 2} & = \frac{\kappa_1}{[2]_q^2} \left( q^{-3} K_{2 \lambda_1} + q^3 K_{2 \lambda_2} - \qdiff^2 [2]_q K_{2 \lambda_1} + \qdiff^2 K_{2 \lambda_1} S(E_1^* E_1) \right), \\
\Gamma_{3 3} & = \kappa_1 \left( K_{2 \lambda_2} - \qdiff q^{-1} K_{2 \lambda_1} + \qdiff^2 q^{-3} K_{2 \lambda_1} S(E_1^* E_1) \right).
\end{split}
\]
\end{lemma}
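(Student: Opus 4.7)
The plan is to verify the three identities by direct computation on the basis $\{1, y_1, y_2, y_3, y_{21}, y_{31}, y_{32}, y_{321}\}$ of $\Lq$. Both sides preserve the weight grading: each summand in $\Gamma_{ii}$ has the form $\gamma(y_j)^\sharp \gamma(y_k)^\sharp$ with cancelling weights, while $K_{2\lambda_1}$, $K_{2\lambda_2}$, and $S(E_1^* E_1)$ are all weight-preserving (the last because $S$ reverses weights and $E_1^* E_1$ preserves them). Since the eight weight spaces of $\Lq$ are each one-dimensional, every identity reduces to a collection of eight scalar equalities.

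First I would expand each $\Gamma_{ii}$ according to \cref{prop:D-squared}, and evaluate it on each basis vector using the action tables for $\gamma(y_j)$ and $\gamma(y_j)^*$ derived in \cref{sec:more-exterior}, together with the substitutions $\kappa_2 = \kappa_1 [2]_q^{-1} q^{-2}$ and $\kappa_3 = \kappa_1 q^{-4}$ forced by the previous lemma. This returns, for each of the eight inputs, a scalar multiple of $\kappa_1$ times the basis vector itself.

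For the right-hand side I would use $\lambda_1 = \alpha_1 + \tfrac12 \alpha_2$ and $\lambda_2 = \tfrac12 \alpha_2$ to rewrite $K_{2\lambda_1} = K_1^2 K_2$ and $K_{2\lambda_2} = K_2$, whose action on the basis is immediate from \cref{lem:levi-Lq}. For the element $S(E_1^* E_1)$ appearing in $\Gamma_{22}$ and $\Gamma_{33}$, I would use that $S$ is an anti-homomorphism together with $S(E_1) = -K_1^{-1} E_1$ and $S(E_1^*) = S(F_1 K_1) = -q^2 F_1$ to obtain $S(E_1^* E_1) = q^2 K_1^{-1} E_1 F_1$; its action on the basis then follows again from \cref{lem:levi-Lq}.

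The case of $\Gamma_{11}$ is the quickest, since its right-hand side reduces to $\kappa_1$ times the eigenvalue of $K_{2\lambda_1}$ on each basis vector. The main obstacle is the purely algebraic bookkeeping in $\Gamma_{22}$ and $\Gamma_{33}$: each of these splits into several terms whose coefficients involve $q$, $Q = q - q^{-1}$ and $[2]_q$, and one must check on each of the eight basis vectors that the sum matches the predicted eigenvalue. No conceptual difficulty appears beyond elementary manipulation of $q$-numbers, but the computation is unavoidably tedious and error-prone; a systematic organization by basis vector (and noting that $E_1 F_1$ and $F_1 E_1$ act nontrivially only on specific weight subspaces) keeps it tractable.
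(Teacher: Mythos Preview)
Your proposal is correct and matches the paper's own approach: the paper's proof simply reads ``Also follows from tedious computations, where we use $S(E_1^* E_1) = q^2 K_1^{-1} E_1 F_1$,'' which is precisely the verification-on-the-basis strategy you outline, including the same rewriting of $S(E_1^* E_1)$. There is nothing to add.
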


\begin{proof}
Also follows from tedious computations, where we use $S(E_1^* E_1) = q^2 K_1^{-1} E_1 F_1$.
\end{proof}

\subsection{Comparison}

At this point we have all the necessary ingredients to compare the square of the Dolbeault--Dirac element $D^2$ with the Casimir $\Cas \otimes 1$.

\begin{theorem}
\label{thm:D-parthasarathy}
The element $D^2$ coincides with $\Cas \otimes 1$ as operators on the space $\spinbun$, up to an overall constant and terms in the quantized Levi factor $\Uql$.
\end{theorem}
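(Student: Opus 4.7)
The plan is to compare the two explicit expressions already at our disposal, term by term in the basis $\{E_{\xi_i}^* E_{\xi_j}\}_{i,j=1}^3$ of the first leg. By \cref{prop:D-squared} we have
\[
D^2 \sim \sum_{i,j=1}^3 E_{\xi_i}^* E_{\xi_j} \otimes \Gamma_{ij},
\]
and by \cref{prop:casimir-clifford} we have $\Cas \otimes 1$ expressed as a similar sum, where the coefficient of $E_{\xi_i}^* E_{\xi_j}$ in the first leg lies in $\Uql$ (embedded into $\mathrm{End}(\Lq)$ via the right action $T \mapsto T \circ S(\cdot)$). The strategy is simply to check that, with the inner product coefficients $\kappa_2, \kappa_3$ fixed as in the preceding subsection, each $\Gamma_{ij}$ agrees with the corresponding coefficient from $\Cas \otimes 1$, up to a single overall constant (which will be $\kappa_1$, the remaining free parameter).

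First I would note that since the elements $\{E_{\xi_i}^* E_{\xi_j}\}$ are linearly independent in $\Uqg$ modulo $\Uql$, it suffices to compare the Clifford-algebra coefficients individually. The necessary vanishing $\Gamma_{13} = 0$ matches the absence of an $E_{\xi_1}^* E_{\xi_3}$ term in \cref{prop:casimir-clifford}; this was already arranged by choosing $\kappa_2 = \kappa_1 [2]_q^{-1} q^{-2}$ and $\kappa_3 = \kappa_1 q^{-4}$. Next, the off-diagonal terms $\Gamma_{12}$ and $\Gamma_{23}$ from \cref{lem:clifford-off} must be matched against the corresponding coefficients $-Q[2]_q q^{-3} K_{2\lambda_1} S(E_1)$ and $-Q[2]_q q^{-5} K_{2\lambda_1} S(E_1)$ from \cref{prop:casimir-clifford}; both identifications are straightforward and fix a common proportionality constant, namely $\kappa_1 Q q / [2]_q$ divided by $Q[2]_q q^{-3}$, which simplifies to a single scalar. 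The conjugate entries $\Gamma_{21}, \Gamma_{32}$ follow by taking adjoints (using $\Gamma_{ji} = \Gamma_{ij}^*$), consistent with the fact that the corresponding terms in $\Cas \otimes 1$ appear with $(K_{2\lambda_1} S(E_1))^*$.

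The diagonal terms, given in \cref{lem:clifford-diag}, require a bit more care. After factoring out $\kappa_1$, we need to check
\[
\kappa_1 K_{2\lambda_1} \quad \text{versus} \quad [2]_q^2 q^{-4} K_{2\lambda_1},
\]
and analogous identities for $\Gamma_{22}$ and $\Gamma_{33}$. Matching the $\Gamma_{11}$ term against the coefficient of $E_{\xi_1}^* E_{\xi_1}$ in \cref{prop:casimir-clifford} fixes the global scalar, and then one verifies by direct inspection that the expressions for $\Gamma_{22}$ and $\Gamma_{33}$ coincide with the coefficients of $E_{\xi_2}^* E_{\xi_2}$ and $E_{\xi_3}^* E_{\xi_3}$ respectively, up to this same scalar; the term $S(E_1^* E_1)$ common to both sides aligns precisely, and the purely Cartan contributions involving $K_{2\lambda_1}$ and $K_{2\lambda_2}$ match after clearing $[2]_q$ factors.

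I do not expect a serious obstacle: all the hard work has been done in obtaining \cref{prop:D-squared}, \cref{prop:casimir-clifford}, and the simplifications \cref{lem:clifford-off}, \cref{lem:clifford-diag}. The only possible difficulty is bookkeeping — ensuring the single choice of overall constant fixed by matching $\Gamma_{11}$ is compatible with all the other five (up to adjoints) matchings simultaneously. This is the content of the theorem, and its verification amounts to a finite set of scalar identities in $q$ and $[2]_q$.
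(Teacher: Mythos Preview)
Your proposal is correct and follows essentially the same route as the paper: plug the simplified $\Gamma_{ij}$ from \cref{lem:clifford-off} and \cref{lem:clifford-diag} into the expression of \cref{prop:D-squared}, and then match term by term against \cref{prop:casimir-clifford} to extract the single overall scalar $\kappa_1 q^4/[2]_q^2$. The paper presents this as one consolidated display rather than a sequence of pairwise checks, but the content is identical.
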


\begin{proof}
We plug the relations from \cref{lem:clifford-off} and \cref{lem:clifford-diag} into the expression for $D^2$ given in \cref{prop:D-squared}.
Using these we obtain
\[
\begin{split}
D^2 & \sim E_{\xi_1}^* E_{\xi_1} \otimes \kappa_1 K_{2 \lambda_1} \\
& + E_{\xi_2}^* E_{\xi_2} \otimes \frac{\kappa_1}{[2]_q^2} \left( q^{-3} K_{2 \lambda_1} + q^3 K_{2 \lambda_2} - \qdiff^2 [2]_q K_{2 \lambda_1} + \qdiff^2 K_{2 \lambda_1} S(E_1^* E_1) \right) \\
& + E_{\xi_3}^* E_{\xi_3} \otimes \kappa_1 \left( K_{2 \lambda_2} - \qdiff q^{-1} K_{2 \lambda_1} + \qdiff^2 q^{-3} K_{2 \lambda_1} S(E_1^* E_1) \right) \\
& - E_{\xi_1}^* E_{\xi_2} \otimes \kappa_1 \qdiff [2]_q^{-1} q K_{2 \lambda_1} S(E_1) - E_{\xi_2}^* E_{\xi_3} \otimes \kappa_1 Q [2]_q^{-1} q^{-1} K_{2 \lambda_1} S(E_1) \\
& - E_{\xi_2}^* E_{\xi_1} \otimes \kappa_1 \qdiff [2]_q^{-1} q (K_{2 \lambda_1} S(E_1))^* - E_{\xi_3}^* E_{\xi_2} \otimes \kappa_1 Q [2]_q^{-1} q^{-1} (K_{2 \lambda_1} S(E_1))^*.
\end{split}
\]
Then comparing with $\Cas \otimes 1$ from \cref{prop:casimir-clifford} we see that $D^2 \sim \kappa_1 \frac{q^4}{[2]_q^2} \Cas \otimes 1$.
\end{proof}

Hence, either fixing the constant $\kappa_1$ or redefining $\Cas$, we can write
\[
D^2 \sim \Cas \otimes 1,
\]
which is the quantum analogue of the Parthasarathy formula we were looking for.

\section{Application to spectral triples}
\label{sec:spectral-triple}

In this section we will show that we get a spectral triple for the Lagrangian Grassmannian $\LagGr$.
As mentioned in the introduction, most of the steps necessary to build spectral triples on quantum flag manifolds are already given in the paper \cite{dirac-flag}. For this reason we will be very brief and concentrate on the property of compact resolvent for $D$.

We consider the Hilbert space completion of $\spinbun$, denoted by the same symbol, where the inner product is the Haar state on $\Cqg$ and the inner product $(\cdot, \cdot)$ we introduced for $\Lq$. The algebra $\LagGr$ acts by left multiplication.
The Dolbeault--Dirac operator $D = \dolb + \dolb^*$ is clearly symmetric and is easily seen to be essentially self-adjoint.

The condition of bounded commutators was proven in general in \cite{dirac-flag}. It can be proven in the same fashion also within our setting. Hence the only thing left to prove, in order to get a spectral triple for $\LagGr$, is that $D$ has compact resolvent.

\begin{theorem}
\label{thm:spectral-triple}
The Dolbeault--Dirac operator $D$ has compact resolvent. Hence we get a spectral triple for the quantum Lagrangian Grassmannian $\LagGr$.
\end{theorem}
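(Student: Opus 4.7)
The plan is to deduce compact resolvent of $D$ from \cref{thm:D-parthasarathy} by combining a Peter--Weyl decomposition of $\spinbun$ with a bounded self-adjoint perturbation argument. Recall that $\Cqg \cong \bigoplus_\Lambda V(\Lambda) \otimes V(\Lambda)^*$ as a $\Uqg$-bimodule, the left regular action putting $\Uqg$ on the $V(\Lambda)$ factor. Taking $\Uql$-invariants (for the action used to define $\spinbun$, which affects only $V(\Lambda)$ and $\Lq$) yields
\[
\spinbun \;\cong\; \bigoplus_\Lambda \bigl( V(\Lambda) \otimes \Lq \bigr)^{\Uql} \otimes V(\Lambda)^*,
\]
with each summand finite dimensional since $V(\Lambda)$ and $\Lq$ are. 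As $\Cas$ is central, \cref{cor:value-casimir} shows that $\Cas \otimes 1$ is diagonal for this decomposition, acting by the scalar $c_\Lambda$ on the $\Lambda$-summand.

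Next, I would exploit \cref{thm:D-parthasarathy} to write $D^2 - \Cas \otimes 1 = \sum_k Z_k \otimes T_k$ with $Z_k \in \Uql$ and $T_k \in \mathrm{End}(\Lq)$. The defining property $(Z \otimes 1)\xi = (1 \otimes S(Z))\xi$ of $\spinbun$, valid for all $Z \in \Uql$, converts each term individually: multiplying this identity on the left by $1 \otimes T_k$ gives $(Z_k \otimes T_k)\xi = (1 \otimes T_k \circ S(Z_k))\xi$. Summing, one obtains the operator identity $D^2 = \Cas \otimes 1 + 1 \otimes B$ on $\spinbun$, where $B := \sum_k T_k \circ S(Z_k) \in \mathrm{End}(\Lq)$ is a fixed operator on the finite-dimensional space $\Lq$, hence automatically bounded. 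Therefore $D^2$ is nothing but a bounded self-adjoint perturbation of a diagonal operator.

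It remains to verify that $c_\Lambda \to \infty$ as $\|\Lambda\|\to \infty$. Using the formula from \cref{cor:value-casimir}, $c_\Lambda$ is a sum of four strictly positive terms of the form $q^{-2(\lambda_j, \Lambda + \rho)}/(q - q^{-1})^2$. Since $0 < q < 1$ and the pairing $(\omega_1, \Lambda + \rho)$ is a strictly positive linear function of the dominant weight $\Lambda$, the single summand with $\lambda_j = \omega_1$ already diverges as $\Lambda \to \infty$ in any direction. Hence $\Cas \otimes 1$ has pure point spectrum $\{c_\Lambda\}$ on $\spinbun$ with finite-dimensional eigenspaces diverging to $+\infty$, so it has compact resolvent; a standard argument shows that adding the bounded self-adjoint operator $1 \otimes B$ preserves this property. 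Thus $D^2$, and consequently $D$, has compact resolvent, which together with essential self-adjointness and the bounded commutators property from \cite{dirac-flag} yields the desired spectral triple.

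The main obstacle, in my view, is the second step. The identity $D^2 \sim \Cas \otimes 1$ involves genuinely unbounded Cartan and Chevalley factors from $\Uql$, and on the ambient space $\Cqg \otimes \Lq$ the corresponding perturbation is catastrophic. The whole point of working on $\spinbun$ rather than $\Cqg \otimes \Lq$ — and indeed of introducing $\algspace$ in \cref{sec:rewriting} — is that the $\Uql$-invariance condition transforms these unbounded correction terms into a single bounded operator on the finite-dimensional Clifford module, reducing the compact resolvent question to the essentially routine spectral growth of $c_\Lambda$.
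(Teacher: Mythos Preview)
Your proposal is correct and follows essentially the same route as the paper: reduce to $\Cas \otimes 1$ via the Parthasarathy formula, argue that the Levi correction acts boundedly on $\spinbun$, and then use the Peter--Weyl decomposition together with the growth of $c_\Lambda$ to conclude compact resolvent. Your treatment of the bounded-perturbation step is in fact more explicit than the paper's---you spell out how the invariance condition \eqref{eq:spin-bundle} converts each $Z_k \otimes T_k$ with $Z_k \in \Uql$ into $1 \otimes T_k S(Z_k)$, whereas the paper simply asserts that elements of $\Uql \otimes \mathrm{End}(\Lq)$ act boundedly on $\spinbun$---and your phrasing of the divergence as $c_\Lambda \to \infty$ when $\|\Lambda\| \to \infty$ is cleaner than the paper's use of a lexicographic enumeration.
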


\begin{proof}
As $D$ is self-adjoint, it is equivalent to prove that $D^* D = D^2$ has compact resolvent. From \cref{thm:D-parthasarathy} we have $D^2 \sim \Cas \otimes 1$ (possibly up to a constant). The elements of $\Uql \otimes \mathrm{End}(\Lq)$ act as bounded operators on $\spinbun$, that is $D^2$ differs from $\Cas \otimes 1$ by a bounded perturbation. Hence it suffices to prove that the latter operator has compact resolvent.

We need to recall some properties of the Hilbert space $\spinbun$.
From the Peter-Weyl decomposition $\mathbb{C}_q[G] = \bigoplus_{\Lambda \in P^+} \mathbb{C}_q[G]_\Lambda$, it follows that we have a decomposition $\spinbun = \bigoplus_{\Lambda \in P^+} \spinbun_\Lambda$.
Next, it follows from the properties of the Haar state that $(\spinbun_\Lambda, \spinbun_{\Lambda^\prime}) = 0$ for $\Lambda \neq \Lambda^\prime$. Hence we obtain an orthonormal basis of $\spinbun$ by choosing an orthonormal basis for each component $\spinbun_\Lambda$. Finally we have $(\Cas \otimes 1) a = c_\Lambda a$ for any $a \in \spinbun_\Lambda$.

Recall the following criterion to show compactness: $T \in B(H)$ is compact if we can find an orthonormal basis $\{ e_n \}_{n \in \mathbb{N}}$ such that $T e_n = \lambda_n e_n$ and $\lim_{n \to \infty} \lambda_n = 0$.
To obtain an indexing by $\mathbb{N}$ we order the weights $\Lambda = n_1 \omega_1 + n_2 \omega_2$ lexicographically: we have $(m_1, m_2) < (n_1, n_2)$ if either $m_1 < n_1$ or $m_1 = n_1$ and $m_2 < n_2$. Then, after picking any order for the orthonormal basis of each subspace $\spinbun_\Lambda$, we obtain an orthonormal basis $\{ e_n \}_{n \in \mathbb{N}}$ of $\spinbun$. Since $(\Cas \otimes 1) e_n = c_\Lambda e_n$ for $e_n \in \spinbun_\Lambda$, to prove that $\Cas \otimes 1$ has compact resolvent it suffices to show that $c_\Lambda \to \infty$ for $\Lambda \to \infty$ in the lexicographic order.

Recall that the value of $c_\Lambda$ is given in \cref{cor:value-casimir}. Since $c_\Lambda$ is given as a sum of positive terms, it suffices to show that one of them goes to infinity.
Considering for instance the term with $(\lambda_1, \Lambda) = n_1 + n_2$, it is immediate to see that $c_\Lambda \to \infty$ for $\Lambda \to \infty$.
\end{proof}

\end{document}